\documentclass{amsart} 
\usepackage{mathmiosec}
\usepackage{tikz-cd} 
\renewcommand{\#}{\sharp}

\def\L{\mathcal{L}} 
\def\ra{\rightarrow} 
\def\N{\mathbb{N}}
 
\def\slee{{\em s}-Lee\ }
\DeclareMathOperator{\Aff}{Aff}

\title{Locally conformally symplectic convexity} 
\author{F.~Belgun}\thanks{The first named author was supported by a grant of Ministry of Research and Innovation, CNCS - UEFISCDI, project number PN-III-P4-ID-PCE-2016-0065, within PNCDI III.}
\author{O.~Goertsches}
\author{D.~Petrecca}
\address{(F.~Belgun) Institute of Mathematics of the
Romanian Academy - 21 Calea Grivitei Street, 010702 Bucharest,
Romania.}  \email{florin.belgun at math.uni-hamburg.de}
\address{(O.~Goertsches) Fachbereich Mathematik und Informatik der
Philipps-Universit\"at Marburg - Hans-Meerwein-Strasse 6, Marburg,
Germany.}  \email{goertsch at mathematik.uni-marburg.de}
\address{(D.~Petrecca) Institut f\"ur Differentialgeometrie, Leibniz
Universit\"at Hannover - Welfengarten 1, Hannover, Germany.  }
\email{petrecca at math.uni-hannover.de}

\begin{document}

\begin{abstract} We investigate special lcs and twisted Hamiltonian torus actions on strict lcs manifolds and characterize them geometrically in terms of the minimal presentation. We prove a convexity theorem for the corresponding twisted moment map, establishing thus an analog of the symplectic convexity theorem of Atiyah and Guillemin-Sternberg. We also prove similar results for the symplectic moment map (defined on the minimal presentation) whose image is then a convex cone. In the special case of a compact toric Vaisman manifold, we obtain a structure theorem.
\end{abstract}
\maketitle
\section{Introduction} In the 1980's, Atiyah and Guillemin--Sternberg
independently proved the celebrated convexity theorem in symplectic
geometry, see~\cite{atiyah_conv, techniques}. It states that if a
compact torus $T$ acts in a Hamiltonian fashion on a compact
symplectic manifold $(M, \omega)$ with moment map $\mu \colon M \to
\lie t^*$, then the image $\Delta = \mu(M) \subset \lie t^*$ is a
convex polytope given by the the convex hull of the points in $M$
fixed by $T$, and the fiber $\mu^{-1}(\alpha)$ of every $\alpha \in
\Delta$ is connected.

The approach followed in Atiyah's proof is based on the fact that a
generic component of the moment map is a Morse-Bott function on
$M$. An alternative technique used for the proof of the above theorem
makes use of the \emph{local-global principle}, that relates the
openness and convexity of a vector-valued map on a topological space
to its \emph{local} openness and convexity, under relatively mild
global topological assumptions (properness), see
\cite{HilgertNeebPlank} or \cite{BjoKar}. This principle is the link
between the local form of a moment map and its global convexity.



The symplectic convexity theorem has variants in a multitude of other
geometries. They range from odd-dimensional analogs of symplectic
geometry, as contact \cite{lerman} or cosymplectic geometry
\cite{BazzoniGoertsches_convexity} to generalizations of symplectic
structures, as $b^m$-symplectic \cite{convexity_bm} or generalized
$H$-twisted complex geometry~\cite{NittaGeneralized}.

We focus on compact \emph{locally conformally symplectic (lcs)}
manifolds. They are compact manifolds together with a non-degenerate
$2$-form $\omega$ that, around every point, is conformal to a symplectic
form. Their study was initiated in~\cite{Vaisman_lcs}. Often one
investigates the important special cases of \emph{locally conformally
  K\"ahler manifolds (lcK)} and \emph{Vaisman manifolds}, where the $2$-form $\omega$ is the fundamental form of a Hermitian metric, which in the Vaisman case also admits a non-zero parallel vector field.

The kind of
transformations that we consider are conformal with respect to this
structure and satisfy a twisted Hamiltonian condition, making possible
to consider a \emph{twisted moment map}. We refer to
Section~\ref{sec:prelim} for an introduction to the content
needed for this paper (see also \cite{DO} for further details).

The study of transformations on lcs manifolds was considered by Haller
and Rybicki~\cite{HallerRybicki}, where they study lcs reduction, and
recently specialized to lcK manifolds by Istrati~\cite{istrati_toric}
and to Vaisman manifolds by Pilca~\cite{Pilca}.

In this paper we prove a convexity theorem for twisted Hamiltonian
actions on locally conformally symplectic manifolds, see Theorem
\ref{thm:convmu} and Proposition \ref{prop:imagepolytope}:
\begin{thm} \label{thm:convexIntro} Consider a twisted Hamiltonian
action of Lee type of a compact torus $T$ on a compact lcs manifold
$(M, \omega, \theta)$, with twisted moment map $\mu \colon M
\rightarrow \lie t^*$. Then $\mu:M\rightarrow \mu(M)$ is an open map
with connected level sets, whose image is a convex polytope.
\end{thm}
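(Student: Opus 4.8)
The plan is to descend everything from the \emph{minimal presentation}, where the twisted moment map becomes an honest symplectic moment map and the Lee-type hypothesis forces its image to be a convex cone; the polytope of the statement then appears as a transverse slice of this cone. First I would pass to the minimal cover $\pi\colon\hat M\to M$ on which the Lee form pulls back to an exact form $\hat\theta=d\varphi$, so that $\Omega:=e^{-\varphi}\pi^*\omega$ is a genuine symplectic form and the deck group $\Gamma$ acts by homotheties $\gamma^*\Omega=e^{-c_\gamma}\Omega$. Lifting the $T$-action and using the identity $d^\theta(e^{\varphi}g)=e^{\varphi}\,dg$, the lifted twisted moment map $\hat\mu:=\pi^*\mu$ satisfies $\hat\mu=e^{\varphi}\hat\mu_0$, where $\hat\mu_0:=e^{-\varphi}\hat\mu$ is a bona fide symplectic moment map for the lifted Hamiltonian $T$-action on $(\hat M,\Omega)$, equivariant up to the homotheties, i.e.\ $\gamma^*\hat\mu_0=e^{-c_\gamma}\hat\mu_0$.

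The role of the \emph{Lee type} hypothesis is to supply a generator $\xi\in\lie t$ whose fundamental vector field realizes the Lee form up to scale, $\iota_{\xi_M}\omega=-c\,\theta$ with $c\neq0$; equivalently, the corresponding component of the twisted moment map is the nonzero constant $\langle\mu,\xi\rangle\equiv c$, which we may assume positive. Upstairs this reads $\langle\hat\mu_0,\xi\rangle=c\,e^{-\varphi}$, a strictly positive function. This single identity does two things: it confines the image of $\hat\mu_0$ to the open half-space $\{\alpha\in\lie t^*:\alpha(\xi)>0\}$, and it renders $\hat\mu_0$ \emph{proper} onto its image even though $\hat M$ is noncompact, since each level $\{\varphi=\mathrm{const}\}$ is a copy of the compact manifold $M$.

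With properness in hand I would run the symplectic convexity machinery on $(\hat M,\Omega)$ via the local-global principle \cite{HilgertNeebPlank,BjoKar}: the local models of $\hat\mu_0$ are the standard equivariant-Darboux ones, giving local openness and local convexity of the fibres, and properness then upgrades these to global openness, connected fibres, and a convex image $\hat\mu_0(\hat M)$. Because this image is convex and invariant under the nontrivial homothety $e^{-c_\gamma}$, it is a convex cone $C$; its intersection with $\{\alpha(\xi)=0\}$ is just the origin, so $C$ is pointed in the $\xi$-direction. The hard part is precisely this step on the \emph{noncompact} cover: verifying that properness of $\hat\mu_0$, which rests entirely on the Lee-type constant component, is strong enough to feed the local-global principle, and checking that the equivariant local convexity survives on $\hat M$.

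Finally I would descend. Since $\hat\mu(\hat x)=\mu(\pi(\hat x))$ lies on the ray $\mathbb{R}_{>0}\,\hat\mu_0(\hat x)$ and is pinned to height $c$ in the $\xi$-direction, the image is the transverse slice $\mu(M)=C\cap\{\alpha(\xi)=c\}$; pointedness of $C$ makes this a compact convex set, proving convexity of the image. The fibres of $\mu$ are images under $\pi$ of fibres of $\hat\mu_0$, hence connected, and openness of $\mu$ onto $\mu(M)$ follows from that of $\hat\mu_0$ onto $C$. As $M$ is compact its $T$-fixed set has finitely many components, so $C$ is a finitely generated polyhedral cone and the slice $C\cap\{\alpha(\xi)=c\}$ is a convex polytope, which is the assertion of Proposition~\ref{prop:imagepolytope} and completes Theorem~\ref{thm:convmu}.
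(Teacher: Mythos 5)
Your overall strategy --- lift to the minimal cover, use the symplectic moment map there, and slice by the Lee-type hyperplane --- is the right starting point and matches the paper's first step. But the load-bearing claim in your argument is false in general: the symplectic moment map $\hat\mu_0$ on the minimal cover is \emph{not} proper unless the lcs rank of $M$ is $1$. Your justification (``each level $\{\varphi=\mathrm{const}\}$ is a copy of the compact manifold $M$'') is incorrect: $M$ is the quotient of $\hat M$ by $\Gamma\cong\Z^k$, not a level set of $\varphi$, and the level sets of $\varphi$ are compact precisely when $k=1$ (Proposition \ref{rk1} of the paper). For rank $k\ge 2$ one has $\rho(\Gamma)$ dense in $\R$, the primitive $\varphi$ is not proper, and Lemma \ref{hat-proper} shows explicitly that $\hat\mu$ then fails to be proper; indeed $\hat\mu_0(x_n)\to 0$ along any sequence with $\varphi(x_n)\to+\infty$, so preimages of compact sets near the boundary of the cone escape to infinity. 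Consequently the local--global principle cannot be applied on $\hat M$ as you propose. The paper avoids this entirely: it uses the lift only to obtain the \emph{local} normal form (local openness and local convexity of $\hat\mu$), transports these local properties down to $\mu$ via the central projection $\Psi(\alpha)=\alpha/\alpha(Y)$ onto the affine hyperplane $\{\alpha(Y)=1\}$ (Lemmas \ref{lemmaPsi} and \ref{lemma:loccvxmu}), and then applies the local--global principle to $\mu$ itself on the compact manifold $M$, where properness is automatic. The cone statements for $\hat\mu$ that you want to prove first are established in the paper only \emph{afterwards}, and only under extra hypotheses (rank one, or nowhere-vanishing moment map and Lee form; Theorems \ref{convx-hat} and \ref{conemoment}).

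A secondary gap is the polytope step. You locate the vertices at images of the $T$-fixed set, but on a strict lcs manifold with a Lee-type action the $T$-fixed set is typically empty (at a fixed point the {\em s}-Lee field would vanish; e.g.\ the Hopf manifold has none), so your argument would produce no extremal points at all. The paper's Proposition \ref{prop:imagepolytope} instead identifies the extremal points inside the set $C$ of points whose $T$-orbit coincides with the $V$-orbit, which is a finite union of components of $M^T$ and of fixed sets $M^H$ for codimension-one isotropy groups $H$ with $V\notin\lie h$, and then shows each such component contributes at most two extremal points because its $\hat\mu$-image lies on a line and $\Psi$ maps segments to segments.
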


While the condition of being twisted Hamiltonian is invariant under
conformal changes, the convexity of the moment image is not, see
Example~\ref{ex:momentimageconfchange}. This is reflected in our
theorem by the fact that the Lee type condition, i.e. the condition
that the anti-Lee (or {\em s}-Lee) vector field is a fundamental vector
field of the action, is not conformally invariant either. We consider, however, also the {\em symplectic moment map} (see below) which is a conformal invariant.

\medskip

The paper is organized as follows: In Section \ref{sec:prelim} we review the necessary notions of lcs geometry, including the minimal presentation $\hat M$ of an lcs manifold $M$, and the Vaisman-Sasaki correspondence, and also give a topological characterization of the rank one case (Proposition \ref{rk1}, generalized to Proposition \ref{Fprop}).

In Section \ref{sec:3} we introduce the lcs, special lcs and twisted Hamiltonian vector fields. The difference between the latter two kinds is measured by the {\em Morse-Novikov cohomology}, which is, however, difficult to compute. We provide criteria characterizing special lcs vector fields which are twisted Hamiltonian (Proposition \ref{affine}, Proposition \ref{dexact} and their corollaries).

In Section \ref{sec:4} we discuss Lie group actions on lcs manifolds and show three essential results for the sequel: in Proposition \ref{closure-sLee} we show that if the Lee vector field preserves infinitesimally some metric, then it generates a special lcs action of a torus, in Lemma \ref{liftG} we show that a special lcs action of $G$ on $M$ lifts to a symplectic action of $G$ on the minimal covering $\hat M$, and, in Propositions \ref{equivmmap} and \ref{Tequiv}, we show that a twisted Hamiltonian action of Lee type of a torus has an equivariant twisted moment map.

In Section \ref{sec:5}, we prove our main results (Theorem \ref{thm:convexIntro}) and, moreover, discuss another object defined by a twisted Hamiltonian action: the {\em symplectic moment map}, corresponding to the symplectic (in fact, Hamiltonian) action of $G$ on the minimal covering $\hat M$. The advantage of $\hat \mu$ is that it does not depend on any choice of an lcs form $\omega$, but only on its conformal class (thus on the lcs structure).

Investigating the convexity of $\hat\mu$ directly is, however, more difficult, since this map is usually not proper (see Lemma \ref{hat-proper}) (recall that properness is an essential ingredient for applying the local-global principle).

However, we can prove that properness, thus also the convexity of the symplectic moment map holds for torus actions on lcs manifolds of rank one (Theorem \ref{convx-hat}), under some extra assumption.

For higher rank, we were able to prove, also under some extra assumptions, that the image of the symplectic moment map is a cone over the image of the twisted moment map (Theorem \ref{conemoment}), and is, therefore, convex.


In Section~\ref{sec:momentmapvaisman} we specialize to the case of a
Vaisman manifold and relate the moment image to that of the associated
Sasakian manifold and to the one of the \emph{minimal K\"ahler
covering} (Proposition~\ref{prop:momentimageSasakian}).

In Section~\ref{subsec:toric}, we provide a structure theorem
(Theorem~\ref{thm:prod}) for toric Vaisman manifolds. Such a manifold has been already proved in ~\cite{MaMoPi} to be a mapping torus of a toric Sasakian manifold. Using toric geometry on K\"ahler cones, we are able to prove, moreover, that the
transformation defining the mapping torus actually lies in the acting
torus. This leads to the consequence that every toric Vaisman manifold
is diffeomorphic, as a smooth manifold, to the product of a toric
Sasakian manifold with a circle. 

Finally, we provide in Section~\ref{sec:examples} examples of twisted
Hamiltonian actions on lcs, lcK and Vaisman manifolds and compute their
moment images.



\subsection*{Acknowledgments} The authors would like to thank Liviu
Ornea, Alexandra Otiman and Mihaela Pilca for the fruitful discussions
and their useful remarks.

\section{Preliminaries on lcs, lcK and Vaisman manifolds}
\label{sec:prelim}

Let $(M, \omega)$ be an almost symplectic manifold of real dimension
greater than 2, where $\omega$ is a non-degenerate 2-form. Often
$\omega:=g(J\cdot,\cdot)$ will be the fundamental form of an (almost)
Hermitian metric $g$ on $(M,J)$, where $J:TM\rightarrow TM$ is an
(almost) complex structure on $M$. We will usually consider the
complex case, when $J$ is integrable.

If every point of $M$ admits a neighborhood $U$ and a smooth function
$f_U \colon U \to \R$ such that the two-form $e^{-f_U} \omega|_U$ is
closed, we call $(M, \omega)$ a \emph{locally conformally symplectic
manifold (lcs)}. If $\omega$ is the fundamental (or K\"ahler) form of
a Hermitian manifold $(M, g, J)$, then we call it  a \emph{locally
conformally K\"ahler manifold (lcK)}. 

From the definition, it follows that the local $1$-forms $df_U$ glue
together to a global $1$-form $\theta$, called the \emph{Lee form},
satisfying on $M$
\begin{equation} \label{eq:lcs} d \omega = \theta \wedge \omega.
\end{equation}

Thus, the 2-form $\omega$ is closed with respect to the so-called
\emph{twisted differential} $d_\theta: = d - \theta \wedge \cdot$. 
Because $\theta$ is closed, we have $d_\theta^2=0$, thus the twisted
differential defines a cohomology (called the {\em Morse-Novikov}
cohomology) $H^*(M,d_\theta)$.
 
In general, the local functions $f_U$ do not glue together to a global
function $f$ on~$M$. When this happens, or equivalently if $\theta$ is
exact, the structure is called \emph{globally conformally symplectic
(gcs)}, resp. \emph{globally conformally K\"ahler (gcK)}, as this
means that $e^{-f} \omega$ is a symplectic form, resp. $e^{-f} g$ is a
K\"ahler metric on $M$. If a connected manifold is gcs, resp. gcK,
then all symplectic, resp. K\"ahler structures in the given conformal
class are homothetic to each other.
  
 We call an lcs structure \emph{strict} if it is not gcs, and an lcK
structure \emph{strict} if it is not gcK. Note that the both the lcs
and the lcK condition are invariant under conformal changes
(multiplication by a positive function) of $\omega$, resp. $g$. In
this paper, unless explicitly mentioned, all the considered lcs, resp
lcK structures will be strict.

 According to the classical result of P. Gauduchon \cite{gaud}, one
can always find a metric (called {\em Gauduchon metric}) in a
conformal lcK class, such that the corresponding Lee form is harmonic
(w.r.t. that metric itself). Such Gauduchon metrics are homothetic to
each other, hence a connected group that preserves the conformal and
the complex structure of an lcK manifold $(M,J,\omega)$ acts by
isometries of any Gauduchon metric (see also \cite[Proposition
3.8]{MaMoPi}). 
 
If, for a strict lcK manifold $M$, the form $\theta$ is parallel with
respect to the metric $g$, then  $M$ is called a \emph{Vaisman
manifold}. For simplicity, we will make the convention that the Lee
form of a Vaisman manifold has unit length.  \medskip

The \emph{anti-Lee} vector field of an lcs manifold
$(M,\omega,\theta)$ is the unique vector field $V$ such that 
\begin{equation}\label{aLlcs}  \omega(V,\cdot)=-\theta.
\end{equation} In the lcK setting it is $V=J\theta^\#$, where
$\theta^\#$, the {\emph{Lee vector field}}, is dual to the Lee form
$\theta$ with respect to the metric $g$. As there is no Lee vector
field in the lcs setting, we propose to call $V$ the {\em \slee vector
field} (the prefix ``s'' coming from {\em symplectic}) in the most
general setting.

\subsection{Presentations of lcs and lcK
manifolds} \label{sec:presentations} We characterize now lcs,
resp. lcK structures by symplectic, resp. K\"ahler structures on a covering.
\begin{defi}\label{present}
A {\em presentation} $(M_0, \omega_0, \Gamma, \rho, \lambda)$ 
of an lcs structure on $M$ consists of a symplectic manifold $(M_0,
\omega_0)$,  a discrete group $\Gamma$ acting freely and properly on $M_0$ such that $M=M_0/\Gamma$ and
\begin{equation}\label{eq:pres-hom}\gamma^*\omega_0=e^{-\rho(\gamma)}\omega_0\end{equation} 
for all $\gamma\in \Gamma$, where $\rho:\Gamma\ra\R$ is a group homomorphism, and
$\lambda:M_0\ra\R$ is a $\rho$-equivariant smooth function, i.e.
\begin{equation}\label{eq:pres}
\lambda\circ\gamma-\lambda=\rho(\gamma)
\end{equation}
for all $\gamma\in \Gamma$.
\end{defi}
One says that \eqref{eq:pres} means that $\lambda$ is $\rho$-equivariant,
and \eqref{eq:pres-hom} that $\Gamma$ acts on
$M_0$ by symplectic homotheties. 
\smallskip

The correspondence between the lcs structure $(M,\omega)$ and the
above presentation is the following: first, $M= M_0/\Gamma$, and $p:M_0\ra M$ is a Galois covering. Then,
\begin{equation}\label{pres-lcs}
  p^*\omega=e^\lambda\omega_0,\ p^*\theta=d\lambda.\end{equation}
More precisely, given a presentation as above, the right hand sides of
the above equalities are $\Gamma$-invariant forms on $M_0$ and define
thus $\omega$ and $\theta$ on $M=M_0/\Gamma$.
\smallskip

Conversely, given an lcs structure on $M$, and a Galois covering
$p:M_0\ra M$ with group $\Gamma$ such that $p^*\theta$ is exact, we define $\lambda$ as
a primitive of $p^*\theta$. Then the group homomorphism $\rho$ is defined by the relation \eqref{eq:pres} (note that the right hand side has to be a
constant depending on $\gamma$). Setting
\begin{equation}\label{eq:omega0}
\omega_0 := e^{-\lambda} p^*\omega,
\end{equation}
we clearly get a symplectic form $\omega_0$ and the equation
\eqref{eq:pres-hom}. \smallskip

If, moreover, $M$ has an lcK structure with complex structure $J$, then
we consider on $M_0$ the lifted complex
structure and we obtain that $(\omega_0, J)$ is a K\"ahler structure
and $\Gamma$ is a discrete group of holomorphic
homotheties. Conversely, if a presentation $(M_0,\omega_0,\Gamma,\rho,\lambda)$
as above admits a $\Gamma$-invariant complex structure $J$ on
$M_0$, then the quotient space $(M,J)$ inherits an lcK metric
$g:=\omega(\cdot,J\cdot)$. Note that for the K\"ahler metric
$g_0:=\omega_0(\cdot,J\cdot)$, $\Gamma$ acts, in particular, by (metric)
homotheties. 


\begin{rem}  Note that, if $M$ is a {\em strict} lcs or lcK
manifold, then the group homomorphism $\rho$ is non-trivial, thus
$\Gamma$ must be infinite.\end{rem} 


\begin{example}If $(M, \omega, \theta)$ is an lcs (resp.\ lcK)
manifold, let $\pi \colon \tilde M \to M$ be the universal covering
$M$ with the pull-back structure. Let $\lambda$ be a primitive of
$\pi^* \theta$. The fundamental group of $M$ leaves $\pi^* \theta = d
\lambda$ invariant, so there is a homomorphism $\tilde \rho \colon
\pi_1(M) \to \R$ that maps $\gamma \in \pi_1(M)$ to the number
$c_\gamma$ such that $\gamma \circ \lambda = \lambda + c_\gamma$.
\end{example}

\begin{example}\label{homcov} Let $M$ be a connected manifold. Consider 
\[
\Gamma:=\pi_1(M)/[\pi_1(M),\pi_1(M)]\simeq H_1(M,\Z)
\] to be quotient of of the fundamental group by its commutator $[\pi_1(M),\pi_1(M)]$, the result being the integer homology group. Then $\Gamma$ is the (Abelian) group of a Galois covering $\bar p:\bar M\ra M$, called the {\em homological} covering $\bar M:=\tilde M/[\pi_1(M),\pi_1(M)]$. As above, if $(M,\omega,\theta)$ is lcs, then we get a presentation on the homological covering $\bar M$ because all the objects that we define on the universal covering are $[\pi_1(M),\pi_1(M)]$-invariant. Note that, like the universal covering, the homological covering has the property that the pull-back through $\bar p$ of any closed form on $M$ is exact (on $\bar M$).\end{example}

For us, the main example of a presentation, besides the universal and the homological 
covering, is the {\em minimal covering} $\hat M:=\tilde M/\Gamma_0$,
where $\Gamma_0:=\ker\tilde\rho$ is the (normal) subgroup of
$\pi_1(M)$ that consists of the elements that act on $(\tilde
M,\omega_0)$ by holomorphic isometries (in the lcs case only symplectomorphisms). The corresponding group $\hat\Gamma:=\pi_1(M)/\Gamma_0$ is then a free Abelian $\Z^k$, because
the induced group homomorphism $\hat\rho:\hat\Gamma\ra \R$ is
injective. The number $k$ is called the {\em lcK rank} (or the {\em
  lcs rank}) of $(M,\omega)$ and it is a number between $1$ and
$b_1(M)$, see also \cite[Prop.~2.12]{GOOP}.
\begin{rem}\label{lcsrk} The lcs (lcK) rank of $(M,\omega)$ is the rank
  of the free abelian subgroup  $\rho(\Gamma)\subset \R$, which is the
  same for all presentations $(M_0,\omega_0,\Gamma,\rho,\lambda)$.\end{rem}
\begin{rem} The minimal covering, and thus the lcK or lcs rank defined
above depend only on the closed, non-exact 1-form $\theta$ (more
precisely on its cohomology class in $H^1(M,\R)$): $M_0$ is the
quotient of $\tilde M$ by the subgroup of $\pi_1(M)$ that preserves a
primitive $\lambda$ of $\theta$ (hence any).\end{rem}

The case where the lcs (lcK) rank is $1$ is
  special from a topological viewpoint.
  \begin{lemma}\label{circl} If $(M,\omega)$ has lcs rank $1$, then
  there is a map $\bar\lambda:M\ra S^1$ such that, if we denote by $q$
  the projection from $\R$ to $\R/\rho(\Gamma)\simeq S^1$, and by
  $\hat p:\hat M\ra M$ the projection of the minimal covering, we have
$$q\circ \lambda=\bar\lambda\circ\hat p.$$
\end{lemma}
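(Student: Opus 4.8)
The plan is to show directly that the smooth map $q\circ\lambda\colon\hat M\to S^1$ is constant on the fibers of the covering $\hat p\colon\hat M\to M$, so that it descends to the desired $\bar\lambda$. First I would record why the target is genuinely a circle: by Remark~\ref{lcsrk} the rank-one hypothesis means that $\rho(\Gamma)=\hat\rho(\hat\Gamma)$ is a free abelian subgroup of $\R$ of rank $1$, i.e.\ an infinite cyclic group $c\Z$ for some $c>0$. Such a subgroup is discrete in $\R$, so the quotient projection $q\colon\R\to\R/\rho(\Gamma)$ is a smooth covering of the circle $S^1\cong\R/c\Z$, and in particular $\ker q=\rho(\Gamma)$.

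The key step is the invariance computation. The covering $\hat p$ is Galois with deck group $\hat\Gamma=\pi_1(M)/\Gamma_0$ acting on the minimal covering $\hat M$, and $\lambda$ is the primitive of $\hat p^*\theta$ from the minimal presentation. For $\gamma\in\hat\Gamma$ the $\rho$-equivariance~\eqref{eq:pres} reads $\lambda\circ\gamma=\lambda+\hat\rho(\gamma)$. Since $\hat\rho(\gamma)\in\rho(\Gamma)=\ker q$, applying $q$ gives
\[
q\circ\lambda\circ\gamma=q\circ\lambda\qquad\text{for all }\gamma\in\hat\Gamma.
\]
Thus $q\circ\lambda$ is $\hat\Gamma$-invariant, i.e.\ it takes the same value on every fiber of $\hat p$.

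It remains to descend. Because $\hat p$ is a Galois covering with deck group $\hat\Gamma$ and $q\circ\lambda$ is a $\hat\Gamma$-invariant smooth map, it factors uniquely as $q\circ\lambda=\bar\lambda\circ\hat p$ for a map $\bar\lambda\colon M\to S^1$; smoothness of $\bar\lambda$ follows from that of $q\circ\lambda$ since $\hat p$ is a local diffeomorphism. This $\bar\lambda$ is exactly the map required by the statement.

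I expect no serious obstacle here: the entire content sits in the rank-one hypothesis, whose only role is to guarantee that $\rho(\Gamma)$ is a \emph{discrete} (cyclic) subgroup, so that $\R/\rho(\Gamma)$ is an honest circle rather than a dense, non-Hausdorff quotient; for higher rank the analogous construction would at best land in a torus, or fail to give a manifold target at all. The single point worth verifying with care is therefore the discreteness of $\rho(\Gamma)$, together with the routine check that the set-theoretic descent through the Galois covering is smooth.
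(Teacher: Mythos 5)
Your proof is correct and follows essentially the same route as the paper's (one-line) argument: rank one gives $\rho(\Gamma)=c\Z$ discrete, and the $\rho$-equivariance of $\lambda$ makes $q\circ\lambda$ invariant under the deck group, hence it descends. You have simply written out in full the details the paper leaves implicit.
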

\begin{proof}From Remark \ref{lcsrk},  $\Gamma\simeq\Z$. The lemma follows now from the $\rho$-equivariance of $\lambda$.\end{proof}
As a consequence, we prove the following proposition.
\begin{prop}\label{rk1} Let $(M,\omega)$ be an lcs manifold.
The primitive $\lambda:M_0\ra\R$ of the pull-back of the Lee form
  $\theta$ to a presentation $M_0$ is a proper map if and only  if
  the lcs rank is $1$, $M$ is compact and $M_0$ is (a finite covering
  of) the minimal presentation of $M$. 
\end{prop}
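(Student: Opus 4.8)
The plan is to translate properness of $\lambda$ into purely algebraic finiteness conditions on the homomorphism $\rho\colon\Gamma\ra\R$ and on $\Gamma$ itself, exploiting that $\Gamma$ acts freely and properly on $M_0$ with $\lambda\circ\gamma=\lambda+\rho(\gamma)$. The guiding principle I would establish is the dictionary
\[
\lambda \text{ proper}\quad\Longleftrightarrow\quad \rho(\Gamma)\subset\R \text{ is discrete, } \ker\rho \text{ is finite, and } M \text{ is compact,}
\]
and then match these three conditions with the statement via Remark \ref{lcsrk} and the description of the minimal covering $\hat M=\tilde M/\Gamma_0$.

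For the forward implication I would fix a basepoint $x_0\in M_0$ and record that $\gamma x_0$ lies in $\lambda^{-1}(I)$, for a bounded interval $I$, exactly when $\rho(\gamma)\in I-\lambda(x_0)$. Since the action is free and proper, the orbit $\Gamma x_0$ is a closed discrete subset of $M_0$, so its intersection with the (compact, if $\lambda$ is proper) set $\lambda^{-1}(I)$ is a closed discrete subset of a compact space, hence finite. Thus properness forces $\{\gamma\in\Gamma:\rho(\gamma)\in J\}$ to be finite for every bounded $J\subset\R$, which is precisely the conjunction of discreteness of $\rho(\Gamma)$ and finiteness of $\ker\rho$. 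As $M$ is strict, $\theta$ is non-exact, so $\rho(\Gamma)\neq 0$; a nontrivial discrete subgroup of $\R$ is infinite cyclic, so the rank is exactly $1$, say $\rho(\Gamma)=c\Z$ with $c>0$, and I pick $\gamma_0$ with $\rho(\gamma_0)=c$. Finiteness of $\ker\rho$ is exactly the statement that the covering $M_0\ra\hat M$ — which exists because $p^*\theta=d\lambda$ is exact, and whose deck group is $\ker\rho$ — is finite, i.e.\ that $M_0$ is a finite covering of the minimal presentation. Compactness of $M$ then comes for free: any point of $M_0$ can be moved into the slab $\lambda^{-1}([0,c])$ by a power of $\gamma_0$, so $p(\lambda^{-1}([0,c]))=M$, and $\lambda^{-1}([0,c])$ is compact by properness.

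For the converse, since a finite covering map is proper and the primitive on $M_0$ is the composite of such a map with the primitive on $\hat M$, I may assume $M_0=\hat M$. Here I would invoke Lemma \ref{circl}: with $q\colon\R\ra S^1=\R/c\Z$ and $\bar\lambda\colon M\ra S^1$ satisfying $q\circ\lambda=\bar\lambda\circ\hat p$, the $\Z$-covering $\hat p$ is identified with the pullback of the universal covering $q$ along $\bar\lambda$ (the value of $\bar\lambda_*\colon\pi_1(M)\ra\Z$ on a loop equals $\rho$ of the corresponding deck transformation divided by $c$, so $\bar\lambda_*$ has kernel $\Gamma_0$ and is onto). Under this identification $\hat M=\{(x,t)\in M\times\R:\bar\lambda(x)=q(t)\}$ with $\lambda(x,t)=t$, whence $\lambda^{-1}([a,b])=\hat M\cap(M\times[a,b])$ is a closed subset of the compact space $M\times[a,b]$, hence compact, so $\lambda$ is proper.

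The main obstacle is precisely this passage, in both directions, between the analytic notion of properness and the algebraic/topological finiteness data; the free, proper action of $\Gamma$ is what makes the dictionary work. If one prefers to avoid the classifying-map identification in the converse, the compactness of $\lambda^{-1}([a,b])$ can instead be obtained directly by covering the compact $M$ with finitely many relatively compact, evenly covered open sets and observing that over each of them only finitely many of the $\Z$-indexed sheets meet $\lambda^{-1}([a,b])$, since $\lambda$ shifts by $c$ from one sheet to the next.
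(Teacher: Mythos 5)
Your proof is correct and follows essentially the same route as the paper's: the forward direction extracts discreteness of $\rho(\Gamma)$, finiteness of $\ker\rho$, and compactness of $M$ from properness of $\lambda$ via the properness of the $\Gamma$-action (the paper phrases this with Cauchy sequences rather than closed discrete orbits, but the mechanism is identical), and the converse uses the circle-valued map $\bar\lambda$ of Lemma \ref{circl} to exhibit $\lambda^{-1}$ of a bounded interval as a closed subset of a compact set. Your explicit reduction of the converse to the minimal covering via properness of finite coverings is a small point the paper leaves implicit, but otherwise the two arguments coincide.
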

\begin{proof} Assume that $\lambda:M_0\ra\R$ is proper. We first show that the lcs rank of $M$ is $1$, i.e., by Remark \ref{lcsrk}, that $\rho(\Gamma)$ is a cyclic subgroup of $\R$. By \eqref{eq:pres} this is true if $\lambda(\Gamma\cdot x)\subset \R$ is closed for some arbitrary fixed $x\in M_0$. So suppose there is a sequence $\gamma_n\cdot x$, with $\gamma_n\in \Gamma$, such that $\lambda(\gamma_n\cdot x)$ is a Cauchy sequence. Let $A$ be the closure of
$\{\lambda(\gamma_n\cdot x)\ |\ n\in\N\}$ in $\R$. Then $A$ is compact, and 
thus, using the properness of $\lambda$, the preimage $\lambda^{-1}(A)\subset M_0$
is compact as well. 

The properness of the free action of the discrete group $\Gamma$ on $M_0$ implies that the sequence $(\gamma_n)_{n\in \N}$ contains only finitely many elements of $\Gamma$. Thus the set $\{\gamma_n\cdot x\ |\ n\in\N\}$ is finite and so is $A$. This implies that $\lambda(\Gamma\cdot x)\subset\R$ is closed, and hence the lcs rank of $M$ is $1$.

Taking the above sequence $(\gamma_n)_{n\in \N}$ in the normal subgroup
$\Gamma_0:=\ker\rho\subset \Gamma$ we obtain using \eqref{eq:pres} and the properness of $\lambda$ that $\Gamma_0$ is finite, and thus $M_0$ is a finite covering of the minimal covering $\hat M\simeq M_0/\Gamma_0$. 
 
To prove compactness of $M$, let $(x_n)_{n\in \N}$ be a sequence in
$M$ and let $(y_n)_{n\in\N}$ be a sequence in $M_0$ such that
$p(y_n)=x_n$ for all $n\in\N$. Let $a>0$ be a generator of
$\rho(\Gamma)\subset\R$. By shifting
the points $y_n\in M_0$ by elements of $\Gamma$, we can suppose that $\lambda(y_n)$ is contained in the interval $[0,a]$ for all $n \in \N$. By the properness of $\lambda$, $(y_n)$ is a sequence in the compact set $\lambda^{-1}([0,a])$. Therefore
$(y_n)_{n\in\N}$ and thus also $(x_n)_{n\in\N}$ admit converging
subsequences.
\smallskip

Let us now show the converse direction. We use the notation from Lemma \ref{circl}. 
For a compact interval $I$ of length less than $a$, the positive generator of the cyclic group $\rho(\Gamma)\subset\R$, $\lambda^{-1}(I)$ is homeomorphically
projected by $\hat p$ onto $\bar\lambda^{-1}(q(I))$. This latter set
is closed in $M$, thus compact. As any compact set in $\R$ can be covered by finitely many sets of type $I$, we conclude that $\lambda:M_0\ra\R$ is proper. 
\end{proof}
We formulate here a very general topological fact that generalizes both Lemma \ref{circl} and Proposition  \ref{rk1}.
\begin{prop}\label{Fprop} Let $\bar p:\bar M\ra M$ be the homological
covering of a compact connected manifold $M$.

Let
$\alpha\in C^\infty(\Lambda^1M\otimes H_1(M,\R))$ be a closed form
with values in $H_1(M,\R)\simeq H^1(M,\R)^*$, representing the
tautological class $Id\in H^1(M,\R)\otimes H_1(M,\R)$. Then the
pull-back of $\alpha$ to $\bar M$ is exact and let 
$\bar F:\bar M\ra H_1(M,\R)$ be a primitive of $\bar p^*\alpha$.


Then there exists a smooth map $F:M\ra\T$,
where $\T:=H_1(M,\R)/\rho(H_1(M,\Z))$ is a torus (here
$\rho:H_1(M,\Z)\ra H_1(M,\R)$ is the canonical map; denote by $\pi:H_1(M,\R)\ra
\T$ the projection), such that $\pi\circ \bar F=F\circ\bar p$. In
particular $\bar F$ is proper.  
\end{prop}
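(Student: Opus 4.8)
The plan is to mimic the construction of $\bar\lambda$ in Lemma \ref{circl}, but componentwise and intrinsically, using the tautological structure of $\alpha$. First I would analyze how the primitive $\bar F$ transforms under deck transformations. Since $\bar p:\bar M\ra M$ is the homological covering, its deck group is $\Gamma=H_1(M,\Z)$ (more precisely its image under the canonical map $\rho$), and $\bar p^*\alpha$ is $\Gamma$-invariant and exact, so for each $\gamma\in\Gamma$ the difference $\bar F\circ\gamma-\bar F$ is a closed, hence locally constant, $H_1(M,\R)$-valued function; as $\bar M$ is connected it is a constant vector $c(\gamma)\in H_1(M,\R)$. The map $\gamma\mapsto c(\gamma)$ is a group homomorphism $\Gamma\ra H_1(M,\R)$. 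The crucial computation, which I expect to be the technical heart of the argument, is that this homomorphism is exactly the canonical map $\rho:H_1(M,\Z)\ra H_1(M,\R)$. This is where the hypothesis that $\alpha$ represents the tautological class $Id\in H^1(M,\R)\otimes H_1(M,\R)$ enters: pairing $\alpha$ against a loop representing $\gamma$ and computing the period recovers $[\gamma]\in H_1(M,\R)$ by the very definition of the tautological (identity) class. This identification is the step I would be most careful about, since it requires tracking the duality $H_1(M,\R)\simeq H^1(M,\R)^*$ and the universal-coefficient identification correctly.

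Once $c=\rho$ is established, the construction of $F$ is formal. Because $\bar F\circ\gamma=\bar F+\rho(\gamma)$ for all $\gamma\in\Gamma$, the composition $\pi\circ\bar F:\bar M\ra\T$, with $\pi:H_1(M,\R)\ra\T=H_1(M,\R)/\rho(H_1(M,\Z))$ the projection, satisfies
\[
\pi\circ\bar F\circ\gamma=\pi(\bar F+\rho(\gamma))=\pi\circ\bar F,
\]
so $\pi\circ\bar F$ is $\Gamma$-invariant and descends to a well-defined smooth map $F:M\ra\T$ with $\pi\circ\bar F=F\circ\bar p$. Smoothness of $F$ is inherited from that of $\bar F$ and $\pi$ together with the fact that $\bar p$ is a covering, so one may work in local charts where $\bar p$ is a diffeomorphism.

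It remains to deduce properness of $\bar F$. Here I would argue exactly as in the converse direction of Proposition \ref{rk1}. The map $\pi$ restricts to a diffeomorphism from any sufficiently small open box $U\subset H_1(M,\R)$ onto its image, and over such a box $\bar p$ restricts to a homeomorphism $\bar F^{-1}(U)\xrightarrow{\ \sim\ }F^{-1}(\pi(U))$; since $M$ is compact, $F^{-1}(\pi(U))$ is closed in $M$ and hence compact, so $\bar F^{-1}(\overline U)$ is compact. Finally, any compact subset $K\subset H_1(M,\R)$ is covered by finitely many such boxes, whence $\bar F^{-1}(K)$ is a finite union of compact sets and therefore compact. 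This establishes that $\bar F$ is proper and completes the proof.
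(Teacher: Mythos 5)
Your proposal is correct and follows essentially the same route as the paper: identify the period homomorphism $\gamma\mapsto \bar F\circ\gamma-\bar F$ with the canonical map $\rho$ via the tautological class, descend $\pi\circ\bar F$ to $F$, and deduce properness by covering compact sets with small boxes (the paper does the first step on the universal cover and leaves the properness argument implicit, which you usefully spell out). One tiny caveat: when $H_1(M,\Z)$ has torsion, $\bar p$ restricted to $\bar F^{-1}(U)$ is a finite covering of $F^{-1}(\pi(U))$ rather than a homeomorphism, but compactness of $\bar F^{-1}(\overline U)$ still follows.
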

\begin{proof} The claims are straightforward, once we prove that the
objects $\bar F, \rho, F$, etc. are well-defined.  The simplest way is
to remark that, if we define the primitive $\tilde F:\tilde M\ra H_1(M,\R)$ of the pull-back to the universal cover $\tilde M$ of the chosen
"tautological" $1$-form $\alpha\in\C^\infty(\Lambda^1M\otimes
H_1(M,\R))$, then $\tilde F\circ\gamma$ is another primitive of the
same form as $\gamma^*\tilde p^*\alpha=\tilde p^*\alpha$. Recall that such a primitive $\tilde F$ can be defined by choosing $x_0\in\tilde M$ and setting
$$\tilde F(y):=\int_{x_0}^y\tilde p^*\alpha,$$
where the path from $x_0$ to $y$ in $\tilde M$ can be chosen arbitrarily, as all such paths are homotopic.

Thus $\tilde
F\circ\gamma-\tilde F=\rho(\gamma)$, for some constant in
$\rho(\gamma)\in H_1(M,\R)$,
 and we see, using the definition of $\tilde F$,
that $\rho(\gamma)$ is the integral of $\alpha$ along a loop in
$M$ representing $\gamma\in\pi_1(M)$, and thus $\rho:\pi_1(M)\to H_1(M,\R)$ is a group
homomorphism.

To see that $\rho$ induces the canonical homomorphism $H_1(M,\Z)\to H_1(M,\R)\simeq H^1(M,\R)^*$, note that, for any cohomology class $[\beta]\in H^1(M,\R)$ (de Rham cohomology) and homology class $[\gamma]\in H_1(M,\Z)$, we have (using angular brackets for the dual pairing):
$$\langle \rho(\gamma),[\beta]\rangle=\int_\gamma \langle \alpha,[\beta]\rangle=\int_\gamma [\beta]=\langle [\gamma],[\beta]\rangle.$$
Also the map $F:M\ra \T$ is induced by $\bar F$ and
the action of $\Gamma=H_1(M,\Z)$ on $\bar M$, resp.\ $H_1(M,\R)$.
\end{proof}
\begin{rem} Lemma \ref{circl} and Proposition \ref{rk1} follow from the above result because the minimal cover is a quotient of $\bar M$. The circle appearing in Lemma \ref{circl} is then a subgroup of the torus $T$ above.\end{rem}

\subsection{The Vaisman-Sasaki correspondence}
We recall (see, for example, \cite[Theorem 3]{belgun_surfaces} and \cite[Prop.~4.2]{GOOP})
that an lcK manifold is Vaisman if and only if it admits a
presentation of the form $(C(S), \Gamma)$ where $S$ is a {\em
Sasakian} manifold and $\Gamma$ is a discrete subgroup of holomorphic
homotheties of the {\em K\"ahler metric cone} over
$S$, $C(S):=\R\times S$ (see the description of the metric below) acting freely and
properly, such that the action commutes with 
the flow of $\de_t$ (here $t$ is the standard coordinate on $\R$,
extended to a smooth function on $C(S)=\R\times S$ and the function
$\lambda$ of the presentation is just $t$).  In this  case, all
presentations are of this form. 

Recall that a {\em Sasakian} metric is an odd-dimensional analog of
a K\"ahler metric, more precisely it is a contact metric manifold
$(S,g_S)$, with Reeb field $R$ (the metric dual of the contact form)
which is Killing and has unit length, and its covariant derivative defines a
$CR$ structure $J$ on $R^\perp$. We refer to \cite{DO} for
details. Here we will mainly use the fact that the {\em metric cone}
$C(S):=(\R\times S,e^t(dt^2+g_S))$ is K\"ahler (one extends $J$ by
$J\partial_t:=R$ to an (integrable) almost complex structure on
$C(S)$). We infer that, for a Vaisman metric, its pull-back to any
presentation is the product metric on $C(S)=\R\times S$, the
(conformally related) K\"ahler metric is $J$ inserted into the conic
metric, and the pull-back of the Lee form is $dt$.



This Sasakian manifold $S$  covers then every leaf $W$ of the Sasakian
foliation of $M$ (obtained by integrating the distribution $\ker\theta$), the
covering group of this covering being the normal subgroup
$\ker\rho\subset \Gamma$. Therefore, if $C(S)$ is the minimal covering
of $(M,\omega,\theta)$, then $S\simeq W$. Such an $S$ is then called
the {\em associated Sasakian manifold} to $M$.  

Conversely, if $W$ is a Sasakian manifold, the product
manifold $M = W \times S^1$ of $W$ with a circle (or, more generally,
the mapping torus of a Sasakian automorphism of $S$, see Theorem \ref{thm:prod}) admits a natural
Vaisman structure (of lcK rank $1$).

In \cite{MaMoPi}, the authors prove that if the lcK rank of a compact
Vaisman manifold $M$ is one, then $M$ is a mapping torus over its
associated Sasakian $W$ and that, hence, $W$ is compact. This result, as well as the  converse
of this statement can be retrieved as a direct consequence of Proposition \ref{rk1}, because the level sets of $\lambda$ are all diffeomorphic to $S$:
\begin{corol} \label{prop:cptSasaki} A Vaisman manifold $M$ has a
compact associated Sasakian manifold if and only if the lcK rank of $M$ is
one. 
\end{corol}

\section{Twisted Hamiltonian vector fields on lcs manifolds}\label{sec:3}

In this section we describe the infinitesimal automorphisms of an lcs
manifold, and in the next one the group actions preserving the lcs
structure. We will focus on the general lcs case; in the lcK setting, we require that {\em lcK} vector fields and transformations are {\em lcs} and {\em holomorphic}. We will briefly comment on the lcK case in Subsection \ref{lcK}.

The lcs transformations of an lcs manifold
$(M,\omega)$ are the
diffeomorphisms $\phi:M\ra M$ such that $\phi^*\omega =F\omega$, for
$F$ a function on $M$.  

\begin{prop}\label{aX} Let $X$ be a vector field on an lcs  manifold
$(M,\omega)$ whose flow  consists of lcs  transformations. Let
$(M_0,\omega_0)$ be a  presentation of   $(M,\omega)$ and lift $X$ to a vector field on $M_0$, denoted again by $X$. Then
$$\L_X\omega_0=a(X)\omega_0,$$
for some $a(X)\in\R$. \end{prop}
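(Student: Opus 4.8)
The plan is to transfer the infinitesimal lcs condition from $M$ to the symplectic cover $M_0$, where it will say that $\L_X\omega_0$ is a \emph{function} multiple of $\omega_0$, and then to upgrade that function to a constant using that $\omega_0$ is closed and non-degenerate in dimension $\geq 4$. Since the flow $\phi_t$ of $X$ consists of lcs transformations, $\phi_t^*\omega=F_t\,\omega$ for smooth functions $F_t$ with $F_0=1$; differentiating at $t=0$ gives $\L_X\omega=a\,\omega$ on $M$, where $a=\frac{d}{dt}\big|_{t=0}F_t$ is a smooth function on $M$. Pulling this back via $p\colon M_0\to M$ and writing $a$ also for $p^*a$, the relation becomes $\L_X(p^*\omega)=a\,p^*\omega$ on $M_0$ (the lift of $X$ having flow covering $\phi_t$).

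Next I would insert $p^*\omega=e^\lambda\omega_0$ from \eqref{pres-lcs} into this identity. By the Leibniz rule,
\[
\L_X(e^\lambda\omega_0)=(X\lambda)\,e^\lambda\omega_0+e^\lambda\,\L_X\omega_0=a\,e^\lambda\omega_0,
\]
and dividing by $e^\lambda$ yields
\[
\L_X\omega_0=b\,\omega_0,\qquad b:=a-X\lambda,
\]
for the smooth function $b$ on $M_0$. So the conformal behaviour of $X$ on $M$ does force $\L_X\omega_0$ to be proportional to $\omega_0$, but a priori only up to a function; the whole content of the proposition is that $b$ is in fact a constant $a(X)$.

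The heart of the argument is this last step. Because $\omega_0$ is symplectic we have $d\omega_0=0$, so Cartan's formula gives $\L_X\omega_0=d\iota_X\omega_0$; applying $d$ to $d\iota_X\omega_0=b\,\omega_0$ produces $0=db\wedge\omega_0+b\,d\omega_0=db\wedge\omega_0$. Here I would invoke the standing assumption that $\dim M>2$, hence $\dim M_0=\dim M\geq 4$: for a symplectic vector space of dimension $2n$ with $n\geq 2$, the Lefschetz operator $\beta\mapsto\beta\wedge\omega_0$ is injective on $1$-forms (pointwise symplectic linear algebra, the injectivity of $L\colon\Lambda^1\to\Lambda^3$ for $k=1\leq n-1$). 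Thus $db\wedge\omega_0=0$ forces $db=0$, and since $M_0$ is connected, $b$ equals a constant $a(X)\in\R$, as claimed. I expect the only genuine subtlety to be precisely this passage from a pointwise conformal factor to a global constant: it is exactly where non-degeneracy of $\omega_0$ and the hypothesis $\dim M>2$ are essential, since in dimension $2$ the map $\beta\mapsto\beta\wedge\omega_0$ is no longer injective and the conclusion would fail.
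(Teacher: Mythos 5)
Your proof is correct and follows essentially the same route as the paper's: the paper's (very terse) argument is exactly that $\L_X\omega_0=a(X)\omega_0$ must be closed because $d\omega_0=0$, whence $a(X)$ is constant. You have simply filled in the details the paper leaves implicit, namely the derivation of the functional proportionality $\L_X\omega_0=b\,\omega_0$ via $p^*\omega=e^\lambda\omega_0$, and the pointwise injectivity of $\beta\mapsto\beta\wedge\omega_0$ on $1$-forms in dimension $\ge 4$, which is precisely where the standing assumption $\dim M>2$ enters.
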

\begin{proof} $d\omega_0=0$ implies that $\L_X\omega = a(X)\omega_0$ has to be
closed as well, hence $a(X)$ is constant.\end{proof} 
This leads to the following definitions.

\begin{defi}[following \cite{MaMoPi}] An infinitesimal lcs
automorphism of $(M,\omega)$ is called {\em special} if its lift to a
(hence any) presentation is infinitesimal symplectomorphic.\end{defi} 

One can read off whether $X$ is special lcs without going to any
presentation, as follows.

\begin{prop}\label{spectheta} A vector field $X$ on an lcs manifold
$(M,\omega)$ is special lcs if and only if 
\begin{equation}\label{leespec}
d_\theta(\omega(X,\cdot))=\L_X\omega-\theta(X)\omega=0.
\end{equation}
\end{prop}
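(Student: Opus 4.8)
The plan is to prove the two equalities separately: the middle equality is a purely formal consequence of Cartan calculus together with the lcs equation $d\omega=\theta\wedge\omega$, while the equivalence with the special lcs condition follows by passing to a presentation.

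First I would verify the identity $d_\theta(\omega(X,\cdot))=\L_X\omega-\theta(X)\omega$. Writing $\iota_X\omega:=\omega(X,\cdot)$ for the contraction and using Cartan's formula $\L_X\omega=d\,\iota_X\omega+\iota_X d\omega$ together with the defining relation $d\omega=\theta\wedge\omega$, the only nontrivial point is to contract $\theta\wedge\omega$ with $X$. Since $\iota_X$ is an antiderivation of degree $-1$, one has $\iota_X(\theta\wedge\omega)=\theta(X)\,\omega-\theta\wedge\iota_X\omega$, whence $d\,\iota_X\omega=\L_X\omega-\theta(X)\,\omega+\theta\wedge\iota_X\omega$. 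Subtracting $\theta\wedge\iota_X\omega$ then gives
\[
d_\theta(\iota_X\omega)=d\,\iota_X\omega-\theta\wedge\iota_X\omega=\L_X\omega-\theta(X)\,\omega,
\]
which is the middle equality. Note that this holds on any lcs manifold and for any vector field $X$.

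Next, to relate the vanishing of this expression to the special lcs condition, I would lift $X$ to a presentation $(M_0,\omega_0,\Gamma,\rho,\lambda)$ and compute $\L_X\omega_0$ there. Using $\omega_0=e^{-\lambda}p^*\omega$ and $p^*\theta=d\lambda$ from \eqref{pres-lcs} and \eqref{eq:omega0}, and the fact that the lifted field is $p$-related to $X$, so that $\L_X$ commutes with $p^*$, a short computation yields
\[
\L_X\omega_0=\L_X\bigl(e^{-\lambda}p^*\omega\bigr)=-X(\lambda)\,e^{-\lambda}p^*\omega+e^{-\lambda}p^*(\L_X\omega)=e^{-\lambda}\,p^*\bigl(\L_X\omega-\theta(X)\,\omega\bigr),
\]
where I used $X(\lambda)=d\lambda(X)=(p^*\theta)(X)=p^*(\theta(X))$. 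Since $p$ is a local diffeomorphism, $p^*$ is injective on forms, and $e^{-\lambda}>0$; hence the left-hand side vanishes if and only if $\L_X\omega-\theta(X)\,\omega=0$ on $M$. As $X$ is special lcs precisely when its lift satisfies $\L_X\omega_0=0$, this establishes the equivalence. Observe also that $\L_X\omega_0=0$ forces $\L_X\omega=\theta(X)\,\omega$, so such an $X$ is automatically an infinitesimal lcs automorphism.

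I do not expect a genuine obstacle: the argument is a short computation once the right identities are assembled. The two points requiring care are the sign in the antiderivation rule for $\iota_X(\theta\wedge\omega)$ and the commutation of $\L_X$ with $p^*$ for the lifted field. The crux is the transfer formula $\L_X\omega_0=e^{-\lambda}p^*\bigl(\L_X\omega-\theta(X)\,\omega\bigr)$, which converts the intrinsic condition on $M$ into the infinitesimal symplectic condition on the presentation $M_0$.
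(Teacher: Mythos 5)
Your argument is correct and follows essentially the same route as the paper: the first equality is verified as a pointwise identity via the Cartan formula and the lcs relation $d\omega=\theta\wedge\omega$, and the equivalence with the special condition is obtained by computing $\L_X\omega_0=e^{-\lambda}p^*\bigl(\L_X\omega-\theta(X)\omega\bigr)$ on a presentation. The paper's proof is just a terser version of the same two steps, so there is nothing to add.
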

\begin{proof} The first equality in \eqref{leespec} is an identity,
following directy from the definition of $d_\theta$ and the Cartan
formula for $\L_X\omega$.

Consider a presentation $(M_0,\omega_0)$ of
the lcs manifold $M$, and by $X$ again the lift of $X$ to $M_0$. Being
special for $X$ is equivalent to $\L_X\omega_0=0$, where
$\omega_0=e^{-\lambda}p^*\omega$ (recall that $\lambda:M_0\ra\R$ is a
primitive of (the pull-back to $M_0$ of) the Lee form $\theta$). Then
the equality
\[
\L_X\omega_0=e^{-\lambda}(-\theta(X)\omega+\L_X\omega)
\]
implies that $X$ is special if an only if the second term in
\eqref{leespec} vanishes.
\end{proof}



\begin{defi}A vector field $X$ on an lcs manifold $(M, J,
\theta,\omega)$ is \emph{twisted Hamiltonian} if there exists a smooth
function $h_X$ on $M$ such that $\iota_X \omega = d_\theta h_X$. This
function is called the {\em twisted Hamiltonian} of the vector field
$X$.
\end{defi} In particular, a twisted Hamiltonian vector field is
lcs (even special lcs), see below. 

\begin{rem}\label{twHam1} As an example, in the lcs setting, the
{\em s}-Lee field $V$ is twisted Hamiltonian for the (constant) function
$h_V:=1$, see \eqref{aLlcs}.\end{rem}

In the definition above, we refer to $h_X$ as to {\em  the} twisted
Hamiltonian of $X$. The uniqueness of a twisted Hamiltonian for a
given lcs vector field is due to the following standard  result. 
\begin{lemma}\cite[Lemma 2.1]{MaMoPi} \label{lem:twisteddiffinj} Let
$(M,\omega,\theta)$ be a strict lcs manifold. Then the twisted
differential $d_\theta:C^\infty(M)\to \Omega^1(M)$ is injective.
\end{lemma}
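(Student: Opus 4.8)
The plan is to unravel the equation $d_\theta f = 0$, which reads $df = f\theta$, and to exploit the fact that this is a linear first-order condition forcing $f$ to vanish either nowhere or everywhere. Throughout I assume, as is standard in this paper, that $M$ is connected.

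First I would integrate the relation $df = f\theta$ along curves. Given any smooth path $\gamma\colon[0,1]\to M$, the function $t\mapsto f(\gamma(t))$ satisfies the scalar linear ODE $\frac{d}{dt}f(\gamma(t)) = \theta(\dot\gamma(t))\, f(\gamma(t))$, whose solution is
\[
f(\gamma(t)) = f(\gamma(0))\,\exp\Big(\int_0^t \theta(\dot\gamma(s))\,ds\Big).
\]
Since the exponential factor is strictly positive, this shows that $f(\gamma(0)) = 0$ forces $f(\gamma(1)) = 0$, while $f(\gamma(0)) \neq 0$ forces $f(\gamma(1)) \neq 0$. As $M$ is connected, any two points are joined by such a path, so either $f \equiv 0$ or $f$ is nowhere vanishing.

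It remains to rule out the second alternative using strictness. If $f$ is nowhere zero on the connected manifold $M$, then $f$ has constant sign, and the relation $df = f\theta$ gives $\theta = d\log|f|$ globally on $M$. Thus $\theta$ would be exact, meaning the lcs structure is gcs, contradicting the hypothesis that $(M,\omega,\theta)$ is strict. Hence $f\equiv 0$, and $d_\theta$ is injective.

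The argument has no real obstacle: the only points requiring care are the appeal to connectedness to promote the pointwise dichotomy to a global one, and the precise identification of strictness with the non-exactness of $\theta$. I note that compactness of $M$ is not needed anywhere, only connectedness and the strict hypothesis.
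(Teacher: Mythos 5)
Your proof is correct. The paper does not reprove the cited lemma directly, but it does prove an equivalent general statement (Lemma \ref{lem2.1}: if $\eta$ is closed and non-exact and $df = f\eta$, then $f\equiv 0$), and its argument runs through the universal covering: one takes a primitive $\beta$ of $p^*\theta$ on $\tilde M$, computes that $e^{-\beta}(f\circ p)$ is constant, hence $f\circ p = k e^{\beta}$, and concludes $k=0$ because $e^{\beta}$ is not $\pi_1(M)$-invariant while $f\circ p$ is. Your route reaches the same destination without ever leaving $M$: the ODE along paths gives the dichotomy ``$f\equiv 0$ or $f$ nowhere zero'' directly, and in the second case $\theta = d\log|f|$ exhibits an exact Lee form, contradicting strictness. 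The two proofs hinge on the identical obstruction --- a nowhere-vanishing solution of $df = f\theta$ is precisely a global integrating factor, i.e.\ a primitive of $\theta$ up to logarithm --- but yours is slightly more elementary (no covering space, no equivariance discussion), whereas the paper's version fits the covering-space formalism it uses systematically elsewhere (presentations, the homomorphism $\rho$, etc.). Your closing remarks are also accurate: connectedness is the only global hypothesis needed besides strictness, and strictness is indeed equivalent to non-exactness of $\theta$ as the paper defines it.
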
 Clearly, if $X$ is twisted Hamiltonian, then
$\omega(X,\cdot)$ is $d_\theta$-exact, which means it is
$d_\theta$-closed, thus $X$ is special lcs by Proposition
\ref{spectheta}. Conversely, for a special lcs vector field to be
twisted Hamiltonian, the twisted cohomology class of $\omega(X,\cdot)$
(for the twisted differential $d_\theta$; this is also called the
Morse-Novikov cohomology $H^1(M,d_\theta)$) must vanish. 

Therefore, if  $H^1(M,d_\theta)=0$, then all special lcs vector fields
on $M$ are twisted Hamiltonian. However, the computation of the
Morse-Novikov cohomology is not straightforward \cite{is-ot}, so this
cohomological criterion has limited applicability. 

On the other hand, we know that special lcs vector fields exist on an
Inoue surface (which is lcK as well, and the vector field is special
lcK, i.e., holomorphic as well), that are not twisted Hamiltonian
\cite[Example 5.17]{otiman}.
\begin{rem}\label{r2.10} As shown in \cite[Rmk.~3.5]{Pilca} for the
lcK setting, it is easy to see that if $X$ is twisted Hamiltonian on
$(M,\omega)$, then it remains so for all conformally equivalent
$2$-forms $\omega'=e^f\omega$, and the corresponding twisted
Hamiltonian functions change as $h_X'=h_Xe^f$.
\end{rem} Another approach to twisted Hamiltonians of special lcs
vector fields on $(M,\omega)$ is to consider the Hamiltonians of their
lifts to the universal covering, and, possibly, to other
presentations.
\begin{prop}\label{twabel} Let $X$ be a special lcs vector field on
$(M,\omega)$. The field $X$ is twisted Hamiltonian if and only if, for
some (and then, for any) presentation $(M_0, \omega_0,\Gamma,\rho)$,
there exists a Hamiltonian $F$ for its lift to $M_0$ such that 
\begin{equation}\label{Fequiv} F\circ\gamma=e^{-\rho(\gamma)}F
\end{equation} for all $\gamma\in \Gamma$.
\end{prop}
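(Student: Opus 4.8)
The plan is to transfer the question from $M$ to the presentation $M_0$ by means of the conformal factor, the key being the elementary gauge identity for the twisted differential when the Lee form is exact. On $M_0$ we have $p^*\theta=d\lambda$, and for any function $g$ a direct computation gives
\[
e^{\lambda}\,d(e^{-\lambda}g)=dg-g\,d\lambda,
\]
which is exactly the pull-back of $d_\theta g$. Thus multiplication by $e^{-\lambda}$ intertwines the pulled-back twisted differential with the ordinary $d$; crucially, this is the same factor that relates the two symplectic data through $p^*\omega=e^{\lambda}\omega_0$. The dictionary I expect to use throughout is $p^*h_X = e^{\lambda}F$, equivalently $F = e^{-\lambda}p^*h_X$.

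First I would treat the forward implication. Assuming $\iota_X\omega=d_\theta h_X$ on $M$, I lift $X$ to $M_0$ and pull the identity back, obtaining $e^{\lambda}\iota_X\omega_0 = d(p^*h_X)-(p^*h_X)\,d\lambda = e^{\lambda}\,d(e^{-\lambda}p^*h_X)$; dividing by $e^\lambda$ shows that $F:=e^{-\lambda}p^*h_X$ is an ordinary Hamiltonian for the lift, i.e.\ $\iota_X\omega_0=dF$. The equivariance is then immediate from the structure of the presentation: since $p\circ\gamma=p$ we have $(p^*h_X)\circ\gamma=p^*h_X$, while $\lambda\circ\gamma=\lambda+\rho(\gamma)$ by \eqref{eq:pres}, so $F\circ\gamma=e^{-\rho(\gamma)}F$.

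For the converse I would reverse this. Given a Hamiltonian $F$ on $M_0$ with $\iota_X\omega_0=dF$ and $F\circ\gamma=e^{-\rho(\gamma)}F$, set $h:=e^{\lambda}F$; the two exponential factors cancel, $h\circ\gamma=e^{\lambda+\rho(\gamma)}e^{-\rho(\gamma)}F=h$, so $h$ is $\Gamma$-invariant and descends to a function $h_X$ on $M$ with $p^*h_X=e^{\lambda}F$. Applying the gauge identity once more gives $p^*(\iota_X\omega)=e^{\lambda}dF=e^{\lambda}d(e^{-\lambda}p^*h_X)=p^*(d_\theta h_X)$, and since $p$ is a covering this yields $\iota_X\omega=d_\theta h_X$. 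The clause ``for some (and then, for any)'' is then automatic: being twisted Hamiltonian is an intrinsic property of $X$ on $M$, so the forward direction manufactures an equivariant Hamiltonian on every presentation, while the converse shows that its existence on a single presentation already forces $X$ to be twisted Hamiltonian.

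The main difficulty here is not conceptual but bookkeeping: one must keep the factors $e^{\pm\lambda}$ consistently attached and verify that it is precisely this factor that simultaneously converts $\omega$ into $\omega_0$ and $d_\theta$ into $d$. Once the identity $d_\theta=e^{\lambda}\circ d\circ e^{-\lambda}$ (valid after pull-back, where $\theta=d\lambda$) is isolated, the two directions are mirror-image computations, and the only genuine input is the $\rho$-equivariance of $\lambda$ from \eqref{eq:pres}, which is exactly what converts the $\Gamma$-invariance of $h_X$ into the twisting factor $e^{-\rho(\gamma)}$ on $F$.
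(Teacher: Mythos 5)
Your proof is correct and follows essentially the same route as the paper: both directions rest on the dictionary $F=e^{-\lambda}p^*h_X$ together with the gauge identity $p^*(d_\theta h)=e^{\lambda}d(e^{-\lambda}p^*h)$ and the $\rho$-equivariance of $\lambda$, which is exactly the paper's computation. Your handling of the ``for some (and then, for any)'' clause is also the intended one.
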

\begin{proof} Let $F$ be an $e^{-\rho}$-equivariant Hamiltonian for the
lift of $X$, as described by \eqref{Fequiv}.  If we set $h_X:=e^{\lambda}F$, for
$\lambda:M_0\ra\R$, $d\lambda=\theta$ and
$\lambda\circ\gamma=\lambda+\rho(\gamma)$, for all $\gamma\in\Gamma$,
then $h_X$ is $\Gamma$-invariant and hence a function on $M$
satisfying $d_\theta h_X=e^\lambda dF=\omega(X,\cdot)$.

Conversely, if $X$ has $h_X$ as associated twisted Hamiltonian, then
$F:=e^{-\lambda}h_X$ is a Hamiltonian for $X$ on any presentation:
$$dF=e^{-\lambda}(-\theta+dh_X)=e^{-\lambda}d_\theta
h_X=e^{-\lambda}\omega(X,\cdot)=\omega_0(X,\cdot).$$ Moreover,
$F\circ\gamma=e^{-\rho(\gamma)}F$, as required.   
\end{proof} Of course, we know for sure that, for any special lcs vector field $X$ on $M$, its lift to the
universal covering of $M$ admits a Hamiltonian. This Hamiltonian function turns out to satisfy a more complicated $\pi_1(M)$-equivariance relation (see Proposition \ref{affine} below) than \eqref{Fequiv}, which characterizes the twisted Hamiltonian case.

In the following proposition, we denote by $\Aff_+(\R)$ the group of
orientation-preserving affine transformations of $\R$, i.e., the
transformations of the form $t \mapsto at+b$, for $a>0$ and $b \in
\R$. The group $\R_+^*$ of positive homotheties is the abelianization
of $\Aff_+(\R)$, i.e. the quotient of $\Aff_+(\R)$ by its commutator,
which is $\R$ itself, seen as the group of translations in $\R$. 

\begin{prop}\label{affine} Let $(M,\omega)$ be a strict lcs manifold
and $(\tilde M,\omega_0,\rho)$ the corresponding universal
presentation. For every special lcs vector field $X$ on $M$, there is
a group homomorphism 
$$\varphi_X:\pi_1(M)\ra \Aff_+(\R),$$
that extends $e^{-\rho}:\pi_1(M)\ra\R_+^*$.

In particular, $X$ is twisted Hamiltonian if and only if the image of
$\varphi_X$ is Abelian (thus if and only if, up to an affine
reparametrization of $\R$, $\varphi_X$ coincides with its abelianization $e^{-\rho}$).
\end{prop}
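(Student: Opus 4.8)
The plan is to produce $\varphi_X$ by lifting $X$ to the universal presentation, choosing a Hamiltonian for it there, and recording how that Hamiltonian is distorted by the deck group $\pi_1(M)$; the affine distortion will be the desired homomorphism, and the twisted Hamiltonian condition will turn out to be exactly the existence of a common fixed point for the associated affine maps.

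First I would lift $X$ to $\tilde M$, again denoted $X$. Since $X$ is special lcs, its lift is an infinitesimal symplectomorphism, i.e.\ $\L_X\omega_0=0$, so the lift is a symplectic vector field on the simply connected manifold $\tilde M$ and hence admits a global Hamiltonian $F$ with $\iota_X\omega_0=dF$. For $\gamma\in\pi_1(M)$, using that the lift is $\Gamma$-invariant ($\gamma^*X=X$) together with $\gamma^*\omega_0=e^{-\rho(\gamma)}\omega_0$, I compute $d(F\circ\gamma)=\gamma^*(\iota_X\omega_0)=\iota_X(e^{-\rho(\gamma)}\omega_0)=e^{-\rho(\gamma)}\,dF$. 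Connectedness of $\tilde M$ then yields a constant $c_\gamma\in\R$ with $F\circ\gamma=e^{-\rho(\gamma)}F+c_\gamma$.

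Next I would set $\varphi_X(\gamma)\colon t\mapsto e^{-\rho(\gamma)}t+c_\gamma$. Evaluating $F\circ(\gamma_1\gamma_2)$ in two ways produces the cocycle relation $c_{\gamma_1\gamma_2}=c_{\gamma_1}+e^{-\rho(\gamma_1)}c_{\gamma_2}$, which is precisely what makes $\gamma\mapsto\varphi_X(\gamma)$ multiplicative, hence a homomorphism into $\Aff_+(\R)$. Since the linear part of $\varphi_X(\gamma)$ is $e^{-\rho(\gamma)}$, composing $\varphi_X$ with the abelianization $\Aff_+(\R)\to\R_+^*$ recovers $e^{-\rho}$; this is the sense in which $\varphi_X$ extends $e^{-\rho}$.

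For the equivalence I would invoke Proposition~\ref{twabel}: $X$ is twisted Hamiltonian iff some Hamiltonian of the lift satisfies $F\circ\gamma=e^{-\rho(\gamma)}F$, i.e.\ all $c_\gamma$ vanish. As $F$ is determined only up to an additive constant $s$, replacing $F$ by $F+s$ sends $c_\gamma$ to $c_\gamma+s(1-e^{-\rho(\gamma)})$, which is exactly conjugation of $\varphi_X$ by the translation $t\mapsto t+s$. Thus $X$ is twisted Hamiltonian iff $\varphi_X$ can be conjugated by a translation into $\R_+^*$, equivalently iff the maps $\varphi_X(\gamma)$ share a common fixed point. The main obstacle is the last link, between this common fixed point and the \emph{Abelian} condition, and here strictness is essential: because $M$ is strict we have $\rho\neq0$, so some $\varphi_X(\gamma_0)$ has linear part $e^{-\rho(\gamma_0)}\neq1$ and hence a \emph{unique} fixed point $t_0$. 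If the image is Abelian, every $\varphi_X(\gamma)$ commutes with $\varphi_X(\gamma_0)$, forcing $\varphi_X(\gamma)(t_0)$ to be a fixed point of $\varphi_X(\gamma_0)$ and hence equal to $t_0$; so all maps fix $t_0$. Conversely, a common fixed point conjugates the whole image into the Abelian scalings $\R_+^*$. Shifting $F$ by $-t_0$ then annihilates all $c_\gamma$ at once, closing the loop with Proposition~\ref{twabel}. Note that without strictness a purely translational image would be Abelian yet fixed-point-free, so it is exactly the nontriviality of $\rho$ that forces the two conditions to coincide.
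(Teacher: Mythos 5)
Your proposal is correct and follows essentially the same route as the paper: lift $X$ to $\tilde M$, take a Hamiltonian $F$, read off the affine cocycle $F\circ\gamma=e^{-\rho(\gamma)}F+c_\gamma$, and identify the twisted Hamiltonian condition with conjugacy of $\varphi_X$ into $\R_+^*$ via a translation. Your explicit fixed-point argument for why an Abelian image must fix the unique fixed point of some $\varphi_X(\gamma_0)$ with $e^{-\rho(\gamma_0)}\neq 1$ is exactly the justification the paper compresses into its closing remark that nontriviality of $\rho$ leaves this as the only possibility.
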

\begin{proof} As $X$ is special, there exists $F:\tilde M\ra\R$ such
that $dF=\omega_0(X,\cdot)$ (unique up to an additive constant). Thus, for every
$\gamma\in\pi_1(M)$, 
$$d(F\circ\gamma-e^{-\rho(\gamma)}F)=0,$$
as the lift of $X$ is $\gamma$-invariant and
$\gamma^*\omega_0=e^{-\rho(\gamma)}\omega_0$. If we denote by
$b_\gamma:=F\circ\gamma-e^{-\rho(\gamma)}F$ and by 
$$\varphi_X(\gamma)(t):=e^{-\rho(\gamma)}t+b_\gamma,\ t\in\R,$$
we obtain thus a map $\varphi:\pi_1(M)\ra \Aff_+(\R)$ satisfying
$$F\circ\gamma=e^{-\rho(\gamma)}F+b_\gamma=\varphi_X(\gamma)\circ F.$$
To prove that $\varphi_X$ is a group homomorphism, we compute 
$$\begin{array}{rcl}b_{\gamma\gamma'}&=&F\circ\gamma\circ\gamma'-e^{-\rho(\gamma\gamma')}F\\
&=&(F\circ\gamma-e^{-\rho(\gamma)}F)\circ\gamma'+e^{-\rho(\gamma)}(F\circ\gamma'-e^{-\rho(\gamma')}F)\\
&=&b_\gamma+e^{-\rho(\gamma)}b_{\gamma'},\end{array}$$ which is
precisely the formula for the composition of affine maps.

Finally, $X$ is twisted Hamiltonian if and only if there exists a
constant $b\in\R$ such that $F-b$ is $e^{-\rho}$-equivariant, i.e.,
$\varphi_X=T_b\circ e^{-\rho}\circ T_b^{-1}$, where $T_b:\R\ra\R$
denotes the translation with $b$.

Note that, since $\rho$ is nontrivial, this is the only possibility
for the image of $\varphi_X$ to be Abelian.
\end{proof} This proposition helps us to establish various criteria
for a special lcs vector field to be twisted Hamiltonian:  
\begin{corol}\label{minHam} A vector field on $(M,\omega)$ is twisted
Hamiltonian if and only if its lift to the minimal covering, or to any
presentation with Abelian group $\Gamma$, is Hamiltonian.\end{corol}
\begin{proof}If $X$ has $h_X:M\ra\R$ as twisted Hamiltonian, then its
lift to any  presentation $(M_0,\omega_0,\Gamma)$ admits
$h_Xe^{-\lambda}$ as a Hamiltonian (as usual, $d\lambda=\theta$).

Conversely, suppose $X$ is Hamiltonian on $(M_0,\omega_0,\Gamma)$,
with $\Gamma$ Abelian. Then, following the proof of Proposition
\ref{affine}, we obtain a group homomorphism from $\Gamma$ to
$\Aff_+(\R)$. As its image is Abelian, it follows that $X$ has an
$e^{-\rho}$-equivariant Hamiltonian, thus $X$ is twisted Hamiltonian
by Proposition \ref{twabel}.
\end{proof} We give some examples of  properties of the fundamental
group of $M$ that imply that every special lcs vector field is twisted
Hamiltonian:
\begin{corol}\label{ss,nilp} If the fundamental group of $M$ is
semisimple or nilpotent, then every special lcs vector field on
$(M,\omega)$ is twisted Hamiltonian.
\end{corol}
\begin{proof} With the above hypotheses, the image of $\varphi_X$ in
$\Aff_+(\R)$ is also semisimple or nilpotent. This, in turn, we can
show it implies that this image $H\subset \Aff_+(\R)$ is Abelian: let
$\varphi_i(t):=a_it+b_i$, $i=1,2$ be two elements in $\Aff_+(\R)$. We
compute their commutator:
\begin{align*}
\varphi_1&\circ\varphi_2\circ\varphi_1^{-1}\circ\varphi_2^{-1}(t)\\ &=
a_1(a_2a_1^{-1}(a_2^{-1}(t-b_2)-b_1)+b_2)+b_1=t+b_1(1-a_2)-b_2(1-a_1).
\end{align*} This implies that
\begin{itemize}
\item The commutator of any subgroup $H$ of $\Aff_+(\R)$ is included
in the subgroup of translations and is thus Abelian;
\item If $\varphi_1$, with $a_1=1$ and $b_1\ne 0$, is in the center of
$H$, then $H$ consists only of translations;
\item If $\varphi_1$, with $a_1\ne 1$, is in the center of $H$, then
all elements of $H$ must fix the unique fixed point of $\varphi_1$, in
particular $H$ contains no nontrivial translations and thus its
commutator is trivial.
\end{itemize} If $H$ is nilpotent, then its center is nontrivial, thus
$H$ is Abelian. If $H$ is a direct product of simple groups, then its
commutator is the product of its nonabelian factors (if any), but this
has to be contained in the subgroup of translations, it (and therefore
$H$ itself) is thus Abelian. 
\end{proof} Another application of Proposition \ref{affine} is when
the commutator of $\pi_1(M)$ is ``small enough'':
\begin{corol}\label{comm-small} Suppose the torsion-free part of the
abelianization of the commutator of $\pi_1(M)$ is trivial or
cyclic. Then every special lcs vector field on $(M,\omega)$ is twisted
Hamiltonian.\end{corol}
\begin{proof} If we denote by $H$ the image of $\varphi_X$, for $X$ a
special lcs vector field, then the restriction of $\varphi_X$ to the
commutator of $\pi_1(M)$ is a group homomorphism into $\R$, thus it
induces a map $\phi$ from the abelianization of the commutator of
$\pi_1(M)$ to $\R$. Its image is the commutator $H_1$ of $H$, and it
is a subgroup of $\R$ generated by at most $1$ element.

If the abelianization of the commutator of $\pi_1(M)$ is pure torsion,
$H_1$ is trivial thus $H$ is Abelian.

If we suppose that $H_1$ is cyclic, generated by $b\in\R$, then we use
that $H_1$ must be invariant by conjugation with non-translation
elements of $H$. But conjugation of the translation $t\mapsto t+b$ by
$t\mapsto at+b$, with $a<1$ (such elements exist in $H$), is the
translation by $ab$, which does not belong to the group generated by
$b$. Thus, the case, $H_1$ cyclic does not occur, hence $H$ is
Abelian.\end{proof} 
There is another instance where a special lcs vector field is twisted Hamiltonian that will turn out useful in the Vaisman case:
\begin{prop}\label{dexact} Let $(M,\omega)$ be lcs and let $\omega=d_\theta\eta$, where $\theta$ is the Lee form of $\omega$. Then every vector field $X$ on $M$ that satisfies $\theta(X)\equiv 0$ and  whose flow preserves $\beta$ admits $-\beta(X)$ as the twisted Hamiltonian.\end{prop}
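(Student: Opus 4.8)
The plan is to verify the defining equation $\iota_X\omega=d_\theta h_X$ for $h_X:=-\beta(X)$ by a direct Cartan-calculus computation, using only the three hypotheses that $\omega=d_\theta\beta$ (I read the primitive $\eta$ of the statement as the $1$-form $\beta$ preserved by the flow), that $\theta(X)\equiv 0$, and that $\L_X\beta=0$. Since $d_\theta=d-\theta\wedge\cdot$, the first step is to rewrite the hypothesis as $\omega=d\beta-\theta\wedge\beta$ and to contract it with $X$, obtaining $\iota_X\omega=\iota_X d\beta-\iota_X(\theta\wedge\beta)$.

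The second step treats the two resulting terms separately. For the wedge term, the graded Leibniz rule for the interior product of the two $1$-forms $\theta$ and $\beta$ gives $\iota_X(\theta\wedge\beta)=\theta(X)\,\beta-\beta(X)\,\theta$, which by the hypothesis $\theta(X)\equiv 0$ collapses to $-\beta(X)\,\theta$. For the exterior-derivative term, I would invoke Cartan's magic formula $\L_X\beta=\iota_X d\beta+d\,\iota_X\beta$; since the flow of $X$ preserves $\beta$, the left-hand side vanishes, so $\iota_X d\beta=-d(\iota_X\beta)=-d(\beta(X))$.

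Combining the two computations yields $\iota_X\omega=-d(\beta(X))+\beta(X)\,\theta$. The final step is to recognize the right-hand side as a twisted differential of a function: for $f=-\beta(X)$ one has $d_\theta f=df-f\,\theta=-d(\beta(X))+\beta(X)\,\theta$, which is exactly the expression just obtained. Hence $\iota_X\omega=d_\theta(-\beta(X))$, so that $-\beta(X)$ is indeed the twisted Hamiltonian of $X$, as claimed.

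There is no substantial obstacle here: the assertion is a one-line identity once the three hypotheses are fed into the Cartan calculus. The only points demanding genuine care are the sign conventions — in particular distinguishing $d_\theta$ acting on the $1$-form $\beta$, where $\theta\wedge\beta$ is an honest wedge of $1$-forms, from $d_\theta$ acting on the function $-\beta(X)$, where $\theta\wedge f$ degenerates to the scalar multiple $f\,\theta$ — together with the correct application of the graded Leibniz rule in $\iota_X(\theta\wedge\beta)$. Getting these signs consistent is what makes the two ends of the computation match.
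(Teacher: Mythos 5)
Your computation is correct and is exactly the paper's proof, just read in the opposite direction: the paper starts from $d_\theta(-\beta(X))=-d(\beta(X))+\beta(X)\theta$ and identifies it with $(d_\theta\beta)(X,\cdot)=\omega(X,\cdot)$ via the Cartan formula and $\theta(X)=0$, while you contract $\omega=d_\theta\beta$ with $X$ and arrive at the same identity. Your reading of the statement's $\eta$ as the $1$-form $\beta$ (a typo in the statement) is also the intended one.
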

\begin{proof} We compute
  $$d_\theta(-\beta(X))=-d(\beta(X))+\beta(X)\theta=d\beta(X,\cdot)-\theta\wedge\beta(X,\cdot)=\omega(X,\cdot),$$
  where we have used the Cartan formula for $\L_X\beta=0$ and that $\theta(X)=0$. $X$ is thus twisted Hamiltonian.\end{proof}
\begin{corol}\label{twHamVais} A special lcK (i.e., special lcs and holomorphic) vector field on a Vaisman manifold is twisted Hamiltonian.\end{corol}
\begin{proof}It is well-known (see, for example \cite{DO}), that the Vaisman $2$-form $\omega$ equals $d_\theta(\theta\circ J)$. if $X$ is special lcs, then it is conformal Killing, thus by \cite[Proposition
  3.8]{MaMoPi} its flow preserves the Gauduchon metric, which is the Vaisman metric itself. Thus $\L_X\omega=0$, which implies $\L_X\theta=0$ and $\L_X(\theta\circ J)=0$, because $X$ is holomorphic. As $\theta(X)$ is constant and $X$ is special lcK (lcs), then $\theta(X)\equiv 0$ and we conclude by Proposition \ref{dexact}.\end{proof}

\section{Group actions on lcs manifolds}\label{sec:4}
Let $G$ be a Lie group with Lie algebra $\lie g$ acting on an lcs
manifold $(M,\omega)$. We will denote by $\bar X\in  \mathcal{X}(M)$
the fumdamental vector field associated to $X\in\lie g$.

\begin{defi} The action of $G$ is {\em lcs} on $(M,\omega)$ if and only
  if $g^*\omega$ is proportional with $\omega$, for $g\in G$. The
  action is {\em special lcs}, resp.\ \emph{(weakly) twisted Hamiltonian}
if every $X \in \lie g$ induces a special lcs, resp.\ twisted
Hamiltonian field $\bar X$ on $M$. The action is of \emph{Lee type} if the {\em
  s}-Lee field is induced by an element of the Lie algebra of $G$.
\end{defi} 
The adverb {\em weakly} refers to the possible lack of an
equivariant {\em twisted moment map} (see below). We will however omit
it in the sequel, and refer simply to {\em twisted Hamiltonian}
actions. In \cite{Pilca}, the author requires for the definition of
{\em twisted Hamiltonian} an additional condition with respect to the
{\em weakly twisted Hamiltonian} setting. This condition is that the
moment map, seen as a map from $\lie g$ to $C^\infty(M)$, is a Lie
algebra homomorphism for the {\em twisted Poisson structure}
\begin{equation}\label{B} \{f,g\}_\theta:=\omega^{-1}(d_\theta
f,d_\theta g).
\end{equation} If fact, it is easy to see that this condition is
automatically satisfied on strict lcs manifolds, which we consider here,
hence we omit it in the definition. For a complete study of equivalent
formulations of \eqref{B}, and also for their automatic occurence on a
strict lcs manifold, see \cite[Section 3]{HallerRybicki}. 

\begin{prop}\label{closure-sLee} Suppose that the closure of the flow
  of the {\em s}-Lee field $V$ on a compact lcs manifold $(M,\omega)$ is a compact
  torus $T$ (equivalently, the flow of $V$ consists of isometries of
  some Riemannian metric on $M$). Then the action of $T$ is special lcs.\end{prop}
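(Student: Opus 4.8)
The plan is to check the infinitesimal criterion of Proposition~\ref{spectheta} for every fundamental field $\bar X$, $X\in\lie t$: I must produce $\L_{\bar X}\omega-\theta(\bar X)\omega=0$. (The parenthetical equivalence is standard: a one-parameter group with relatively compact closure in $\mathrm{Diff}(M)$ preserves an averaged metric, and for compact $M$ the closure of a one-parameter isometry group is a torus.)

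First I would compute, from $\iota_V\omega=-\theta$, $d\theta=0$ and $d\omega=\theta\wedge\omega$, that $\L_V\omega=0$ and $\L_V\theta=0$; in particular $\theta(V)=-\omega(V,V)=0$. Since the flow of $V$ is dense in $T$ and $g\mapsto g^*\omega$, $g\mapsto g^*\theta$ are continuous, every $g\in T$ preserves $\omega$ and $\theta$; hence $\L_{\bar X}\omega=0$ and $\L_{\bar X}\theta=0$, and (using $d\theta=0$) $\theta(\bar X)$ is a constant $c(X)$, linear in $X$. The presentation computation behind Proposition~\ref{aX} then gives $\L_{\bar X}\omega_0=-c(X)\,\omega_0$, so $\bar X$ is special lcs exactly when $c(X)=0$. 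Thus the whole statement reduces to the single identity $\theta(\bar X)\equiv 0$.

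Here I would first dispose of the generic case cheaply. From $\L_{\bar X}\omega=0$ one gets $d_\theta(\iota_{\bar X}\omega)=\L_{\bar X}\omega-\theta(\bar X)\omega=-c(X)\,\omega$; so if some $c(X)\neq 0$ then $\omega=d_\theta\bigl(-c(X)^{-1}\iota_{\bar X}\omega\bigr)$ is $d_\theta$-exact. Consequently, whenever the Morse--Novikov class $[\omega]\in H^2(M,d_\theta)$ is nonzero, $c$ vanishes identically and we are done. The remaining, and essential, case is that of a twisted-exact form $\omega=d_\theta\eta$ (which includes the Vaisman case, where $\omega=d_\theta(\theta\circ J)$). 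There I would pass to the minimal covering, on which $\omega_0=e^{-\lambda}\hat p^*\omega=d(e^{-\lambda}\hat p^*\eta)$ is an exact symplectic form and, by Remark~\ref{twHam1}, the lift of $V$ is Hamiltonian with Hamiltonian $e^{-\lambda}$; in particular the lifted $V$-flow preserves the level sets of $\lambda$. The goal is to promote this to the whole lifted torus: since ``$\lambda\circ f=\lambda$'' is a closed condition satisfied on the dense $V$-flow, the upstairs closure preserves $\lambda$, and I would like to conclude $\theta(\bar X)=\hat p^*\theta(\hat{\bar X})=\hat{\bar X}\cdot\lambda=0$.

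The main obstacle is precisely this last promotion in higher lcs rank. When the rank is one, Proposition~\ref{rk1} makes $\lambda$ proper, its level sets compact, and the closure of the lifted $V$-flow inside $\Isom(\hat M,\hat p^*g)$ is a compact torus projecting onto $T$ whose Lie algebra surjects onto $\lie t$ and annihilates $\lambda$, so $c\equiv 0$. In higher rank $\lambda$ is no longer proper, its level sets (the leaves of $\ker\theta$) are dense, and the lifted $V$-flow need not be recurrent on the noncompact cover, so the upstairs closure may be a line and the Lie-algebra surjection can fail. Here the soft arguments break down---$c\colon\lie t\ra\R$ is linear with $c(\upsilon)=0$ for the dense generator $\upsilon$, while $c$ restricted to the period lattice takes values in the now-dense group $\rho(\Gamma)$---and I expect the crux of the proof to be showing, using compactness of $M$ together with the exact (cone-type) structure of the minimal covering, that the lifted $T$-action still preserves each level set of $\lambda$, equivalently that the anti-Lee field generates its torus \emph{within} the Sasakian-type leaves.
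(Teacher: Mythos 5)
Your reduction of the statement to the single identity $\theta(\bar X)\equiv 0$ is correct and is exactly where the paper's proof also ends up: $T$ preserves $\omega$, hence $\theta(\bar X)$ is a constant $c(X)$, and specialness is equivalent to $c\equiv 0$. Your two partial arguments are also sound: the observation that $c(X)\neq 0$ forces $\omega=d_\theta\bigl(-c(X)^{-1}\iota_{\bar X}\omega\bigr)$ to be Morse--Novikov exact (so $c\equiv 0$ whenever $[\omega]\neq 0$ in $H^2(M,d_\theta)$) is a nice shortcut the paper does not use, and the rank-one case via the properness of $\lambda$ from Proposition~\ref{rk1} is fine. But the proposition is asserted for arbitrary lcs rank, and you explicitly leave open the case that actually matters (rank $\ge 2$ with $\omega$ twisted-exact, which includes e.g.\ higher-rank Vaisman manifolds): you only say you ``expect the crux'' to be showing that the lifted torus preserves the level sets of $\lambda$. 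That is indeed the crux, and without it the proof is incomplete --- as you note yourself, the closure of the lifted $V$-flow on the noncompact cover could a priori be a line, and the linear functional $c$ on the period lattice a priori takes values in a dense subgroup of $\R$, so no soft density or closedness argument will finish.

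The paper closes precisely this gap with a Hodge-theoretic argument you are missing. Average to get a $T$-invariant compatible metric $g=\omega(\cdot,J\cdot)$; then $V^\flat=-\theta\circ J$ satisfies $*V^\flat=-\tfrac{1}{(n-1)!}\,\omega^{n-1}\wedge\omega(V,\cdot)=\tfrac{1}{(n-1)!(n-1)}\,d(\omega^{n-1})$, using $\iota_V\omega=-\theta$ and $d\omega=\theta\wedge\omega$. Hence $V^\flat$ is co-exact and therefore $L^2$-orthogonal to every harmonic $1$-form; since $V$ is Killing, $\beta(V)$ is constant for harmonic $\beta$, so $\beta(V)=0$. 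Applying this to the harmonic representative of the tautological $H_1(M,\R)$-valued form of Proposition~\ref{Fprop} shows that the lifted $V$-orbits on the homological covering lie in fibers of the proper map $\bar F$, hence are relatively compact; properness of the action of the closed isometry group then makes the closure of the lifted flow a compact torus surjecting onto $T$. Boundedness of $\lambda$ on its (compact) orbits kills the constants $c(X)$ in every rank. If you want to salvage your outline, this co-exactness of $V^\flat$ is the missing ingredient; your Morse--Novikov dichotomy and the rank-one argument can then be discarded as they are subsumed.
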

\begin{proof}
  The flow of a vector field $V$ on a
  compact manifold $M$ has a compact closure $T$ in the diffeomorphism group if and only if $V$ is Killing with respect to some Riemannian metric. Clearly, as the closure of an abelian group, $T$ is Abelian, thus a torus. We will compare $T$, the closure of the flow of $V$ on $M$, with $\hat T$, the closure of the flow of the lift $\hat V$ of $V$ on the minimal covering $\hat M$ (in the isometry group of the lift of some $T$-invariant Riemannian metric). 
  \smallskip
  
  Two possibilities occur: either $\hat T$ is a torus as well (i.e., the closure of the flow of $\hat V$ is compact as well), or $\hat T$ is a closed, non-compact subgroup of the isometry group of $\hat M$ isomorphic to $\R$. We will first show that the second case  does not occur.

It is known that a closed subgroup of the isometry group of a Riemannian manifold acts properly, see \cite[Proposition 5.2.4]{PalaisTerng}, and hence all isotropy groups of the $\hat T$-action on $\hat M$ are compact. To show compactness of $\hat T$ it is therefore sufficient to show compactness of the $\hat T$-orbits (in fact, of a single $\hat T$-orbit).


 As  $\omega$ is invariant under the flow of the {\em s}-Lee field $V$, it is invariant under its closure $T$ as well. We choose a $T$-invariant associated almost Hermitian metric $g:=\omega(\cdot,J\cdot)$ (for $J$ the corresponding almost complex structure), and consider the (vector-valued) $1$-form $\alpha$ in Proposition \ref{Fprop} to be harmonic with respect to $g$. Its components are thus harmonic forms and therefore $\alpha(V)$ is constant, because $V$ is Killing (see, e.g. \cite[Lemma 5.1]{MaMoPi}). If we show that $\alpha(V)=0$, then the orbits of $\bar V$ (the lift of $V$) on the homological covering $\bar M$ are contained in the fibers of the proper map $\bar F:\bar M\ra H_1(M,\R)$, see Proposition \ref{Fprop}, and are thus relatively compact.  Of course, this means that the orbits of $\hat V$ on $\hat M$, which is a quotient of $\bar M$, are relatively compact.
  
  So we need to prove that, for every harmonic form $\beta$, the constant $\beta(V)$ is zero. Suppose $\beta(V)>0$, then $\int_M\beta(V)>0$, and this is the integral of the scalar product of $\beta$ with $V^\flat$, the metric dual of $V$, which is $-\theta\circ J$, where $J$ is the almost complex structure compatible with $\omega$ and the metric.

  Now, on an almost Hermitian manifold of dimension $2n$, the {\em Hodge star operator} $*:\Lambda^kM\ra\Lambda^{2n-k}M$ has the following expression in degree 1: for all $X\in TM$ we have
\begin{equation}\label{star}*X^\flat=-\frac{1}{(n-1)!}\omega^{n-1} \wedge\omega(X,\cdot).\end{equation}
For $X=V$, we obtain:
$$*V^\flat=-\frac1{(n-1)!}\omega^{n-1}\wedge\theta=\frac1{(n-1)!(n-1)}d(\omega^{n-1}),$$
which is thus exact, i.e., $V^\flat$ is co-exact. But the $L^2$-scalar product of a co-exact form with a harmonic form vanishes, thus $\alpha(V)=0$ for all harmonic 1-forms $\alpha$ on $M$, thus the orbits of $V$ on $\bar M$ are included in the fibers of $\bar F$, which are compact.

This shows that the closure of the flow of $\hat V$ on $\hat M$ is compact, as claimed.
\smallskip

  From \eqref{leespec}, the action of $T$ is special lcs if and only if $\L_{\bar X}\omega=\theta(\bar X)\omega$ for all $X\in\lie t$. But $\L_{\bar X}\omega=0$ implies $\L_{\bar X}\theta=0$, thus $\theta(\bar X)=d\lambda(\bar X)$ is constant for all $X\in\lie t$, where $\lambda:\hat M\ra \R$ is a primitive of $\theta$.

  As $\hat V$ is compact, for every $X\in \lie t$ the constant $\theta(\bar X)=d\lambda(\bar X)$ vanishes, as $\lambda$ is bounded on the orbits of $\hat T$ (which were seen to be compact above). Thus the $T$-action is special lcs.
\end{proof}
\begin{corol} A compact lcs manifold $M$ admits a special lcs torus action of Lee type if and only if the {\em s}-Lee vector field $V$ is Killing with respect to some Riemannian metric.\end{corol}
\begin{corol} The torus obtained as the closure of the flow of the {\em s}-Lee vector field $V$ on a compact Vaisman manifold $M$ acts on $M$ in a holomorphic, twisted Hamiltonian fashion.\end{corol}
\begin{proof} As $V$ is a Killing vector field, the torus $T$ from Proposition \ref{closure-sLee} consists of isometries that preserve the Vaisman $2$-form $\omega$.
  All fundamental vector fields $\bar X$, for $X\in \lie t$ are thus holomorphic and (from Proposition \ref{closure-sLee}) special lcs. By Corollary \ref{twHamVais} they are twisted Hamiltonian.\end{proof}
 \begin{rem} Note that the projection from $\hat M$ to $M$ induces a group homomorphism  $\psi:\hat T\ra T$, whose image contains the flow of $V$, thus it is dense in $T$. We have shown that $\hat T$ is compact, thus $\psi(\hat T)$ is compact and dense, hence $\psi:\hat T\ra T$ is surjective.

  On the other hand, all elements in $\ker\psi$ (which are of finite order) must be induced by the deck transformation group $\hat \Gamma=\pi_1(X)/\Gamma$, which is free abelian. This means that $\psi:\hat T\ra T$ is a Lie group isomorphism.\end{rem}
  We show now that it is a feature of special lcs actions of $G$ to lift to actions of $G$ on the minimal covering (thus $G_0=G$ in Remark \ref{lift-gen}).
\begin{lemma}\label{liftG} Consider a special lcs action of a
connected Lie group $G$ on the lcs manifold $(M,\omega)$. Then this
action lifts to a symplectic action of $G$ on $(\hat
M,\omega_0)$.\end{lemma}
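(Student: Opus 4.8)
The plan is to lift the action in three stages: first infinitesimally, then integrate over the simply connected cover of $G$, and finally descend from that cover back to $G$. Write $\lie g$ for the Lie algebra of $G$ and, for $X\in\lie g$, let $\bar X\in\mathcal X(M)$ be the induced fundamental vector field. Since covering maps are local diffeomorphisms, each $\bar X$ has a unique lift $\hat X\in\mathcal X(\hat M)$ which is $\hat p$-related to $\bar X$, and $X\mapsto\hat X$ is a Lie algebra homomorphism. Because the action is special lcs, by definition each lift is an infinitesimal symplectomorphism of $(\hat M,\omega_0)$, i.e. $\L_{\hat X}\omega_0=0$. Thus we obtain a homomorphism from $\lie g$ into the symplectic vector fields of $(\hat M,\omega_0)$.

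Next I would integrate this infinitesimal action. The fundamental fields $\bar X$ are complete, their flow being $t\mapsto\exp(tX)\cdot(-)$; since $\hat X$ is $\hat p$-related to $\bar X$ and $\hat p$ is a covering, unique path lifting shows that the integral curves of $\hat X$ lift those of $\bar X$ and hence exist for all time, so each $\hat X$ is complete as well. A finite-dimensional Lie algebra of complete vector fields integrates to an action of the associated simply connected Lie group (Palais), so the assignment $X\mapsto\hat X$ integrates to an action of $\tilde G$, the simply connected cover of $G$, on $\hat M$ by symplectomorphisms of $\omega_0$. By construction this $\tilde G$-action is $\hat p$-related to the $G$-action downstairs, in the sense that $\hat p$ intertwines it with the action of $\tilde G$ on $M$ obtained through the covering homomorphism $\pi:\tilde G\ra G$ (this is the lift furnished by Remark \ref{lift-gen}).

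It remains to show that the action descends from $\tilde G$ to $G$, which is the crux of the argument. The kernel $\Lambda:=\ker(\pi:\tilde G\ra G)$ is a discrete central subgroup, isomorphic to $\pi_1(G)$, whose elements act trivially on $M$. Hence each $\delta\in\Lambda$ acts on $\hat M$ covering the identity of $M$, i.e. as a deck transformation of the Galois covering $\hat p:\hat M\ra M$, so $\Lambda$ maps into the deck group $\hat\Gamma=\pi_1(M)/\Gamma_0\cong\Z^k$. On the other hand the $\tilde G$-action preserves $\omega_0$, so every such deck transformation is a symplectomorphism of $\omega_0$. But a deck transformation $\gamma\in\hat\Gamma$ satisfies $\gamma^*\omega_0=e^{-\hat\rho(\gamma)}\omega_0$, and preserving $\omega_0$ forces $\hat\rho(\gamma)=0$; since $\hat\rho:\hat\Gamma\ra\R$ is injective, $\gamma$ is the identity. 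Therefore $\Lambda$ acts trivially on $\hat M$, the $\tilde G$-action factors through $G$, and we obtain the desired symplectic action of $G$ on $(\hat M,\omega_0)$. The main obstacle is precisely this last descent step: integrating the lifted infinitesimal data only produces an action of a cover of $G$, and it is the special lcs (hence genuinely symplectic, not merely homothetic) nature of the lift that collapses the discrepancy, because every nontrivial deck transformation of $\hat M$ strictly rescales $\omega_0$.
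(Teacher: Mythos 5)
Your proposal is correct and follows essentially the same route as the paper: lift the fundamental fields to $\hat M$, integrate to a symplectic action of the universal cover $\tilde G$, and then observe that any element covering the identity of $M$ acts as a deck transformation of $\hat p:\hat M\to M$, which must be trivial because nontrivial deck transformations of the minimal covering are \emph{strict} symplectic homotheties while the lifted action preserves $\omega_0$. The only difference is that you spell out the completeness and Palais integration step, which the paper leaves implicit.
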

\begin{proof} 

  By lifting the (special lcs) fundamental fields of $G$ to the minimal covering $\hat M$, we obtain a symplectic action of $\tilde G$, the universal covering of $G$ on $\hat M$.  Denote by $p:\tilde G\ra G$ the projection. 
  If $p(\tilde g)=1\in G$ for some $\tilde g\in\tilde G$, then $\tilde g$ maps the fibers of $\pi$ to themselves.
  
  But the covering $\pi:\hat M\ra M$ is Galois, with group
$\Gamma$, thus for all $x\in \hat M$ there exists a unique
$\gamma_x\in\Gamma$ such that $\tilde g.x=\gamma_x(x)$. By continuity,
$\gamma_x$ is actually independent of $x$ and thus equal to a single
element $\gamma\in\Gamma$.

All nontrivial elements of $\Gamma$ are
{\em strict} symplectic homotheties, but $x\mapsto \tilde g.x$ is a
symplectomorphism, thus $\gamma=1$ and $\tilde g$ acts trivially on $\hat M$. Thus the action of $\tilde G$ factors to an action of $G$ on $\hat M$ as claimed.\end{proof} 
The above
result has been proven previously for the case of a compact torus in
\cite{istrati_toric} and, in the lcK setting, for compact $G$, in
\cite{MaMoPi}.
\smallskip 

We now recall the map that bundles together all the twisted Hamiltonian functions, in analogy with symplectic geometry:

\begin{defi} Let $G$ be a Lie group that acts on a lcs manifold and
such that every $X \in \lie g$ is twisted Hamiltonian. A map $\mu
\colon M \to \lie g^*$ such that $d_\theta \mu^X = \iota_{\bar X}
\omega$ is a \emph{twisted moment map}.
\end{defi} 

In the symplectic setting, it is known that the moment map of a
Hamiltonian $G$-action on a symplectic manifold is unique up to
constants in $[ \lie g, \lie g]$. In the setting of twisted Hamiltonian actions on strict lcs manifolds, the uniqueness of the twisted moment map follows automatically from Lemma \ref{lem:twisteddiffinj}.
\begin{rem}\label{lift-gen} If a connected Lie group $G$ acts by special lcs
transformations on $(M,\omega)$, then its fundamental fields $\bar X$, for
$X\in\lie g$, lift (we denote their lifts to $M_0$ equally by $\bar
X$) to induce on any presentation $(M_0,\omega_0)$ a symplectic
action of a covering $G_0$ of the group $G$. Proposition \ref{twabel}
implies that the action of $G$ is twisted Hamiltonian, if and only if
the action of $G_0$ is Hamiltonian and admits a (necessarily unique)
$e^{-\rho}$-equivariant  (w.r.t. the action of $\Gamma$ on $M_0$)
moment map $\mu_0:M_0\ra\lie g$. This moment map (that we call {\em the
symplectic moment map} of a twisted Hamiltonian action) is related,
cf. \eqref{Fequiv}, to the twisted moment map $\mu$ of the action of
$G$ on $M$: 
  \begin{equation}\label{mu0mu}\mu_0=e^{-\lambda}\mu\circ
p,\end{equation} where $p:M_0\ra M$ denotes the covering map. We see
thus that, while the twisted moment map depends on the choice of
$\omega$ in its conformal class (and of the Lee form $\theta$), the
symplectic moment map depends only on the lcs structure (i.e., the
conformal class of $\omega$ and, therefore, on the cohomology class of
the Lee form $\theta$).\end{rem}

As in symplectic geometry, we consider the problem of $G$-equivariance
of the moment map. First, we show the $G_0$-equivariance of the
symplectic moment map. 
\begin{prop}\label{eqiv}
Let a connected Lie group $G$ act in twisted Hamiltonian fashion on
$(M,\omega)$.  For every presentation $(M_0,\omega_0)$, if we denote
by $\mu_0:M_0\ra\lie g$ the moment map of the symplectic action of
$G_0$ on $M_0$, then $\mu_0$ is $G_0$-equivariant w.r.t.\ the action on
$M_0$ an the coadjoint action on $\lie g$.\end{prop}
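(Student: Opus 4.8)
The plan is to reduce the assertion to an infinitesimal statement and then to let the homothety action of $\Gamma$ on the presentation do the work. Since $G_0$ is connected, $G_0$-equivariance of $\mu_0\colon M_0\to\lie g^*$ with respect to the coadjoint action is equivalent to its infinitesimal version $\L_{\bar X}\mu_0^Y=\mu_0^{[X,Y]}$ for all $X,Y\in\lie g$; because $\L_{\bar X}\mu_0^Y=\iota_{\bar X}\iota_{\bar Y}\omega_0$, this is in turn equivalent to the comoment map $X\mapsto\mu_0^X$ being a homomorphism for the Poisson bracket of $(M_0,\omega_0)$, i.e.\ to the vanishing of the Chevalley--Eilenberg cochain
$$c(X,Y):=\{\mu_0^X,\mu_0^Y\}-\mu_0^{[X,Y]}.$$
By the standard symplectic identity the two terms of $c(X,Y)$ have the same differential (the Hamiltonian vector field of each is $\pm\overline{[X,Y]}$), so $c(X,Y)$ is a \emph{constant} on the connected manifold $M_0$. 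Everything thus reduces to showing this constant is $0$.

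This is where I would use strictness. The $G_0$-action is the lift of the $G$-action and hence commutes with the deck group $\Gamma$, so $\mu_0$ is $e^{-\rho}$-equivariant in the sense of \eqref{Fequiv} and Remark~\ref{lift-gen}: componentwise $\mu_0^X\circ\gamma=e^{-\rho(\gamma)}\mu_0^X$ for every $\gamma\in\Gamma$ and $X\in\lie g$. On the other hand, $\gamma^*\omega_0=e^{-\rho(\gamma)}\omega_0$ forces the Poisson bivector to scale by $e^{\rho(\gamma)}$, so naturality of the bracket under the homothety $\gamma$ yields, for arbitrary functions $f,g$,
$$\{f\circ\gamma,\,g\circ\gamma\}=e^{-\rho(\gamma)}\big(\{f,g\}\circ\gamma\big).$$
Taking $f=\mu_0^X$, $g=\mu_0^Y$ and using bilinearity of the bracket together with the $e^{-\rho}$-equivariance of $\mu_0$, both $\{\mu_0^X,\mu_0^Y\}$ and $\mu_0^{[X,Y]}$ are seen to transform by the factor $e^{-\rho(\gamma)}$ under $\gamma$, hence so does their difference: $c(X,Y)\circ\gamma=e^{-\rho(\gamma)}c(X,Y)$.

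The conclusion then combines the two properties of $c(X,Y)$. Being constant, $c(X,Y)\circ\gamma=c(X,Y)$; being $e^{-\rho}$-equivariant, $c(X,Y)\circ\gamma=e^{-\rho(\gamma)}c(X,Y)$. Since $M$ is strict, $\rho$ is nontrivial, so some $\gamma$ has $\rho(\gamma)\neq 0$, and $(1-e^{-\rho(\gamma)})c(X,Y)=0$ forces $c(X,Y)=0$. Thus the comoment map is a Lie algebra homomorphism, infinitesimal equivariance holds, and connectedness of $G_0$ upgrades it to genuine $G_0$-equivariance. The one point requiring care — and the main obstacle I anticipate — is the bookkeeping of sign/orientation conventions that guarantees $c(X,Y)$ is genuinely constant (equivalently, that the comoment-homomorphism property is the correct infinitesimal form of equivariance); once the conventions for $\iota_{\bar X}\omega_0=d\mu_0^X$, for the Poisson bracket, and for the fundamental-field anti-homomorphism are fixed consistently, this is classical, and the only genuinely new input is the homothety scaling above, which is precisely what makes strictness do the work. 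One could alternatively deduce the vanishing of $c$ by transporting the automatic twisted-Poisson homomorphism property of $\mu$ on the strict lcs manifold $M$ (cf.\ \eqref{B} and \cite[Section~3]{HallerRybicki}) through the relation $\mu_0=e^{-\lambda}\mu\circ p$ of \eqref{mu0mu}, but the homothety argument seems more self-contained.
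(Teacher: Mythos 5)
Your argument is correct and follows essentially the same route as the paper's proof: reduce to the infinitesimal statement, observe that the discrepancy $\bar X.\mu_0^Y-\mu_0^{[X,Y]}$ (your $c(X,Y)$, since $\{\mu_0^X,\mu_0^Y\}=\bar X.\mu_0^Y$) is a constant because both terms have the same differential, and then kill that constant using its $e^{-\rho}$-equivariance together with the nontriviality of $\rho$ on a strict lcs manifold. The only cosmetic difference is that you derive the $e^{-\rho}$-equivariance of the bracket term from the scaling of the Poisson structure under $\Gamma$, whereas the paper gets it directly by differentiating the equivariance of $\mu_0^Y$ along the $\Gamma$-invariant field $\bar X$; both computations are sound.
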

\begin{proof} As $G$ and its covering $G_0$ are connected, the
  equivariance of $\mu_0$ holds if and only if
\begin{equation}\label{eqv}\bar X.(\mu_0)_Y=(\mu_0)_{[X,Y]}\end{equation}
  for all $X,Y\in \lie g$.
To prove this, we show first that the left hand side is a Hamiltonian
for the fundamental field defined by $[X,Y]$ on $M_0$, in other words,
that the difference to the right hand side is a constant:
$$d(\bar X.(\mu_0)_Y)=d(\omega_0(\bar Y,\bar X))=\L_{\bar
  X}(\omega_0(\bar Y,\cdot))=\omega_0([\bar X,\bar
Y],\cdot)=d(\mu_0)_{[X,Y]},$$
because $\omega_0$ is closed and $\bar X$-invariant. Moreover, $\bar
X.(\mu_0)_Y$ is again $e^{-\rho}$-equivariant, thus it coincides with
$(\mu_0)_{[X,Y]}$.  Thus $X.\mu_0=\mbox{ad}_X^*(\mu_0)$.\end{proof}
Let us now discuss the $G$-equivariance of the twisted
moment map.
\begin{prop} \label{equivmmap} Let $(M, \omega, \theta)$ be a strict
lcs manifold  and $G$ be a connected Lie group acting in twisted
Hamiltonian fashion on $M$. Then the following conditions are
equivalent:
\begin{enumerate}
\item The twisted moment map $\mu: M\ra\lie g^*$ is $G$-equivariant,
i.e.,
$$\mu(g\cdot x)= \mathrm{Ad}_{g^{-1}}^*(\mu(x))$$
for all $g\in G$.
\item $\omega$ is $G$-invariant. 
\item $\theta(\bar X)=0$ for all $X\in \lie g$.
\end{enumerate}
\end{prop}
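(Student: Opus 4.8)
The plan is to push everything to the infinitesimal level and then isolate a single identity that measures the failure of equivariance. Since $G$ is connected, each of the three conditions is equivalent to its generator-wise version, so I would work throughout with the fundamental fields $\bar X$, $X\in\lie g$. The first thing to record is that, the action being twisted Hamiltonian and hence special lcs, Proposition \ref{spectheta} (equation \eqref{leespec}) gives $\L_{\bar X}\omega=\theta(\bar X)\,\omega$ for every $X$. The equivalence $(2)\Leftrightarrow(3)$ is then immediate: as $G$ is connected, $\omega$ is $G$-invariant iff $\L_{\bar X}\omega=0$ for all $X$, and since $\omega$ is nondegenerate, hence nowhere zero, this holds iff $\theta(\bar X)=0$ for all $X$.

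The heart of the argument is the identity
\begin{equation*}
\bar X\cdot\mu^Y=\mu^{[X,Y]}+\theta(\bar X)\,\mu^Y,\qquad X,Y\in\lie g,
\end{equation*}
which I would obtain by applying $\L_{\bar X}$ to the defining relation $d_\theta\mu^Y=\iota_{\bar Y}\omega$. On the right, Cartan's formula together with $\L_{\bar X}\omega=\theta(\bar X)\omega$ gives $\L_{\bar X}\iota_{\bar Y}\omega=\iota_{[\bar X,\bar Y]}\omega+\theta(\bar X)\iota_{\bar Y}\omega=d_\theta\mu^{[X,Y]}+\theta(\bar X)\,d_\theta\mu^Y$, where I adopt the convention $\overline{[X,Y]}=[\bar X,\bar Y]$ of Proposition \ref{eqiv}. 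On the left I must be careful: because $\theta(\bar X)$ need not be constant, $\L_{\bar X}\theta=d(\theta(\bar X))$ does not vanish, so $\L_{\bar X}$ does not commute with $d_\theta$. Tracking the extra term yields $\L_{\bar X}(d_\theta\mu^Y)=d_\theta(\bar X\cdot\mu^Y)-\mu^Y\,d(\theta(\bar X))$, and the key point is that $\theta(\bar X)\,d_\theta\mu^Y+\mu^Y\,d(\theta(\bar X))$ reassembles exactly into $d_\theta(\theta(\bar X)\,\mu^Y)$. Comparing the two sides gives $d_\theta(\bar X\cdot\mu^Y)=d_\theta(\mu^{[X,Y]}+\theta(\bar X)\mu^Y)$, and the identity follows from the injectivity of $d_\theta$ on functions of a strict lcs manifold (Lemma \ref{lem:twisteddiffinj}).

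With this identity the remaining implications are short. For $(3)\Rightarrow(1)$, the hypothesis $\theta(\bar X)=0$ collapses the identity to $\bar X\cdot\mu^Y=\mu^{[X,Y]}$, the infinitesimal coadjoint equivariance of $\mu$; as $G$ is connected this integrates to the equivariance in $(1)$. For $(1)\Rightarrow(3)$, equivariance forces $\theta(\bar X)\,\mu^Y\equiv0$ for all $X,Y$, and here I would argue locally rather than assume $\theta(\bar X)$ constant: on the open set $U=\{\theta(\bar X)\neq0\}$ we must have $\mu^Y\equiv0$ for every $Y$, hence $\iota_{\bar Y}\omega=d_\theta\mu^Y=0$ on $U$, so by nondegeneracy of $\omega$ every $\bar Y$, in particular $\bar X$, vanishes on $U$; but then $\theta(\bar X)=0$ on $U$, contradicting the definition of $U$ unless $U=\emptyset$. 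Thus $\theta(\bar X)\equiv0$ for all $X$, which is $(3)$. I expect the main obstacle to be the second paragraph: pinning down the master identity in the presence of a nonconstant $\theta(\bar X)$, i.e.\ verifying that the noncommutation of $\L_{\bar X}$ with $d_\theta$ cancels precisely against the term $\theta(\bar X)\,\iota_{\bar Y}\omega$, rather than any of the subsequent deductions.
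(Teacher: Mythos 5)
Your proof is correct, but it takes a genuinely different route from the paper's. The paper disposes of $(1)\Leftrightarrow(2)$ by lifting to a presentation: the symplectic moment map $\mu_0$ is always $G_0$-equivariant (Proposition \ref{eqiv}), and the relation $\mu_0=e^{-\lambda}\mu\circ p$ of \eqref{mu0mu} then transfers equivariance of $\mu$ into invariance of $\lambda$ along orbits, i.e.\ into $\theta(\bar X)=0$; the equivalence $(2)\Leftrightarrow(3)$ is handled exactly as you do, via $\L_{\bar X}\omega=\theta(\bar X)\omega$. You instead stay entirely on $M$ and derive the master identity $\bar X\cdot\mu^Y=\mu^{[X,Y]}+\theta(\bar X)\mu^Y$ from the twisted Cartan calculus plus the injectivity of $d_\theta$ (Lemma \ref{lem:twisteddiffinj}); your computation checks out, including the reassembly of $\theta(\bar X)\,d_\theta\mu^Y+\mu^Y\,d(\theta(\bar X))$ into $d_\theta(\theta(\bar X)\mu^Y)$, which is in effect the downstairs shadow of the computation the paper performs upstairs in Proposition \ref{eqiv}. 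What your approach buys is a self-contained argument that quantifies the failure of equivariance as the explicit defect term $\theta(\bar X)\mu^Y$, and a $(1)\Rightarrow(3)$ step that correctly does not assume $\theta(\bar X)$ constant (your open-set argument on $U=\{\theta(\bar X)\neq 0\}$ is the right way to handle this for a general connected $G$; constancy is only available after imposing commutation with the {\em s}-Lee field, as in Proposition \ref{Tequiv}). What the paper's route buys is brevity, since Proposition \ref{eqiv} is already established and the covering picture is needed elsewhere. The only caveat is the usual sign bookkeeping between $\overline{[X,Y]}$ and $[\bar X,\bar Y]$, which you flag and which is consistent with the convention the paper itself uses.
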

\begin{proof} The equivalence of the first two conditions follows from
  the equivariance of $\mu_0$ (Proposition \ref{eqiv}) and equation
  \eqref{mu0mu}, see also \cite[Proposition 2]{HallerRybicki} for
  another proof. 

The equivalence of the second and
third condition is clear because by Proposition \ref{spectheta} we
have $\L_{\bar X}\omega = \theta(\bar X)\omega$ for the fundamental
vector fields $X$ of the action.
\end{proof} If the twisted moment map is $G$-equivariant, then we call
the $G$-action {\em strongly twisted Hamiltonian}.

\begin{rem} In \cite{HallerRybicki}, the authors use a different,
weaker definition of $G$-equivariance:
  $$\mu(g\cdot x)=a_g \mathrm{Ad}_{g^{-1}}^*(\mu(x))$$
for all $g\in G$, where $a_g\in (0,\infty)$ also occurs as a factor in
the conformal $G$-invariance of $\omega$:
$g^*\omega=a_g^{-1}\omega$. We would rather call this notion {\em
conformal $G$-equivariance}, and, in fact, it is shown already in \cite{HallerRybicki} to be
equivalent to \eqref{B} and thus to hold automatically on every strict
lcs manifold, \cite[Proposition 2]{HallerRybicki}. 
\end{rem}  
For a given twisted Hamiltonian action of a compact
group $G$ on $(M,\omega)$, one can integrate over $G$ (for a
bi-invariant volume form on $G$) the function
$g\mapsto g^*\omega$ and get a conformally equivalent $2$-form
$\omega'$ which is $G$-invariant. (Recall that $g^*\omega$ is conformal to $\omega$ by definition of an lcs transformation.) {\em A priori}, one may  lose the Lee type property in the process. The following shows that this is not the case if $G$ is a torus:

\begin{prop}\label{Tequiv} Let $T$ be a torus acting on a strict lcs manifold $(M,\omega)$ by special lcs transformations and suppose the action is of Lee type. Then $\omega$ is $T$-invariant.\end{prop}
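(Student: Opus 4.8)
The plan is to reduce the statement to a pointwise scalar identity and then close the argument by an integration (compactness) trick. Writing $\bar X$ for the fundamental vector field of $X\in\lie t$, recall that by Proposition \ref{spectheta} the special lcs hypothesis is equivalent to $\L_{\bar X}\omega=\theta(\bar X)\,\omega$ for every $X\in\lie t$. Since $\omega$ is nowhere vanishing, $T$-invariance of $\omega$ (that is, $\L_{\bar X}\omega=0$ for all $X$) is therefore \emph{equivalent} to the vanishing of the functions $a_X:=\theta(\bar X)$ for all $X\in\lie t$. So the entire problem becomes: show $a_X\equiv 0$.

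The key computation uses the Lee type hypothesis together with commutativity of $T$. Lee type says the $s$-Lee field $V$, characterized by $\iota_V\omega=-\theta$, equals $\bar{X_0}$ for some $X_0\in\lie t$; hence $a_X=\theta(\bar X)=\omega(\bar X,V)=\omega(\bar X,\bar{X_0})$. First I would differentiate this scalar along another fundamental field $\bar Y$. Because $T$ is abelian all fundamental fields commute, $[\bar Y,\bar X]=0$, so the Leibniz rule for the Lie derivative of the contraction collapses and, invoking the special lcs identity once more, $\L_{\bar Y}a_X=(\L_{\bar Y}\omega)(\bar X,\bar{X_0})=a_Y\,\omega(\bar X,\bar{X_0})=a_Y\,a_X$. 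Specializing to $Y=X$ yields the sign-definite relation $\bar X(a_X)=a_X^2\ge 0$.

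To finish, I would integrate this identity over $M$ against a $T$-invariant volume form $\nu$, which exists because $T$ is compact (average any volume form, or take $\nu=(\omega')^n$ for the invariant form $\omega'=\int_T g^*\omega\,dg$, conformal to $\omega$ with positive factor). Since $\bar X$ is a fundamental field it preserves $\nu$, so by Cartan's formula and Stokes' theorem $\int_M(\L_{\bar X}a_X)\,\nu=\int_M\L_{\bar X}(a_X\nu)=\int_M d\,\iota_{\bar X}(a_X\nu)=0$. Hence $\int_M a_X^2\,\nu=0$, forcing $a_X\equiv 0$ for every $X\in\lie t$; therefore $\L_{\bar X}\omega=0$ and $\omega$ is $T$-invariant.

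The hard part is conceptual rather than computational: recognizing that the Lee type condition is exactly what allows one to rewrite the conformal weight $\theta(\bar X)$ as the scalar $\omega(\bar X,\bar{X_0})$, which in turn produces the sign-definite equation $\bar X(\theta(\bar X))=\theta(\bar X)^2$ once commutativity is used. Without Lee type one loses this representation, and the averaging recalled in the paragraph preceding the statement only yields an invariant form in a possibly different conformal gauge, which need not return the original $\omega$. The torus hypothesis enters twice: through commutativity of the fundamental fields, and through the existence of the invariant volume form.
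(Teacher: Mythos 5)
Your argument is correct on a \emph{compact} $M$, and its computational core is really the same identity as the paper's, just contracted: the paper derives the full one-form identity $d(\theta(\bar X))=\L_{\bar X}\theta=-\iota_V\L_{\bar X}\omega=\theta(\bar X)\,\theta$ (using $d\theta=0$, $[\bar X,V]=0$ and the special lcs relation), of which your $\L_{\bar Y}a_X=a_Ya_X$ is the evaluation on fundamental fields. Where the two proofs genuinely diverge is the endgame. The paper feeds $d(a_X)=a_X\theta$ into Lemma \ref{lem2.1}, which uses only that $\theta$ is closed and \emph{non-exact} (i.e.\ the strictness hypothesis) plus connectedness; no compactness and no integration are needed, and the strictness assumption in the statement is exactly what gets consumed. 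You instead close with $\int_M a_X^2\,\nu=0$ via Stokes, which never uses strictness but does require $M$ to be closed --- a hypothesis that is not in the statement of the proposition (the paper adds ``compact'' explicitly elsewhere when it is needed, e.g.\ in Proposition \ref{closure-sLee}). So as written there is a small gap in the non-compact case. It is easily repaired from your own identity without any integration: $\bar X(a_X)=a_X^2$ says that along each integral curve of $\bar X$ the function $a_X$ solves $\dot a=a^2$; since $\bar X$ is a fundamental field of a torus action its flow is complete, and the only solution of $\dot a=a^2$ defined for all $t\in\R$ is $a\equiv 0$. With that replacement your proof works in the same generality as the paper's, and it is interesting in that it trades the strictness hypothesis for completeness of the flow, whereas the paper's Lemma \ref{lem2.1} route trades completeness for strictness.
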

\begin{proof} By Proposition \ref{equivmmap} it suffices to show that $\theta(\bar X)=0$ for all $X\in \lie t$. We have $[\bar X,V]=0$ for all $X\in \lie t$, where $V$ is the {\em s}-Lee vector field. We write also
\begin{equation}\label{dthetaX}  d(\theta(\bar X))=\L_{\bar X}\theta=-\L_{\bar X}(\omega(V,\cdot))=-(\L_{\bar X}\omega)(V,\cdot)=\theta(\bar X)\theta.
\end{equation}
This implies $\theta(\bar X)=0$ by the following lemma (see also the argument
from the proof of \cite[Lemma 2.1]{MaMoPi})
\end{proof}
\begin{lemma}\label{lem2.1}
Let $N$ be a connected manifold, $\eta$ a closed, non-exact form on
$N$, and $f:N\ra\R$ a function satisfying $df=f\eta$ on $N$. Then
$f\equiv 0$.\end{lemma}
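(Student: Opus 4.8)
The plan is to read the equation $df=f\eta$ as the infinitesimal form of a linear transport (first-order ODE) equation, and to exploit the resulting uniqueness to show that the zero set of $f$ is simultaneously open and closed. First I would restrict the equation to paths: given any smooth curve $\gamma$ in $N$, the composition $g:=f\circ\gamma$ satisfies the scalar linear ODE $g'(t)=g(t)\,\eta(\gamma'(t))$, whose solution is $g(t)=g(0)\exp(\int_0^t \eta(\gamma'(s))\,ds)$. The immediate and decisive consequence is that $f$ vanishes somewhere along $\gamma$ if and only if it vanishes everywhere along $\gamma$; in particular a single zero of $f$ propagates along every curve through that point.

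Next I would use this propagation to analyze the zero set $Z:=f^{-1}(0)$. It is closed by continuity of $f$. To see that it is also open, I would pick $p\in Z$ and a coordinate ball around it, join $p$ to any nearby point by a straight segment in the chart, and apply the observation above to conclude that $f$ vanishes at that point as well; hence a whole neighbourhood of $p$ lies in $Z$. Since $N$ is connected and $Z$ is both open and closed, either $Z=N$ or $Z=\emptyset$.

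If $Z=N$ we are done, as then $f\equiv 0$. Otherwise $f$ is nowhere zero, hence of constant sign on the connected manifold $N$, so $\log|f|$ is a globally well-defined smooth function satisfying $d\log|f|=df/f=\eta$. This exhibits $\eta$ as an exact form, contradicting the hypothesis that $\eta$ is non-exact, so this case cannot occur and we conclude $f\equiv 0$.

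There is no deep obstacle here; the crux is simply to set up the propagation-of-zeros argument cleanly and to justify the openness of $Z$ via local connectedness in a chart. The one place where the hypotheses are genuinely used is the final step: the nowhere-vanishing alternative is ruled out \emph{precisely} by the non-exactness of $\eta$, so this is where that assumption must be invoked.
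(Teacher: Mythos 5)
Your proof is correct, but it takes a genuinely different route from the paper's. The paper passes to the universal covering $p:\tilde N\to N$, where $p^*\eta=d\beta$ is exact, observes that $d(e^{-\beta}\,f\circ p)=0$, hence $f\circ p=k e^{\beta}$ for a constant $k$, and then kills $k$ by noting that $f\circ p$ is $\pi_1(N)$-invariant while $e^{\beta}$ is not (precisely because $\eta$ is not exact). You instead stay entirely on $N$: uniqueness for the scalar linear ODE $g'=g\,\eta(\gamma')$ along curves shows that zeros of $f$ propagate, so $f^{-1}(0)$ is open and closed, and the nowhere-vanishing alternative is excluded because $\log|f|$ would then be a global primitive of $\eta$. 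The two arguments establish the same dichotomy (either $f\equiv 0$ or $f$ is nowhere zero) and both invoke non-exactness only to rule out the second branch; yours does so by exhibiting a primitive on $N$ directly, the paper's by the failure of equivariance of $e^{\beta}$. What your version buys is self-containedness -- no covering-space machinery, only ODE uniqueness and connectedness -- while the paper's version is shorter because it reuses the primitive-on-the-cover formalism already set up for presentations of lcs structures, and it gives the explicit description $f\circ p=k e^{\beta}$ of all solutions as a byproduct. Both proofs are complete and correct.
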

\begin{proof}Let $\beta:\tilde N\ra\R$ such that $d\beta=p^*\eta$, for
  $p:\tilde N\ra N$ the universal covering on $N$. Then we have
$$d(e^{-\beta}f\circ p)=e^{-\beta}(-d\beta f\circ p+df\circ p)=0$$
on $\tilde N$, thus $e^{-\beta}f\circ p=k$ is constant on $\tilde N$. This
implies $f\circ p=ke^\beta$, but $\beta$ and $e^\beta$ are not
$\pi_1(N)$-invariant (otherwise $\eta$ would be exact), so the only
possibility is $k=0$.\end{proof}

\begin{corol} A twisted Hamiltonian action of Lee type of a torus $T$ has a $T$-equivariant twisted moment map.\end{corol} 


As we are interested in actions of compact groups,
mainly in torus actions, we may consider actions that are are extended
by continuity from
actions of some dense subgroups (like a torus action can be the closure of the
flow of one vector field). We show the following:
\begin{prop}\label{compactify} Let $H\subset G$ be a dense subgroup in a compact Lie group $G$ and assume that $H$ acts by special lcs
  transformations on a compact lcs manifold $(M,\omega)$ of lcs rank $1$. Then $G$ acts by special lcs transformations of
  $(M,\omega)$. \end{prop}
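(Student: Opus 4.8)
The plan is to first promote the given $H$-action to a smooth $G$-action, and then to detect special-ness of the whole $G$-action through a single conformal invariant taking values in a circle, where the rank-one hypothesis is decisive. Throughout I work on the minimal covering $\hat M$, on which every \emph{special lcs} transformation $\phi$ lifts to a genuine symplectomorphism of $(\hat M,\omega_0)$ (the transformation-level analogue of Lemma~\ref{liftG}: one chooses the lift whose conformal factor equals $1$), and I use that the deck group is $\Gamma\simeq\Z$ with $\rho(\Gamma)=a\Z$ for some $a>0$, so that the primitive $\lambda\colon\hat M\ra\R$ of the Lee form is proper by Proposition~\ref{rk1}.

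To extend the action I would argue that, since $G$ is compact with $H\subset G$ dense, it suffices to show the family $\{\phi_h\}_{h\in H}\subset\mathrm{Diff}(M)$ is relatively compact (equicontinuous with controlled derivatives); then $\phi_g:=\lim_n\phi_{h_n}$ for $h_n\to g$ is well defined, independent of the approximating sequence, and assembles into a smooth $G$-action restricting to the given one on $H$. The natural route is to lift to $\hat M$, where each $\hat\phi_h$ is a symplectomorphism of $\omega_0$ and, from $p^*\omega=e^\lambda\omega_0$, satisfies $\lambda\circ\hat\phi_h-\lambda=\log(F_h\circ p)$ with $\phi_h^*\omega=F_h\omega$; properness of $\lambda$ in rank one (Proposition~\ref{rk1}) then confines these lifts to a fixed compact region once one controls the conformal factors. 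Obtaining this control \emph{uniformly} in $h\in H$, and upgrading confinement to genuine equicontinuity, is the step I expect to be the main obstacle.

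Once $G$ acts, it acts by lcs transformations, since ``$g^*\omega$ is pointwise proportional to $\omega$'' is a closed condition on $G$ that holds on the dense subgroup $H$. Now to each lcs transformation $\phi$ I attach its lifted conformal factor: choosing a lift $\hat\phi$ to $\hat M$ one has $\hat\phi^*\omega_0=c_\phi\,\omega_0$ with $c_\phi\in\R_+^*$ constant (the argument of Proposition~\ref{aX}: nondegeneracy of the symplectic $\omega_0$ forces $dc_\phi=0$). A different lift differs by a deck transformation and multiplies $c_\phi$ by $e^{-\rho(\gamma)}$, so $$\sigma(\phi):=[c_\phi]\in \R_+^*/e^{-\rho(\Gamma)}$$ is well defined, and $\phi\mapsto\sigma(\phi)$ is a continuous group homomorphism; by construction $\phi$ is special lcs precisely when $\sigma(\phi)$ is trivial. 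Here the rank-one hypothesis enters decisively: $\rho(\Gamma)=a\Z$ is discrete, so the target $\R_+^*/e^{-a\Z}\simeq S^1$ is a compact Hausdorff group. Since $H$ acts by special lcs transformations we have $\sigma|_H\equiv 1$, and as $H$ is dense in $G$ while $\sigma$ is continuous into the Hausdorff group $S^1$, it follows that $\sigma\equiv 1$ on all of $G$, i.e.\ every $g\in G$ is special lcs, as claimed. (For rank at least two, $\rho(\Gamma)$ and hence $e^{-\rho(\Gamma)}$ would be dense in $\R_+^*$, the quotient would fail to be Hausdorff, and this final density argument would collapse, which explains the rank-one hypothesis.)
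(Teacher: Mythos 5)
Your argument is correct, but it reaches the conclusion by a genuinely different mechanism than the paper's. You package the obstruction to special-ness into a single continuous group homomorphism $\sigma\colon G\ra \R_+^*/e^{-\rho(\Gamma)}$ recording the conformal factor of lifts to the minimal covering, and you use that in rank one the target is a Hausdorff circle, so $\sigma|_H\equiv 1$ propagates to all of $G$ by density. The paper instead first averages $\omega$ over $G$ to make it $G$-invariant, so that $\theta(\bar X)$ is constant for every $X\in\lie g$ and, by Proposition \ref{spectheta}, $\bar X$ is special exactly when $\theta(\bar X)=0$; it then invokes the circle-valued primitive $\bar\lambda\colon M\ra S^1$ of the Lee form from Lemma \ref{circl}, which is constant on $H$-orbits and hence, by continuity, on $G$-orbits. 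Both proofs use the rank-one hypothesis in the same essential way -- to obtain a Hausdorff circle as the quotient of $\R$ (or $\R_+^*$) by $\rho(\Gamma)$ -- but you run the density argument on the group $G$ itself rather than on the orbits inside $M$, and you avoid the averaging step entirely; this is legitimate because special-ness is conformally invariant (the symplectic form $\omega_0$ on a presentation is unchanged under $\omega\mapsto e^f\omega$), so nothing is lost either way. Your explanation of why the argument collapses for rank $\geq 2$ (the quotient becomes non-Hausdorff) is exactly the role Lemma \ref{circl} plays in the paper.

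Concerning the step you single out as the main obstacle -- equicontinuity of $\{\phi_h\}_{h\in H}$ needed to manufacture a $G$-action: the paper does not prove this either, dismissing it as clear. The intended reading, signalled by the sentence preceding the proposition (``a torus action can be the closure of the flow of one vector field''), is that $G$ arises as the closure of $H$ inside $\mathrm{Diff}(M)$ or an isometry group, so that $G$ acts tautologically and the only thing to check is that the lcs condition is closed under $C^1$-limits, which you do verify. Under that reading your proof is complete.
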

\begin{proof} It is clear that the action can be extended to $G$ and
  that this action is lcs (here we need that $M$ is compact). 
  Possibly after integrating $g^*\omega$ over 
  $G$ as described above, we can suppose that $\omega$ is
  $G$-invariant, thus $\L_{\bar X}\theta=0$, i.e., $\theta(\bar X)$ is 
  constant for any $X\in\lie
  g$. Moreover, if $X\in \lie h$, then $\theta(\bar X)=0$ from
  Proposition \ref{spectheta}. 

But this is equivalent, cf.\ Lemma \ref{circl}, to $d\bar\lambda(\bar X)=0$, where we recall that $\bar\lambda:M\ra S^1$ is induced by a
  primitive $\lambda:\hat M\ra \R$ of $\theta$, in the case of lcs
  rank $1$. This means that the function $\bar\lambda$ is constant along the $H$-orbits; by continuity it is constant along the $G$-orbits. Hence $\theta(\bar X) = 0$ for all $X\in \lie g$ which shows that the $G$-action is special lcs by Proposition \ref{spectheta}. 
\end{proof}
\begin{rem} We could have tried to apply the strategy from Proposition \ref{closure-sLee} to show that the action of $G$ lifts to $\hat M$, and possibly drop the restriction on the lcs rank.

  For this, one would first lift the action of $H$ to $\hat M$ (this is possible, see Lemma \ref{liftG}), then one would need to show that its orbits are included in the fibers of the proper map $\bar F:\bar M\ra H_1(M,\R)$, thus are relatively compact. This was achieved in Proposition \ref{closure-sLee} because $*V^\flat$ is exact (for $V$ the {\em s}-Lee vector field), but if we consider in \eqref{star} an arbitrary vector field $\bar X$, for $X\in\lie g$, then $*X^\flat$ is not necessarily exact, therefore the map $\bar F$ might not necessarily be constant along the $H$-orbits and the method fails.
\end{rem}
\begin{rem}\label{close-twHam} The property of being twisted Hamiltonian does not
  necessarily extend from a dense (disconnected) subgroup $H$ to $G$: if, for example $H\simeq \Z$ is dense in a circle $S^1$ acting on an Inoue surface
  by special lcK transformations (see \cite[Example 5.17]{otiman}),
  then the action of the discrete group $H$ is trivially twisted
  Hamiltonian, while the action of $S^1$ is not.

However, if we consider the special case where $H$ is a group of lcK transformations on a compact Vaisman manifold of rank $1$, then $G$ is also special lcK and thus, by Corollary \ref{twHamVais}, twisted Hamiltonian. \end{rem}
\subsection{Remarks concerning the lcK setting}\label{lcK} An lcK transformation
is conformal and biholomorphic, or, equivalently, lcs and
biholomorphic. In particular, all the above types of vector fields
(lcs, special lcs and twisted Hamiltonian) can be encountered in lcK
geometry, but they are lcK if and only if they are, additionally,
holomorphic (as usual, we say that a real vector field $X$ is holomorphic
if $\L_XJ=0$).

Moreover, a group $G$ acts on $(M,\omega,J)$ by lcK transformations
(possibly special lcK or twisted Hamiltonian lcK) if, additionally to the
conditions considered above in the lcs setting, $G$ acts by biholomorphisms of
$(M,J)$. This is a serious restriction for actions of Lee type,
because the Lee field needs to be holomorphic. This holds for Vaisman
structures, but there are very few non-Vaisman lcK examples with
holomorphic Lee field, see \cite{Mor2Ornea}. 

On the other hand, the lcK action of a group $G$ on a compact lcK
manifold $(M,\omega,J)$ can always be compactified:
$G$, acting by conformal biholomorphisms on $(M,J,g)$, acts by
isometries with respect to the Gauduchon metric, thus the closure $\bar G$ of $G$ in the (compact) group of isometries of this metric is compact. The Lee type
condition (although rare, see \cite{Mor2Ornea}) clearly passes to $\bar
G$, but it is not automatic that the action of $\bar G$ is still
special or twisted Hamiltonian (if the action of $G$  was of this kind), see, however, Corollary \ref{twHamVais} and Remark \ref{close-twHam} above.




\section{Convexity of the twisted moment map}\label{sec:5}

The goal of this section is to prove an lcs analog of the classical
symplectic convexity theorem of Atiyah~\cite{atiyah_conv} and
Guillemin-Sternberg~\cite{techniques}:

\begin{thm} \label{thm:convmu} Consider a twisted Hamiltonian action
of Lee type of a compact torus $T$ on a compact lcs manifold $(M,
\omega, \theta)$, with twisted moment map $\mu \colon M \rightarrow
\lie t^*$. Then $\mu:M\rightarrow \mu(M)$ is an open map with
connected level sets, and convex image.
\end{thm}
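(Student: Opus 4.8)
The plan is to deduce the statement from the \emph{local-global principle} for proper maps \cite{HilgertNeebPlank, BjoKar}, exactly as in the alternative proof of the symplectic convexity theorem. The decisive advantage of the twisted moment map $\mu$ over the symplectic moment map is that $M$ is compact, so $\mu$ is automatically proper; properness is the one global hypothesis the principle requires. It then remains to supply the local data: that $\mu$ is locally open onto its image, has locally connected fibers, and has locally convex image near every point. Before doing so I record two reductions. First, the action being twisted Hamiltonian is in particular special lcs, so since $T$ is a torus acting of Lee type, Proposition~\ref{Tequiv} gives that $\omega$ is $T$-invariant; hence by Proposition~\ref{equivmmap} the map $\mu$ is $T$-equivariant, and as $T$ is abelian the coadjoint action is trivial, so $\mu$ is in fact $T$-invariant. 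Second, writing $V=\bar X_0$ for the \emph{s}-Lee field, which is fundamental by the Lee type assumption, Remark~\ref{twHam1} shows $\mu^{X_0}\equiv 1$; thus $\mu$ takes values in the affine hyperplane $H:=\{\xi\in\lie t^*\ |\ \langle\xi,X_0\rangle=1\}$.

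To produce the local data I would pass to the minimal covering. By Lemma~\ref{liftG} the action lifts to a genuine symplectic (Hamiltonian) $T$-action on $(\hat M,\omega_0)$, with symplectic moment map $\mu_0$, and by Remark~\ref{lift-gen} together with \eqref{mu0mu} one has $\mu_0=e^{-\lambda}\,\mu\circ\hat p$, where $\lambda$ is the global primitive of $\hat p^*\theta$. Evaluating on $X_0$ gives $\langle\mu_0,X_0\rangle=e^{-\lambda}>0$, so $\mu_0$ takes values in the open half-space $\{\langle\cdot,X_0\rangle>0\}$, bounded away from the origin. Consequently
\[
\mu\circ\hat p=\pi_H\circ\mu_0,\qquad \pi_H(\xi):=\frac{\xi}{\langle\xi,X_0\rangle},
\]
where $\pi_H$ is the central (radial) projection of the positive half-space onto $H$ from the origin. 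This is the key mechanism: $\mu$ is simply the symplectic moment map of the lifted action, rescaled radially onto the slice $\{\mu^{X_0}=1\}$.

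The local properties of $\mu$ now follow from those of $\mu_0$. Locally $\hat p$ is a diffeomorphism, and the standard equivariant local normal form for a Hamiltonian torus action shows that $\mu_0$ is locally open onto its image, that this local image is convex, and that the local fibers of $\mu_0$ are connected. The map $\pi_H$ is open and smooth on the half-space, and it carries segments to segments (a direct computation shows that $\pi_H((1-t)p+tq)$ traces the segment $[\pi_H(p),\pi_H(q)]$ as $t$ runs over $[0,1]$), hence maps convex subsets of the half-space to convex sets; composing, $\mu$ is locally open with locally convex image. For the fibers, $(\mu\circ\hat p)^{-1}(c)=\mu_0^{-1}(\R_{>0}c)$, and intersecting the ray $\R_{>0}c$ with the convex local image of $\mu_0$ yields a connected interval $I_c$; since an open map with connected fibers pulls back connected sets to connected sets (the restriction $\mu_0\colon\mu_0^{-1}(I_c)\to I_c$ is still open onto $I_c$ with the same connected fibers), $\mu_0^{-1}(I_c)$, and therefore the local fiber of $\mu$, is connected.

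With $M$ compact and connected, $\mu$ proper, and local openness, local convexity and local fiber-connectedness in hand, the local-global principle yields that $\mu\colon M\to\mu(M)$ is open, has connected level sets, and has convex image. I expect the main obstacle to be essentially bookkeeping rather than conceptual: assembling the output of the symplectic normal form into precisely the axiomatic local convexity data demanded by the formulation of the principle in \cite{HilgertNeebPlank, BjoKar}, and verifying that the radial projection $\pi_H$ interacts correctly with these axioms. The genuinely new input beyond the symplectic case is the replacement of the non-proper symplectic moment map $\mu_0$ on the non-compact $\hat M$ by its radial projection $\mu$ on the compact $M$, which restores properness; everything else is local and reduces to classical symplectic geometry.
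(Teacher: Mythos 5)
Your proposal is correct and follows essentially the same route as the paper: properness of $\mu$ from compactness of $M$, lifting to a Hamiltonian action on the minimal covering via Lemma~\ref{liftG}, the relation $\mu\circ\hat p=\Psi\circ\hat\mu$ with $\Psi$ the central projection onto the affine hyperplane $\{\alpha(Y)=1\}$ (your $\pi_H$), transfer of local openness and local convexity from the symplectic normal form through $\Psi$, and the local-global principle. The only cosmetic difference is that you spell out local fiber connectedness separately, which the paper absorbs into the notion of convexity of maps used in its formulation of the local-global principle.
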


In order to prove this theorem we will use the
\emph{local-global-principle} \cite{HilgertNeebPlank}, see also
\cite{BirteaOrtegaRatiu, BjoKar}. In a form suitable for our purposes
it reads
\begin{thm}[Local-global principle] \label{thm:locglob} Let $X$ be a
connected Hausdorff space and $\mu \colon X \rightarrow \R^k$ a proper
map. If every $x \in X$ admits a neighborhood $U$ such that $\mu|_U :
U \rightarrow \mu(U)$ is open and convex, then $\mu \colon X
\rightarrow \mu(X)$ is open and convex.
\end{thm}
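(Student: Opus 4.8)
The plan is to treat Theorem~\ref{thm:locglob} as a purely topological statement and to prove the three assertions bundled into the conclusion — that $\mu\colon X\to\mu(X)$ is open, has connected fibers, and has convex image — following the Tietze--Nakajima strategy of \cite{BjoKar, HilgertNeebPlank}. The first observation is that properness does the heavy global work: a proper map from a Hausdorff space into the locally compact space $\R^k$ is closed, so $\mu$ is a closed map and in particular $\mu(X)$ is a closed subset of $\R^k$; since $X$ is connected, $\mu(X)$ is connected as well. By the classical Tietze--Nakajima theorem, a closed connected subset of $\R^k$ that is \emph{locally convex} (each of its points has a Euclidean neighborhood meeting it in a convex set) is convex. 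Thus convexity of the image will follow once $\mu(X)$ is known to be locally convex, and this is precisely where the local hypothesis and global openness must cooperate.

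First I would upgrade the local openness to global openness of $\mu\colon X\to\mu(X)$. The point is to show that for a local datum $U$ (on which $\mu|_U$ is open onto the convex set $\mu(U)$ with connected fibers) the set $\mu(U)$ is a genuine neighborhood of $\mu(x)$ inside $\mu(X)$ for $x\in U$, not merely open in its own subspace topology. The tool is a contradiction argument fed by properness: if points $y_n\in\mu(X)\setminus\mu(U)$ converged to $\mu(x)$, their preimages would lie in the compact set $\mu^{-1}(\overline{\{y_n\}})$ and accumulate at a point of the fiber over $\mu(x)$, and connectedness of the fibers combined with the local convexity data at that accumulation point is what forbids this. Once $\mu$ is open, local convexity of the image is immediate: near $\mu(x)$ the set $\mu(X)$ coincides with the convex set $\mu(U)$, so $\mu(X)$ is locally convex and Tietze--Nakajima yields its convexity.

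The connectedness of the fibers, together with the genuine core of the argument, I would establish by the segment-lifting (continuation) method. The local convexity datum at a point permits lifting any straight segment issuing from $\mu(x)$ inside $\mu(U)$ to a path in $U$: openness lets one push forward and connectedness of the local fibers lets one do so continuously. Globally, given $x_0,x_1\in X$ I consider the straight segment $t\mapsto \mu(x_0)+t\,(\mu(x_1)-\mu(x_0))$ and lift it as far as possible, obtaining a maximal parameter $T$. Properness guarantees that the lift accumulates at some point $x^\ast$ over the value $T$ (the relevant part of $X$ sits over a compact segment, hence is relatively compact), and the local datum at $x^\ast$ is then used to continue the lift beyond $T$ unless $T$ is already the endpoint; running this over the whole fiber and invoking connectedness of $X$ yields both that the entire segment lies in $\mu(X)$ and that the fibers are connected.

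The step I expect to be the main obstacle is exactly the continuation at the blocking point $x^\ast$: a priori $\mu(x^\ast)$ could sit on the relative boundary of the convex piece $\mu(U_{x^\ast})$ in the direction of the segment, so that the lift genuinely stops. Ruling this out is where all three hypotheses are needed simultaneously — local openness must force $\mu(x^\ast)$ into the relative interior of $\mu(U_{x^\ast})$ so that the segment can proceed, connected fibers must ensure the locally chosen lifts patch into a single continuous path, and properness must supply the accumulation point in the first place. Because openness, fiber-connectedness and image-convexity feed into one another at this point, the clean way to organize the proof is a single connectedness (``open and closed'') argument on the set of reachable parameters along each segment, establishing the three conclusions together rather than in sequence.
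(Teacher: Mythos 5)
The paper itself contains no proof of Theorem~\ref{thm:locglob}: it is quoted as a known result from \cite{HilgertNeebPlank} (see also \cite{BjoKar}), so your attempt has to stand on its own as a proof of a theorem whose published proofs take several pages. Your global skeleton is the correct one, and it is the one used in those references: properness into the locally compact space $\R^k$ makes $\mu$ a closed map, hence $\mu(X)$ is closed and connected; the Tietze--Nakajima theorem then reduces convexity of the image to local convexity of the image; and the real content is concentrated in openness onto the image and connectedness of the fibers. Up to this point your reductions are correct. But the two steps that carry all the content are not proved, and the justifications you sketch for them do not work. First, your openness argument is circular: the cluster point $x^\ast$ supplied by properness lies in the fiber over $\mu(x)$ but need not lie in $U$, and to get from the local datum at $x^\ast$ back to $\mu(U)$ you invoke ``connectedness of the fibers'' --- which is one of the conclusions of the theorem, not a hypothesis.

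Second, and more seriously, the claim that ``local openness must force $\mu(x^\ast)$ into the relative interior of $\mu(U_{x^\ast})$ in the direction of the segment'' is false. Openness of $\mu|_{U^\ast}$ onto $\mu(U^\ast)$ is a statement purely about the subspace topology of $\mu(U^\ast)$; it gives no control on how $\mu(U^\ast)$ sits inside $\R^k$. The model situation satisfying all local hypotheses is a symplectic moment map near a fixed point of a torus action, where the local image is a pointed cone with apex $\mu(x^\ast)$: the apex is an \emph{extreme} point of $\mu(U_{x^\ast})$, the opposite of being in its relative interior. Hence nothing local prevents your lift from arriving at a ``wrong'' point $x^\ast$ of the fiber over $c(T)$ whose local image contains no initial piece of the continued segment, even though another point $x^{\ast\ast}$ of the same fiber would allow continuation; ruling this out requires exactly the global fiber connectedness (equivalently, the openness of the global map onto its image) that is being proved. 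Your closing suggestion to run ``a single open-and-closed argument establishing the three conclusions together'' correctly names what must be done, but that simultaneous argument \emph{is} the theorem: in \cite{HilgertNeebPlank} and \cite{BjoKar} one does not lift a prescribed straight segment at all, but first establishes connectedness properties and polygonal connectedness of the image and then straightens polygonal paths in Tietze--Nakajima fashion. As written, your proposal assumes the crux rather than proving it.
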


For completeness, we recall the notion of convexity for maps which is
used here.

\begin{defi} A continuous path $c : [0, 1] \rightarrow \R^k$ is
\emph{monotone straight} if it is a weakly monotone parameterization
of a (possibly degenerate) closed interval, that is if $c(t_2)$
belongs to the segment from $c(t_1)$ to $c(t_3)$ whenever $0 \leq t_1
\leq t_2 \leq t_3 \leq 1$.

A continuous map $f\colon X \rightarrow \R^k$ is \emph{convex} if for
all $x, y \in X$ there exists a continuous path $c\colon [0,1]
\rightarrow X$ such that $c(0)=x, c(1)=y$ and $f \circ c$ is monotone
straight.
\end{defi}

Note that this is different from the notion of convexity of real
valued functions. We will also call a map $\mu$ satisfying the
assumptions of Theorem \ref{thm:locglob} \emph{locally open} and
\emph{locally convex}. 

This purely topological principle was used to reduce the statement of
the classical symplectic convexity to the following well-known local
statement, which one obtains by establishing a local normal form for
the moment map.
\begin{lemma} \label{lemmastep1}[see e.g.~Prop.~29 of~\cite{BjoKar}] A
moment map of a Hamiltonian action of a torus $T$ on a not necessarily
compact symplectic manifold $M$ is locally open and locally convex.
\end{lemma}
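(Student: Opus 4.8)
The plan is to reduce the statement to an explicit quadratic model via the equivariant normal form for the moment map, and then to check openness and convexity directly in that model. I would fix an arbitrary point $x\in M$ and let $H\subset T$ be the identity component of the isotropy group $T_x$; since $T$ is abelian, $H$ is a subtorus with Lie algebra $\lie h\subset\lie t$. Choosing a splitting $\lie t=\lie h\oplus\lie m$ (and dually $\lie t^*=\lie h^*\oplus\lie m^*$), I would invoke the Marle--Guillemin--Sternberg symplectic slice theorem---equivalently, an equivariant Darboux chart obtained from a $T$-invariant $\omega$-compatible almost complex structure---to identify a neighborhood of $x$ with $(T/H)\times\lie m^*\times V$, where $V$ is the symplectic slice (the symplectic normal space to the orbit $T\cdot x$), a Hermitian vector space carrying a unitary $H$-action. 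Because $T$ is abelian, the coadjoint action is trivial and $\mu$ is $T$-invariant, so it descends to the slice and takes the normal form
\begin{equation*}
\mu(q,z)=\mu(x)+q+\mu_V(z),\qquad q\in\lie m^*,\ z\in V,
\end{equation*}
where $\mu_V\colon V\to\lie h^*\subset\lie t^*$ is the quadratic moment map of the linear $H$-action.

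Diagonalizing the unitary $H$-representation, I would decompose $V=\bigoplus_j V_j$ into one-dimensional complex weight spaces $V_j\cong\mathbb{C}$ with weights $\alpha_j\in\lie h^*$, so that (up to sign)
\begin{equation*}
\mu_V(z)=\tfrac12\sum_j|z_j|^2\,\alpha_j.
\end{equation*}
The $\lie m^*$-component of $\mu$ is the submersion $q\mapsto\mu(x)+q$, while the image of the $\lie h^*$-component is the polyhedral cone $C$ of nonnegative combinations of the weights $\alpha_j$. Restricting to a small ball, the image of a model neighborhood $U$ is therefore a product $U'\times(C\cap B)$ of an open convex set in $\lie m^*$ with a truncated cone in $\lie h^*$, which is manifestly convex as a subset of $\lie t^*$.

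It remains to promote these image computations to genuine local openness and local convexity of the map, and this is where the real work---and the main obstacle---lies. For openness I would show that $\mu_V\colon V\to C$ is open onto $C$, exhibiting for a target $c\in C$ and nearby $c'$ preimages depending continuously on $c'$, using that a representation of $c$ as a nonnegative combination of the active weights persists under small perturbations. For convexity, given two points of $U$ with images $a,a'$, I would lift the straight segment $a_t=(1-t)a+ta'$ to a path in $U$: its $\lie m^*$-part lifts trivially through the submersion, and for the $\lie h^*$-part I must select $z_j(t)$ continuously with $\tfrac12\sum_j|z_j(t)|^2\alpha_j=a_t^{\lie h}$. The difficulty is exactly this continuous selection, since the decomposition of a point of $C$ as a nonnegative combination of the $\alpha_j$ is highly non-unique; once it is arranged, the resulting path has monotone straight $\mu$-image by construction, so that $\mu|_U$ is convex in the required sense. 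This last step is precisely the content of the linear model established in \cite[Prop.~29]{BjoKar}.
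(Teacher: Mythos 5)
The paper does not actually prove this lemma---it is quoted as a known result with the citation \cite[Prop.~29]{BjoKar}---so there is no in-paper argument to compare against. Your sketch is the standard reduction via the Marle--Guillemin--Sternberg normal form to the quadratic model $\mu_V(z)=\tfrac12\sum_j|z_j|^2\alpha_j$ on the symplectic slice, which is precisely the argument underlying the cited proposition; it is correct in outline, and the one step you explicitly leave open (openness of $\mu_V$ onto the weight cone and the continuous selection of preimages needed for local convexity) is exactly what the citation supplies, so your proposal is consistent with, and in fact more detailed than, the paper's treatment.
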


In the situation of Theorem \ref{thm:convmu} we will achieve convexity
of the twisted moment map through the following steps:
\begin{enumerate}[1.]
\item \label{step1proof} Lift the $T$-action to a Hamiltonian action
on the minimal covering using Proposition \ref{liftG}. Its moment
map $\hat \mu$ is locally open and locally convex by Lemma
\ref{lemmastep1}.
\item \label{step2proof} Show that $\mu$ is locally open and locally
convex using the projection $\pi \colon \hat M \rightarrow M$ and the
relation \eqref{mu0mu} between the twisted vs. symplectic moment map.
\item \label{step3proof}  Conclude that $\mu$ is open and convex using
the local-global principle in~Theorem~\ref{thm:locglob}.
\end{enumerate} 

Observe that in order to apply the local-global principle, it is
essential that the map in question is proper. This is automatic for
$\mu$, as we assume the manifold $M$ to be compact, but not for
$\hat\mu$ (see below).


To finish the proof of Theorem \ref{thm:convmu} it remains to show the
second step above. For this, we now use the assumption on the action
that it is of Lee type: let $Y\in \lie t$ be the element that induces
the {\em s}-Lee vector field. By Remark \ref{twHam1}, $\mu:M\to \lie t^*$
takes values in the affine subspace $\{\alpha\in \lie t^*\mid
\alpha(Y)=1\}$. Recall that by \eqref{mu0mu}, the moment
maps $\mu$ and $\hat \mu$ are related by
\[ \hat \mu = e^{-\lambda} \mu\circ \pi,
\] where $\pi\colon \hat M \rightarrow M$ is the minimal
covering. This shows that 
\begin{equation}
\label{eq:relPsi} \mu \circ \pi = \Psi \circ \hat \mu,
\end{equation} where the rescaling map $\Psi \colon \{ \alpha \mid
\alpha(Y) > 0 \}=:\call H \rightarrow \lie t^*$ with image
$\{\alpha\in \lie t^*\mid \alpha(Y)=1\}$ is given by
\[ \Psi(\alpha) = \frac{\alpha}{\alpha(Y)}.
\]

We have the following fact.
\begin{lemma} \label{lemmaPsi} The map $\Psi$ is open and maps weakly
monotone parameterizations of segments in $\call H$ to weakly monotone
parameterizations of segments in $\lie t^*$.
\end{lemma}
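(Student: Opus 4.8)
The plan is to treat the two assertions separately, exploiting that $\Psi$ is nothing but the central projection of the open half-space $\call H$ onto the affine hyperplane $\{\alpha\mid \alpha(Y)=1\}$ from the origin. Throughout I will use that $\call H$ is convex (being an open half-space), so that a segment with endpoints in $\call H$ stays in $\call H$ and $\Psi$ is defined and smooth along it.

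For openness, I would first record that the image $\Psi(\call H)$ is exactly the hyperplane $\{\alpha\mid \alpha(Y)=1\}$ (every such $\alpha$ lies in $\call H$ and is fixed by $\Psi$), so that ``open'' is necessarily meant onto this image, as the image has empty interior in $\lie t^*$. The clean way is to exhibit the homeomorphism
$$\call H \longrightarrow \{\alpha\mid \alpha(Y)=1\}\times \R_{>0},\qquad \alpha\longmapsto \bigl(\Psi(\alpha),\,\alpha(Y)\bigr),$$
whose inverse is $(\beta,s)\mapsto s\beta$; under this identification $\Psi$ becomes the projection onto the first factor, which is open. Hence $\Psi\colon \call H\to\Psi(\call H)$ is open.

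For the second assertion, let $c\colon[0,1]\to\call H$ be \emph{monotone straight} with endpoints $P:=c(0)$ and $Q:=c(1)$. When $P\neq Q$ one may write $c(t)=(1-s(t))P+s(t)Q$ with $s\colon[0,1]\to[0,1]$ weakly monotone nondecreasing: the betweenness condition forces every $c(t)$ onto the segment $[P,Q]$ and makes the resulting weight $s(t)$ monotone (the case $P=Q$ is trivial). Setting $p_0:=P(Y)>0$ and $q_0:=Q(Y)>0$ and substituting into the definition of $\Psi$, a direct computation gives
$$\Psi(c(t))=\bigl(1-\sigma(s(t))\bigr)\,\Psi(P)+\sigma(s(t))\,\Psi(Q),\qquad \sigma(s)=\frac{s\,q_0}{(1-s)p_0+s\,q_0}.$$
Thus $\Psi\circ c$ traces the segment from $\Psi(P)$ to $\Psi(Q)$, and it remains only to check that $t\mapsto\sigma(s(t))$ is weakly monotone nondecreasing.

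The crux is the monotonicity of the reparametrization $\sigma$. Since $\sigma'(s)=p_0q_0/\bigl((1-s)p_0+sq_0\bigr)^2>0$, the function $\sigma$ is strictly increasing on $[0,1]$ with $\sigma(0)=0$ and $\sigma(1)=1$; composing with the nondecreasing $s$ yields a weakly monotone nondecreasing map $[0,1]\to[0,1]$, so $\Psi\circ c$ is again monotone straight. The point worth emphasizing, and the only genuinely delicate one, is that $\Psi$ is \emph{not} affine, so it does not preserve the linear parametrization of a segment; what saves us is that the notion of monotone straight path is flexible enough to absorb precisely the nonlinear reparametrization $\sigma$ introduced by the central projection. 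This is handled entirely by the explicit positivity of $\sigma'$ recorded above.
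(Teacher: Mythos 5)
Your proof is correct and follows essentially the same approach as the paper's: both arguments rest on recognizing $\Psi$ as the central projection from the origin onto the affine hyperplane $\{\alpha \mid \alpha(Y)=1\}$, proving openness by an explicit trivialization of $\call H$ and the segment property from the projective nature of $\Psi$. Your version merely makes explicit (via the reparametrization $\sigma$ and the positivity of $\sigma'$) what the paper dispatches by appealing to the general fact that projective transformations map lines to lines and, by continuity, preserve betweenness on segments.
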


\begin{proof} Openness follows by taking a basis $\{e_i \}$ of $\lie
t$ and its dual $\{e^i\}$ such that $Y = e_1$. In these coordinates we
have 
\[ \Psi(y_1, \ldots, y_k) = \biggl (1, \frac{y_2}{y_1}, \ldots,
\frac{y_k}{y_1} \biggr)
\] and we can see that $\Psi$ is open.

Clearly, $\Psi$ is a projective transformation (it is the central
projection from the origin onto an affine hyperplane), and as such it
maps lines into lines. By continuity, it maps interiors of segments
into interiors of segments, thus it is monotone straight.  
$\gamma\colon [0,1] \rightarrow \call H$ be a monotone
parameterization of a segment. This means that for all $0 \leq t_1
\leq t_2 \leq t_3 \leq 1$, the point $\gamma(t_2)$ belongs to the
segment $[\gamma(t_1), \gamma(t_3)]$. 
\end{proof}

Using the local openness and convexity of $\hat \mu$, we can deduce
now the same properties for $\mu$.

\begin{lemma}\label{lemma:loccvxmu} The map $\mu\colon M \rightarrow
\lie t^*$ is locally open and locally convex.
\end{lemma}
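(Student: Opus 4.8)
The plan is to transfer the local openness and local convexity of the symplectic moment map $\hat\mu$ -- which hold by Lemma \ref{lemmastep1} after lifting the action via Lemma \ref{liftG} -- down to $\mu$ through the covering projection $\pi\colon\hat M\to M$ and the rescaling $\Psi$ of \eqref{eq:relPsi}. First I would fix $x\in M$ and a preimage $\hat x\in\pi^{-1}(x)$. Since $\pi$ is a covering map, there is an open neighborhood $\hat U$ of $\hat x$ that $\pi$ maps homeomorphically onto $U:=\pi(\hat U)$; choosing $\hat U$ from the neighborhood basis of $\hat x$ underlying Lemma \ref{lemmastep1}, I may also assume $\hat\mu|_{\hat U}\colon\hat U\to\hat\mu(\hat U)$ is open and convex. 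Being a convex map, its image $S:=\hat\mu(\hat U)$ is convex (for any two image points the segment lies in the image). Because the action is of Lee type, Remark \ref{twHam1} gives $\mu^Y\equiv 1$, hence $\hat\mu^Y=e^{-\lambda}>0$ and $S\subset\call H$, the domain of $\Psi$. On $U$ the relation \eqref{eq:relPsi} then yields the factorization
\[ \mu|_U=\Psi\circ\hat\mu|_{\hat U}\circ(\pi|_{\hat U})^{-1}. \]

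For local convexity I would argue by lifting paths. Given $y_0,y_1\in U$, set $\hat y_i:=(\pi|_{\hat U})^{-1}(y_i)$; by convexity of $\hat\mu|_{\hat U}$ there is a path $\hat c$ in $\hat U$ from $\hat y_0$ to $\hat y_1$ with $\hat\mu\circ\hat c$ a weakly monotone parameterization of a segment in $\call H$. Then $c:=\pi\circ\hat c$ joins $y_0$ to $y_1$ inside $U$, and $\mu\circ c=\Psi\circ(\hat\mu\circ\hat c)$ is monotone straight by Lemma \ref{lemmaPsi}. Hence $\mu|_U$ is convex; this half is essentially formal once Lemma \ref{lemmaPsi} is available.

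For local openness I would compose the three maps in the factorization. The map $(\pi|_{\hat U})^{-1}$ is a homeomorphism, and $\hat\mu|_{\hat U}$ is open onto its image $S$ by construction, so it remains to show that $\Psi|_S\colon S\to\Psi(S)=\mu(U)$ is open. Although $\Psi$ is open as a map $\call H\to\call K:=\{\alpha\in\lie t^*\mid\alpha(Y)=1\}$ by Lemma \ref{lemmaPsi}, the set $S$ is in general not open in $\call H$, so this does not follow by a naive restriction. Here I would exploit the convexity of $S$ together with the facts from Lemma \ref{lemmaPsi} that $\Psi$ carries segments to segments and hence convex sets to convex sets: for a relatively open $O\subset S$ and $p\in O$, and any $\beta\in\Psi(S)$, choosing $\alpha_1\in S$ over $\beta$ and pushing forward a short initial portion of the segment $[p,\alpha_1]\subset S$ shows that an initial portion of $[\Psi(p),\beta]$ lies in $\Psi(O)$; since $\Psi(O)$ and $\Psi(S)$ are convex, this forces $\Psi(O)$ to be a relative neighborhood of $\Psi(p)$ in $\Psi(S)$. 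Thus $\Psi|_S$ is open onto its image, and therefore so is $\mu|_U$.

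I expect the genuine obstacle to be exactly this last point. The rescaling $\Psi$ drops one dimension, projecting $\call H$ onto the affine hyperplane $\call K$, so openness of $\mu$ cannot be deduced from openness of $\hat\mu$ by composing submersions or diffeomorphisms on the target. The Lee type hypothesis is precisely what confines $\hat\mu$ to $\call H$ and makes $\Psi$ applicable, while the convexity of the local image $S$ -- a consequence of the local convexity of $\hat\mu$ -- is what rescues openness of the restricted central projection, playing the role of the standard fact that linear (here projective) maps are relatively open on convex sets.
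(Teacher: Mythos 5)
Your proposal is correct and follows essentially the same route as the paper: lift to the minimal covering, use the local openness and convexity of $\hat\mu$ from Lemma \ref{lemmastep1}, and push paths and images down through $\pi$ and the rescaling $\Psi$ via \eqref{eq:relPsi} and Lemma \ref{lemmaPsi}. The only difference is that the paper dispatches local openness with the single sentence ``it also follows from that of $\hat\mu$,'' whereas you supply the missing justification that the central projection $\Psi$ restricted to the convex set $S=\hat\mu(\hat U)$ is relatively open --- a genuine and correctly handled subtlety, since $S$ is not open in $\call H$.
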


\begin{proof} Let $p \in M$ and choose a point $\hat p \in \hat M$ in
the fiber over $p$. By Lemma \ref{lemmastep1}, there exists a
neighborhood $\hat U \subset \hat M$ of $\hat p$ such that the
restriction of $\hat \mu$ to $\hat U$ is locally open and locally
convex.

Let $U := \pi( \hat U)$, let $x, y \in U$ and choose $\hat x, \hat y
\in \hat U$ in their fibers. The convexity of $\left.\hat
\mu\right|_{\hat U}$ means that there exists a continuous path $\hat
\gamma \colon [0,1] \rightarrow \hat U$ such that $\hat \gamma(0) =
\hat x$, $\hat \gamma(1) = \hat y$ and $\hat \mu \circ \hat \gamma$ is
a monotone parameterization. Let $\gamma = \hat \gamma \circ \pi$ be
the projection of the path $\hat \gamma$, connecting $x$ to $y$.

From the relation \eqref{eq:relPsi}, we have that $\mu \circ \gamma =
\Psi \circ \hat \mu \circ \hat \gamma$ and by Lemmas \ref{lemmaPsi}
and \ref{lemmastep1} we conclude that  $\mu \circ \gamma$ is a
monotone parameterization of a segment. The local openness of $\mu$
also follows from the one of $\hat \mu$.
\end{proof}

As explained above, this concludes the proof of
Theorem~\ref{thm:convmu}.

Conversely, the Lee type property can be read off in terms of the image of the twisted moment map.
\begin{prop}\label{convex-tw-affine}
  Let $M$ be an lcs manifold and $T$ a torus action on it (effectively) in a twisted Hamiltonian fashion and having $\mu:M\ra\lie t^*$ as the twisted moment map. Suppose $\mu(M)\subset \lie t^*$ is convex and has no interior points. Then the action of $T$ on $M$ is of Lee type.\end{prop}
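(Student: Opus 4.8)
The plan is to convert the two geometric hypotheses on $\mu(M)$ into a single linear constraint. Since a convex set whose affine hull is all of $\lie t^*$ contains a full-dimensional simplex and hence interior points, the assumption that $\mu(M)$ is convex with empty interior forces its affine hull to be a proper affine subspace; in particular $\mu(M)$ is contained in an affine hyperplane $\{\alpha\in\lie t^*\mid\langle\alpha,Y\rangle=c\}$ for some nonzero $Y\in\lie t$ and some constant $c\in\R$. This is the one step where convexity is genuinely used, since a non-convex set may have empty interior while still affinely spanning $\lie t^*$. I would begin by fixing such a pair $(Y,c)$.

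With $Y$ and $c$ in hand, the component $\mu^Y=\langle\mu,Y\rangle$ is identically equal to the constant $c$. Plugging this into the defining relation of the twisted moment map, $d_\theta\mu^Y=\iota_{\bar Y}\omega$, and using $d_\theta c=dc-c\theta=-c\theta$, I obtain the pointwise identity $\omega(\bar Y,\cdot)=-c\,\theta$. The goal is then to compare this with the defining equation \eqref{aLlcs} of the {\em s}-Lee field, $\omega(V,\cdot)=-\theta$, which rewrites as $-c\,\theta=c\,\omega(V,\cdot)=\omega(cV,\cdot)$; non-degeneracy of $\omega$ then forces $\bar Y=cV$.

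It remains to divide by $c$, which requires $c\neq 0$; this is the step I expect to be the only real obstacle, and it is where effectiveness of the action enters. If $c=0$, then $\omega(\bar Y,\cdot)=0$, so $\bar Y=0$ by non-degeneracy of $\omega$; but the one-parameter subgroup $t\mapsto\exp(tY)$, which is nontrivial since $Y\neq 0$, would then act trivially on $M$, contradicting effectiveness. Hence $c\neq 0$, and $V=\overline{Y/c}$ is the fundamental vector field of $Y/c\in\lie t$. Thus the {\em s}-Lee field lies in the image of $\lie t$, so the action is of Lee type, as claimed.
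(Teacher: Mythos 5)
Your proposal is correct and follows essentially the same route as the paper: both identify the affine hull of $\mu(M)$ as a proper affine subspace, use effectiveness together with non-degeneracy of $\omega$ to exclude the case where $\mu(M)$ lies in a linear hyperplane (your $c=0$ case, the paper's ``codimension $\ge 2$ or $0\in W$'' case), and then read off from $\mu^Y\equiv c$ and $d_\theta c=-c\theta$ that $\overline{Y/c}$ is the {\em s}-Lee field. The only difference is organizational: the paper normalizes the hyperplane to $\{\alpha\mid\alpha(V)=1\}$ at once, while you divide by $c$ at the end.
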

\begin{proof} If $\mu(M)$ is convex, then there exists an affine subspace $W\subset \lie t^*$ such that $\mu(M)\subset W$, and $W$ is minimal with this property. By assumption, $W\ne \lie t^*$.

  If $W\subset \lie t^*$ has codimension at least $2$, or if $0\in W$, there exists $X\in \lie t$, $X\ne 0$, such that $\mu(M)$ lies in the annihilator of $X$, or, equivalently, $\mu_X\equiv 0$. This is, however, impossible since the action is effective. Therefore, $W$ is an affine hyperplane, thus there exists a unique element $V\in \lie t$ such that $W=\{\alpha\in\lie t^*\ |\ \alpha(V)=1\}$. But then $\mu_V\equiv 1$, which means that $V$ is the {\em s}-Lee vector field.
\end{proof}
Note that convexity of $\mu$ alone does not characterize the Lee type property: as an example, consider $\lie t_0$ a complement in $\lie t$ to $\R V$, where $T$ is a torus acting in twisted Hamiltonian fashion of Lee type on $M$, with twisted moment map $\mu$. For a countable choice of $\lie t_0$, it is the Lie algebra of a subtorus $T_0\subset T$. The twisted moment map $\mu_0$ of the action of $T_0$ is obtained from $\mu$ after projecting onto $\lie t^*_0=\lie t^*/\mbox{Ann}(\lie t_0)$. It is thus convex as well. On the other hand, the corresponding projection induces an isomorphism from $\mathcal{H}$ to $\lie t^*_0$, thus the image of $\mu_0$ contains a nonempty open set. Depending on the choice of $T_0$, $0$ may be in the image of $\mu_0$ or not.\smallskip

Finally, we wish to show that the moment
image is a convex polytope:

\begin{prop} \label{prop:imagepolytope} In the situation of Theorem
\ref{thm:convmu}, the moment image $\mu(M)$ is a convex polytope. It
is the convex hull of $\mu(C)$, where 
  \[ C = \{p\in M\mid \bar X_p \in \R\cdot V_p \text{ for all } X\in
\lie t\}  
  \] is the set of points whose $T$-orbits equal their $V$-orbits,
where $V$ is the {\em s}-Lee vector field. (Thus the $T$-orbits in $C$ are
at most one-dimensional).
\end{prop}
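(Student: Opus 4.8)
The plan is to transport the classical symplectic description of the moment polytope as the convex hull of images of fixed-point components down from the minimal covering $\hat M$ to $M$, using the relation \eqref{mu0mu} between $\hat\mu$ and $\mu$ together with the rescaling map $\Psi$ of Lemma \ref{lemmaPsi}. By Theorem \ref{thm:convmu} we already know $\mu(M)$ is convex, so the content is to identify its extreme structure. Recall that by Lemma \ref{liftG} the $T$-action lifts to a Hamiltonian action of $T$ on $(\hat M,\omega_0)$ with symplectic moment map $\hat\mu$, and that $\mu\circ\pi=\Psi\circ\hat\mu$.

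First I would analyze the local normal form of $\hat\mu$ near a point $\hat p$. By the equivariant Darboux theorem, in a $T$-invariant neighborhood the symplectic moment map looks like a constant plus a sum of weighted quadratic terms $\sum_j c_j |z_j|^2$, whose differential at $\hat p$ has image spanned by the isotropy weights. The vertices and faces of the local moment image are thus governed by which weights vanish, i.e.\ by the isotropy Lie algebra $\lie t_{\hat p}$. Crucially, $d\hat\mu_{\hat p}$ has image equal to the annihilator $\operatorname{Ann}(\lie t_{\hat p})$, so $\mu$ is locally ``flat'' along the directions where the stabilizer is nondiscrete. The extreme points of the convex set $\mu(M)$ can only be attained at points $p$ where, locally, $\mu$ does not move in an open cone of directions — that is, where $\dim(\lie t\cdot p)$ is as small as possible.

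Next I would pin down the set where this minimality occurs. Since the action is of Lee type, the $V$-orbit is always contained in the $T$-orbit, so the smallest possible $T$-orbits are exactly the one-dimensional $V$-orbits (the $V$-flow cannot have fixed points because $\mu_V\equiv 1$ by Remark \ref{twHam1}, so $V$ is nowhere zero). This is precisely the set $C$. The standard Morse-theoretic or local-normal-form argument then shows that the image of any generic component $\mu^Z$ (for $Z\in\lie t$ generic) attains its extrema on $C$, and a finiteness argument — the isotropy data stratifies $\hat M$ into finitely many $T$-invariant symplectic submanifolds — yields that $\mu(C)$ is a finite union of convex polytopes, hence $\operatorname{conv}(\mu(C))$ is a polytope. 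Combining with convexity of $\mu(M)$ and the fact that every extreme point of $\mu(M)$ lies in $\mu(C)$ gives $\mu(M)=\operatorname{conv}(\mu(C))$.

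The main obstacle I expect is the passage through the non-proper map $\hat\mu$ and the rescaling $\Psi$: on $\hat M$ the classical fixed-point description is unavailable because $V$ has no zeros, so there are no $T$-fixed points and the usual ``convex hull of fixed-point images'' statement does not literally apply upstairs. The resolution is to work with the quotient by the $V$-direction or, equivalently, to observe via $\Psi$ that passing to the affine hyperplane $\{\alpha(Y)=1\}$ quotients out the $V$-weight; one reduces to a genuine Hamiltonian torus action transverse to $V$ whose fixed-point set projects to $\mu(C)$. Making this reduction precise — identifying the transverse symplectic data and verifying that $\Psi$ carries the face structure of the symplectic cone to that of $\mu(M)$ — is the technical heart, and I would handle it by combining Lemma \ref{lemmaPsi} with the local convexity established in Lemma \ref{lemma:loccvxmu}.
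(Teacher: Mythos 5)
Your overall strategy (Krein--Milman, transporting the symplectic normal form through $\Psi$, and counting extremal points via isotropy strata) is the same as the paper's, but two load-bearing steps are wrong or missing. First, the claim that ``$V$ is nowhere zero because $\mu^V\equiv 1$'' is false in the twisted setting: $\iota_V\omega=d_\theta 1=-\theta$, so $V$ vanishes exactly where $\theta$ does, and nothing in the hypotheses excludes zeros of $\theta$. The paper's proof explicitly allows a nonempty $T$-fixed point set $M^T$ (and shows $\mu$ collapses each of its components to a point precisely because $\theta_p=0$ there). This error propagates: your identification of $C$ with ``the points of minimal orbit dimension'' only works if $V$ never vanishes, and your closing paragraph's premise that there are no $T$-fixed points upstairs is unjustified. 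Note also that a point can have a one-dimensional $T$-orbit without lying in $C$ (namely when $V_p=0$ but the orbit is a circle), so minimality of orbit dimension is not the right characterization.

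Second, the crux of the statement --- that every extremal point of $\mu(M)$ is the image of a point of $C$ --- is asserted via an uncarried-out ``standard Morse-theoretic argument'' rather than proved. The paper's argument is short and you should supply it: the local plane-times-cone structure of $\mu(M)$ (normal form upstairs plus Lemma \ref{lemmaPsi}) forces $d\mu_p=0$ at any extremal value $\mu(p)$, and then the defining equation gives $\iota_{\bar X}\omega_p=-\mu^X(p)\theta_p=-\mu^X(p)\iota_V\omega_p$, whence $\bar X_p=-\mu^X(p)V_p$ by nondegeneracy of $\omega$, i.e.\ $p\in C$. Likewise, finiteness of the set of extremal points does not follow from your unproved claim that $\mu(C)$ is a finite union of convex polytopes; what one actually shows is that $C$ is covered by $M^T$ together with the finitely many fixed-point sets $M^H$ of codimension-one isotropy groups $H$ with $V\notin\lie h$, and that for such $H$ the image of $d\hat\mu$ along $\hat M^H$ lies in the one-dimensional annihilator of $\lie h$, so each component of $M^H$ is sent by $\mu=\Psi\circ\hat\mu\circ\pi^{-1}$ into a segment and contributes at most two extremal points. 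Without these two steps the proposal does not establish either assertion of the proposition.
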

\begin{proof} We first note that by the Krein-Milman theorem $\mu(M)$
is the convex hull of its extremal points, i.e.\ those points
$\alpha\in \mu(M)$ such that whenever $\alpha = (1-t)\beta + t\gamma$
for some $0<t<1$ and $\beta,\gamma\in \mu(M)$, then
$\beta=\gamma=\alpha$.

On the other hand, any point in $\mu(M)$ contains a neighborhood in
$\mu(M)$ which looks like the product of a $k$-plane with a cone --
this follows from the normal form for symplectic moment maps and the
fact that by Lemma \ref{lemmaPsi} the map $\Psi$ sends subsets of this
form to subsets of this form. This implies that for any extremal point
$\alpha=\mu(p)$ of $\mu(M)$ we have $d\mu_p=0$. Then one has, by the
definition of the twisted moment map,
\[ \iota_{\bar X} \omega_p = - \mu^X(p)
\theta_p=-\mu^X(p)\iota_V\omega_p
\] for all $X\in \lie t$; $V$ denotes the {\em s}-Lee vector field. This
means that $\overline{X}_p=-\mu^X(p)V$, hence the $T$-orbit through
$p$ coincides with the $V$-orbit. Thus $\mu(M)$ is the convex hull of
$\mu(C)$.

It remains to show that $\mu(M)$ has only finitely many extremal
points. For this we note that $C$ is the union of the (possibly empty)
$T$-fixed point set $M^T$ with the fixed point sets $M^H$, where $H$
runs through the (finitely many) codimension one isotropy subgroups
whose Lie algebra $\lie h$ does not contain $V$. Each of these fixed
point sets is a finite union of submanifolds of $M$. 

We first claim that $\mu$ sends every component $N$ of the $T$-fixed
point set to a single point. Indeed, for any $p\in N$ we have
$\theta_p = 0$ because the {\em s}-Lee vector field vanishes along $N$, and
hence $d\mu_p=0$.

For any codimension one subgroup with $V\notin \lie h$ we observe that
the minimal covering $\pi:\hat M\to M$ restricts, by equivariance of
$\pi$, to a covering $\pi:{\hat M}^H \to M^H$. We fix an arbitrary
component $N$ of $M^H$, as well as a component $\hat N$ of ${\hat
M}^H$ that is sent by $\pi$ to $N$. Consider the symplectic moment map
$\hat\mu:\hat M\to \lie t^*$ of the lifted $T$-action on the minimal
covering $\hat M$. We know that the image of $d \hat \mu_q:T_q\hat
M\to \lie t^*$, for all $q\in \hat N$, is contained in the annihilator
of $\lie h$, which is a one-dimensional subspace of $\lie t^*$. In
particular, $\hat\mu(\hat N)$ is a connected, closed subspace of a
straight line in $\lie t^*$. Using \eqref{eq:relPsi} 
\[ \mu\circ \pi = \Psi \circ \hat \mu
\] and that $\Psi$ sends segments to segments by Lemma \ref{lemmaPsi}
we conclude that $\mu(N)$ is a connected, closed subspace of a
straight line in $\lie t^*$ as well. It follows that $\mu(N)$ can
contain at most two extremal points of $\mu(M)$. As there are in total
only finitely many components $N$ of relevant fixed point submanifolds
$M^H$, we conclude that $\mu(M)$ has only finitely many extremal
points, hence is a convex polytope.
\end{proof}
 In the proof we observed that the components of $M^T$ (if
there exist any) are sent by $\mu$ to single points. The same holds
for those components $N$ of $M^H$ that are strict lcs submanifolds.
\begin{prop}\label{muconst} Let $T$ be a torus acting in twisted Hamiltonian fashion of Lee type on the lcs manifold $M$, and let $H\subset T$ be a closed subgroup of $T$, such that $\lie h\oplus \R V=\lie t$, and let $N$ be a connected component of $M^H:=\{x\in M\ |\ H.x=x\}$ such that $\theta|_N$ is not exact (we say that $N$ is strict lcs). Then $\mu|_N$ is constant.\end{prop}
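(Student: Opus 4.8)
The plan is to compute $\mu^X|_N$ separately for $X\in\lie h$ and for the distinguished direction $V$, using the defining equation $d_\theta\mu^X=\iota_{\bar X}\omega$ of the twisted moment map together with Lemma \ref{lem2.1}.

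First I would treat $X\in\lie h$. Since $N\subset M^H$ and $\lie h$ is the Lie algebra of $H$, every fundamental field $\bar X$ with $X\in\lie h$ vanishes identically along $N$. Hence $\iota_{\bar X}\omega$ vanishes as a covector at every point of $N$, and the defining relation gives $d_\theta\mu^X=d\mu^X-\mu^X\theta=0$ at all points of $N$. Pulling this identity back along the inclusion $\iota\colon N\hookrightarrow M$ yields
\[ d\bigl(\mu^X|_N\bigr)=\bigl(\mu^X|_N\bigr)\,\theta|_N \]
on $N$. Now $\theta|_N$ is closed (as $\theta$ is closed) and, by hypothesis, not exact, so Lemma \ref{lem2.1}, applied to the connected manifold $N$, the closed non-exact form $\theta|_N$, and the function $\mu^X|_N$, forces $\mu^X|_N\equiv 0$ for every $X\in\lie h$.

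It remains to determine $\mu^V$, where $V\in\lie t$ denotes the element inducing the {\em s}-Lee vector field. By Remark \ref{twHam1} the {\em s}-Lee field is twisted Hamiltonian with constant Hamiltonian $h_V\equiv 1$; since $\iota_{\bar V}\omega=d_\theta h_V=d_\theta\mu^V$, the injectivity of $d_\theta$ (Lemma \ref{lem:twisteddiffinj}) gives $\mu^V\equiv 1$ on all of $M$, in particular on $N$. Writing an arbitrary $X\in\lie t$ as $X=X_0+cV$ with $X_0\in\lie h$ and $c\in\R$ according to the splitting $\lie t=\lie h\oplus\R V$, linearity of $X\mapsto\mu^X$ gives $\mu^X|_N=\mu^{X_0}|_N+c\,\mu^V|_N\equiv c$, a constant. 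As this holds for $X$ ranging over a basis of $\lie t$, the map $\mu|_N\colon N\to\lie t^*$ is constant.

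The only delicate point is the appeal to Lemma \ref{lem2.1}, and this is precisely where the strictness hypothesis on $N$ is used: closedness of $\theta|_N$ is automatic, but non-exactness is essential, since otherwise $d(\mu^X|_N)=(\mu^X|_N)\,\theta|_N$ admits nonzero solutions $\mu^X|_N=k\,e^{\beta}$ (with $\beta$ a primitive of $\theta|_N$) and the $\lie h$-components of $\mu$ need not vanish on $N$. Everything else reduces to the routine pullback computation above and the elementary splitting of $\lie t$.
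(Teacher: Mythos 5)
Your proof is correct and follows essentially the same route as the paper's: both show $\mu^X|_N\equiv 0$ for $X\in\lie h$ by observing that $\bar X$ vanishes along $N$, hence $d\mu^X=\mu^X\theta$ on $N$, and then invoke Lemma \ref{lem2.1} with the non-exact form $\theta|_N$, while $\mu^V\equiv 1$ handles the remaining direction via the splitting $\lie t=\lie h\oplus\R V$. Your write-up is slightly more explicit about the pullback to $N$ and the linear-algebra step at the end, but there is no substantive difference.
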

Note that, from Lemma \ref{lemcompl} below, we see that $\omega|_N$ is nondegenerate, which means that $(N,\omega|_N)$ is lcs with Lee form $\theta|_N$. It is thus equivalent to say that $N$ is strict lcs and that $\theta|_N$ is not exact.
\begin{proof} Let us
show that $\left.d\mu_p\right|_{T_pN}:T_pN\to \lie t^*$ is the zero map for all $p\in N$.

For the {\em s}-Lee vector field $V$ we have $d\mu^V_p=0$ because $\mu^V\equiv 1$ by Remark \ref{twHam1}. For $X\in \lie h$
we first claim that $\mu^X(p)=0$. 
Indeed, on $N$ we have the equality $d\mu^X = \mu^X \theta$. If $N$ is
strict lcs, then the restriction (pullback) of $\theta$ on $N$ is not
exact, thus Lemma \ref{lem2.1} shows that $\mu^X=0$ on $N$ for
all $X\in\lie h$.

Therefore, if $p\in N$, then $d\mu_p(X)=0$, for all $X\in \lie t$, in particular $\mu|_N$ is constant.\end{proof}
\begin{corol}\label{Nstrictlcs}
With the notations above, $N$ is strict lcs if $\theta$ does not
vanish on $N$ (here, we mean not just $\theta|_{T_pN}\ne 0$, but
$\theta_p\ne 0$ for all $p\in N$).\end{corol}\begin{proof}
Suppose $N$ is gcs, thus
$\theta|_N=d\beta$, for $\beta:N\ra \R$. As $N$ is compact, $\beta$
has at least an extremal point $q$ such that $\theta_q=0$ on
$T_qN$.

On the other hand, we have the following Lemma.
\begin{lemma}\label{lemcompl}
Suppose $\omega$ is $T$-invariant, $H\subset T$ is a subgroup and let $q\in M^H$. Then $\theta_q$ is zero on a
complement of $T_qM^H$ in $T_qM$.\end{lemma}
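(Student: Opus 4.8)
The plan is to reduce the statement to a fact about the isotropy representation of $H$ at the fixed point $q$, after first upgrading the $T$-invariance of $\omega$ to $T$-invariance of the Lee form $\theta$. The key observation is that an $H$-invariant linear functional on $T_qM$ must annihilate the sum of the nontrivial isotypic summands of the isotropy representation, and that this sum is a complement of the fixed subspace $T_qM^H$.

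First I would show that $\theta$ is $T$-invariant. Applying $\L_{\bar X}$ to the lcs relation $d\omega=\theta\wedge\omega$ and using the hypothesis $\L_{\bar X}\omega=0$ (equivalently $d\L_{\bar X}\omega=0$) gives $(\L_{\bar X}\theta)\wedge\omega=0$; since $M$ has real dimension greater than $2$, wedging with the nondegenerate form $\omega$ is injective on $1$-forms, so $\L_{\bar X}\theta=0$ for every $X\in\lie t$. This is the same implication already used in the proof of Proposition~\ref{closure-sLee}. In particular $h^*\theta=\theta$ for all $h\in H\subset T$. Next, since $H$ is compact and fixes $q$, I would pass to the isotropy representation $h\mapsto dh_q$ on $T_qM$. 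Evaluating $h^*\theta=\theta$ at $q$ yields $(dh_q)^*\theta_q=\theta_q$, so $\theta_q$ is an $H$-invariant element of $(T_qM)^*$.

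By complete reducibility of the compact group $H$ I would decompose $T_qM=(T_qM)^H\oplus W$ into the trivial isotypic component and its $H$-invariant complement $W$, the sum of the nontrivial isotypic summands. Recall the standard fact that the fixed subspace $(T_qM)^H$ coincides with $T_qM^H$, the tangent space to the fixed-point submanifold (seen, e.g., via an $H$-invariant metric and its exponential map). Since $W$ contains no trivial subrepresentation, neither does $W^*$, so every $H$-invariant functional on $T_qM$ vanishes on $W$; in particular $\theta_q|_W=0$. Thus $W$ is a complement of $T_qM^H$ in $T_qM$ on which $\theta_q$ is zero, as claimed. I do not expect a serious obstacle here: the argument is essentially formal once $T$-invariance of $\theta$ is in hand, and the only point needing care is the identification $(T_qM)^H=T_qM^H$, which is classical for smooth actions of compact groups.
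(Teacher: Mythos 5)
Your proof is correct and follows essentially the same route as the paper's: both reduce the claim to the isotropy representation at $q$ and use invariance to show that $\theta_q$ kills the sum of the nontrivial isotypic summands, which is a complement of $T_qM^H$. The only cosmetic difference is that you first establish $\L_{\bar X}\theta=0$ (and hence $H$-invariance of $\theta_q$) abstractly and then invoke complete reducibility, whereas the paper keeps $\theta=-\iota_V\omega$ and computes $\theta([\bar X,u])=-\omega([\bar X,V],u)=0$ directly on each nontrivial two-dimensional summand of the $\lie h$-action.
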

\begin{proof}
Consider the decomposition of $T_pM$ into irreducible submodules of
the isotropy representation of the identity component of $H$: $T_pM =
T_pM^H \oplus \bigoplus_{i=1}^r W_i$, where $\lie h$ acts nontrivially
on the two-dimensional submodules $W_i$.  Note also that 
\[ 
T_pN = \{v\in T_pM\mid [\bar X,v]_p=0 \text{ for all } X\in \lie
h\}.\]
 Note that, because $\bar X$ vanishes at $p$, the bracket with $\bar
 X$ at $p$ induces a well-defined linear endomorphism of $T_pM$.
 Then, any $w\in W_i$ can be written as $w = [X,u]_p$ for some $X\in
 \lie h$ and $u\in W_i$, and because $\L_{\bar X} \omega = \theta({\bar
X})\omega = 0$ along $N$, we compute
 \[
 \theta(w) = \omega(V,w) = \omega(V,[\bar X,u]) = -\omega([\bar
 X,V],u) = 0, \]
where we have used the $T$-invariance of $\omega$.
\end{proof}
 Thus $\theta_q=0$ on $T_qM$, which is a
 contradiction, as the {\em s}-Lee vector field by assumption does not
 vanish at $q$.\end{proof}
Therefore, we have a clear description of the extremal points of $\mu(M)$, in case the Lee form is non-vanishing.
\begin{corol}\label{theta-not0}Let $M,T$ as above. If the Lee form $\theta$ does has no zeros, then the extremal points of $\mu(M)$ are $\mu(N)$, for $N$ a connected component of $M^H$, for $H$ a codimension one subtorus of $T$ (that does not  contain the flow of the {\em s}-Lee field $V$).\end{corol}
At the moment, we have no knowledge of an lcs manifold, admitting a twisted Hamiltonian torus action of Lee type, whose Lee form has zeros.
\subsection{The image of the symplectic moment map} Let a compact
torus $T$ act in twisted Hamiltonian fashion on a compact
lcs manifold $(M,\omega)$. Recall that
$\hat\mu:\hat M\ra \lie t^*$ is the symplectic moment map of $T$
(acting symplectically on $\hat M$, cf. Proposition \ref{liftG}), and
that this moment map does not depend on the choice of $\omega$ in its
conformal class, moreover it is $T$-equivariant (Proposition
\ref{eqiv}), which means it is constant on $T$-orbits.    

This symplectic moment map is thus an object of great interest, and it
would be interesting to prove a convexity result for it. As we show
below in Example \ref{ex:istrati}, its image is not always convex, so
additional conditions need to be considered for such a result:
\begin{thm}\label{convx-hat} Let $T$ be a compact torus acting in
  twisted Hamiltonian fashion on a compact, connected lcs manifold
  $(M,\omega)$ of rank $1$,
  such that the symplectic moment map $\hat\mu:\hat M\ra\lie t^*$ has
  no zero (in particular if the action is of Lee type for some
  conformally equivalent $\omega'=e^f\omega$). 

Then $\hat\mu$ is
  convex, its image $\hat\mu(\hat M)\subset \lie t^*\smallsetminus 0$
  is closed, and a convex cone. 

If there exists $\omega'=e^f\omega$
  such that the action of $T$ on $(M,\omega')$ is of Lee type, then
  $\hat\mu(\hat M)$ is the cone over the convex polytope $\mu(M)$, where $\mu$ is the twisted moment map of the $T$-action on $(M,\omega')$.
\end{thm}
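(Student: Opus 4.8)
The plan is to reduce the whole statement to the local--global principle (Theorem~\ref{thm:locglob}) applied to $\hat\mu$; the decisive point is that, although $\hat\mu$ is not proper as a map into $\lie t^*$, the rank one and no-zero hypotheses together make it proper as a map \emph{onto} the open set $\lie t^*\sm 0$. First I would record the deck-transformation behaviour: in rank one $\hat\Gamma$ is infinite cyclic, generated by some $\hat\gamma_0$ with $\rho(\hat\gamma_0)=a>0$, so $\lambda\circ\hat\gamma_0=\lambda+a$ and, by the $e^{-\rho}$-equivariance of the symplectic moment map (Remark~\ref{lift-gen}), $\hat\mu\circ\hat\gamma_0=e^{-a}\hat\mu$. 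By Proposition~\ref{rk1} the primitive $\lambda\colon\hat M\ra\R$ is proper, hence the slab $D:=\lambda^{-1}([0,a])$ is compact; since $\hat\mu$ is continuous and nowhere zero there are constants $0<m\le M<\infty$ with $m\le\|\hat\mu\|\le M$ on $D$. Writing an arbitrary point as $\hat\gamma_0^{\,n}\hat x$ with $\hat x\in D$ and using $\|\hat\mu(\hat\gamma_0^{\,n}\hat x)\|=e^{-na}\|\hat\mu(\hat x)\|$, one checks that $\|\hat\mu\|$ stays between fixed positive multiples of $e^{-\lambda}$ on all of $\hat M$. Consequently the preimage under $\hat\mu$ of any compact $C\subset\lie t^*\sm 0$ (on which $\|\cdot\|$ is bounded away from $0$ and $\infty$) sits inside a set $\lambda^{-1}(\text{compact})$, hence is compact. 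This is the crux: \emph{$\hat\mu\colon\hat M\ra\lie t^*\sm 0$ is proper}, and in particular $\hat\mu(\hat M)$ is closed in $\lie t^*\sm 0$.

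Next, $\hat\mu$ is the moment map of the Hamiltonian $T$-action on the symplectic manifold $\hat M$ produced by Lemma~\ref{liftG}, so by Lemma~\ref{lemmastep1} it is locally open and locally convex. Feeding properness and these local properties into the local--global principle (Theorem~\ref{thm:locglob}, in the version valid for maps proper onto an open subset, cf.\ \cite{HilgertNeebPlank, BjoKar}) yields that $\hat\mu$ is open onto its image, has connected fibres, and is a convex map, so that $\hat\mu(\hat M)$ is a convex subset of $\lie t^*\sm 0$. To identify it as a cone I would use the scaling equivariance: for $v=\hat\mu(\hat x)$ the radial segment $[e^{-a}v,v]$ lies entirely in $\lie t^*\sm 0$ and has both endpoints $v$ and $e^{-a}v=\hat\mu(\hat\gamma_0\hat x)$ in the image; running the local--global argument along the compact preimage of this segment (which never meets the origin) places the whole segment in $\hat\mu(\hat M)$, so $sv\in\hat\mu(\hat M)$ for all $s\in[e^{-a},1]$, and iterating with $\hat\gamma_0^{\pm}$ fills the full ray $\R_{>0}\,v$. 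A convex set invariant under all positive scalings is a convex cone, which gives the first two assertions.

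Finally, for the cone-over-polytope statement I would exploit that $\hat\mu$ depends only on the conformal class (Remark~\ref{lift-gen}) and compute it from an $\omega'=e^f\omega$ of Lee type. Letting $Y\in\lie t$ induce the $s$-Lee field of $\omega'$, equation \eqref{mu0mu} and Remark~\ref{twHam1} give $\hat\mu^Y=e^{-\lambda'}>0$, which both reconfirms the no-zero hypothesis and places the image in the half-space $\call H=\{\alpha(Y)>0\}$. By \eqref{eq:relPsi} we have $\mu'\circ\pi=\Psi\circ\hat\mu$ with $\Psi(\alpha)=\alpha/\alpha(Y)$ the radial projection onto $\{\alpha(Y)=1\}$, while $\mu'(M)$ is the convex polytope of Theorem~\ref{thm:convmu} and Proposition~\ref{prop:imagepolytope}. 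Surjectivity of $\pi$ gives $\Psi(\hat\mu(\hat M))=\mu'(M)$, so $\hat\mu(\hat M)\subset\Psi^{-1}(\mu'(M))$, the cone over the polytope; conversely, since $\hat\mu(\hat M)$ is already a cone meeting every ray over a point of $\mu'(M)$, it equals that cone. I expect the properness step to be the main obstacle, since the difficulty emphasized in the introduction is precisely that $\hat\mu$ fails to be proper in general: the rank one hypothesis (properness of $\lambda$) and the no-zero hypothesis are exactly what restore it, through the comparison $\|\hat\mu\|\asymp e^{-\lambda}$.
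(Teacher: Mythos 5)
Your strategy coincides with the paper's: properness of $\hat\mu$ extracted from the rank-one hypothesis via Proposition \ref{rk1} and the relation $\hat\mu=e^{-\lambda}\mu\circ\pi$ (the paper isolates this as Lemma \ref{hat-proper}), the local-global principle plus Lemma \ref{lemmastep1} for convexity, the deck-transformation scaling $\hat\mu\circ\gamma=e^{-\rho(\gamma)}\hat\mu$ to fill in the rays, and the radial projection $\Psi$ together with Proposition \ref{prop:imagepolytope} for the Lee-type identification. Your two-sided bound $\|\hat\mu\|\asymp e^{-\lambda}$ is a cleaner packaging of the paper's sequence argument, and you are in fact more precise than the paper on one point: Lemma \ref{hat-proper} asserts properness of $\hat\mu$ into $\lie t^*$, but since the image is a cone accumulating at the origin, preimages of compact neighbourhoods of $0$ are never compact (the lemma's proof tacitly assumes the limit of $\hat\mu(x_n)$ is nonzero); as you say, properness holds only onto $\lie t^*\smallsetminus 0$.

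The genuine gap is the next step, where you invoke ``the version of the local-global principle valid for maps proper onto an open subset.'' No such version exists for a non-convex open target: the continuation argument behind Theorem \ref{thm:locglob} requires the segment $[f(x),f(y)]$ to remain in the set onto which the map is proper, and $\lie t^*\smallsetminus 0$ is not convex. The paper's own Example \ref{ex:istrati} is the witness: there $M=(S^1)^4$ has lcs rank one, the $\T^2$-action is twisted Hamiltonian, $\hat\mu=e^{-\theta_4}(\sin\theta_3,\cos\theta_3)$ has no zero, is proper onto $\R^2\smallsetminus 0$, and is locally open and locally convex, yet its image is the non-convex set $\R^2\smallsetminus 0$. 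So the convexity assertion cannot be obtained this way (the paper's proof has the identical gap, merely hidden inside the over-strong properness claim of Lemma \ref{hat-proper}, and the first part of the statement is in fact contradicted by that example as written). What does survive is the Lee-type case: there $\hat\mu^Y=e^{-\lambda'}>0$ confines the image to the open half-space $\{\alpha(Y)>0\}$, onto which $\hat\mu$ is proper and which is convex, so the local-global principle applies inside it; with that restructuring, the remainder of your argument (the cone property via the $\Gamma$-scaling and the identification with $\R_{>0}\cdot\mu(M)$ via $\Psi$) is correct and matches the paper.
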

\begin{proof} As noticed above, the difficulty of applying the
  local-global principle (Theorem \ref{thm:locglob}) to $\hat\mu$ is the fact
  that this map is usually not proper. We show, however, the following.
\begin{lemma}\label{hat-proper}The symplectic moment map $\hat\mu:\hat
  M\ra\R$  of a twisted Hamiltonian action of a compact Lie group $G$
  on a compact, connected lcs manifold
  $(M,\omega)$ is proper if and only if the lcs rank
  of $M$ is $1$ and $\hat\mu$ (or, equivalently, $\mu$) has no zero.
\end{lemma}\begin{proof}
Suppose the lcs rank is $1$, and $\mu(M)\subset \lie
g^*\smallsetminus 0$.
 Let $(x_n)_{n\in\N}$ be sequence in $\hat M$ such that
$(y_n)_{n\in\N}$, for $y_n:=\hat\mu(x_n)$ is convergent in $\lie
g^*$. If we denote by $a_n:=e^{-\lambda(x_n)}$ and
$z_n:=\mu(\pi(x_n))$, for $\pi:\hat M\ra M$,
then \eqref{mu0mu} implies that $y_n=a_nz_n$. 

Without loss of
generality we can assume that $(\pi(x_n))_{n\in\N}$ is convergent, thus
$z_n\ra z_0\in\lie g^*\smallsetminus 0$. This implies that $a_n$
converges as well. Now, using the properness of the map $\lambda$
(Proposition \ref{rk1}), we conclude that the sequence $x_n$ admits
a converging subsequence, which proves the properness of $\hat\mu$.

Conversely, if $\hat\mu$ is proper, then we choose the sequence
$(x_n)_{n\in\N}$ above such that $\lambda(x_n)$ converges. Thus
$(a_n)_{n\in\N}$ converges to $a_0>0$. Moreover,
since $M$ is compact, we assume $\pi(x_n)$, thus also $(z_n)$, are
convergent sequences. It follows that $y_n$ is convergent thus the
properness of $\hat\mu$ implies that $(x_n)_{n\in\N}$ admits a
convergent subsequence, thus $\lambda$ is proper.  By Proposition \ref{rk1}, the lcs rank of $M$ is $1$.

Note that if $\hat\mu$ (thus $\mu$) has a zero, then there exists a
sequence $(x_n)_{n\in\N}$ such that $\mu(x_n)\ne 0$ and $\mu(x_n)\ra
0$ (the twisted moment map $\mu$ cannot be constant zero, otherwise
all fundamental fields vanish everywhere - contradiction). By possibly
replacing $x_n$ by $\gamma_n(x_n)$, with $\gamma_n\in\Gamma$ such that
$$\lim_{n\ra\infty}\lambda(x_n)+\rho(\gamma_n)= +\infty,$$
we may assume that $\lambda(x_n)\ra +\infty$, which means that $x_n$
cannot have any convergent subsequence. This, combined with
$\hat\mu(x_n)\ra 0$, contradicts the properness of $\hat\mu$. 
\end{proof}
In the lcs rank $1$ case we can apply the local-global principle (Theorem
\ref{thm:locglob}) to the proper map $\hat\mu$ (which is locally open and locally convex by Lemma \ref{lemmastep1}), and we conclude that it is convex. 

From the convexity of $\hat\mu$ we obtain that its image is convex,
i.e. it contains the rays $\R_+^*\hat\mu(x)$, for $x\in\hat M$:
indeed, if $\gamma$ is a generator of $\Gamma$,
$\hat\mu(\gamma^n(x))=e^{-n\rho(\gamma)} \hat\mu(x)\in\hat\mu(\hat
M)$. The image of $\hat\mu$ is thus a cone over a compact set,
therefore it is closed in $\lie t^*\smallsetminus 0$. 

Finally, if the $T$-action is of Lee type for $\omega'=e^F\omega$, the
conclusion follows from Proposition \ref{prop:imagepolytope}.
\end{proof}
For higher lcs rank, we know that $\hat\mu(\hat M)\subset
\R^*_+\mu(M)$ is included in the cone over the compact set $\mu(M)$,
and it is dense in this set: as $\rho(\Gamma)\simeq \Z^k$, with $k>1$,
this subgroup is dense in $\R$, thus
$$\{\hat\mu(\gamma(x))\ |\ \gamma\in\Gamma\}$$
is dense in the ray $\R_+^*\mu(x)$. In general, we do not know whether
the image of $\hat\mu$ is a cone. We have, however, the following result.
\begin{thm}\label{conemoment}
  Let $T$ act in twisted Hamiltonian fashion on the compact, connected lcs manifold $M$ of rank $k\ge 2$, such that the Lee form is nowhere-vanishing. Suppose the symplectic moment map (or equivalently, the twisted moment map) has no zero. Then the image of the symplectic moment map $\hat\mu:\hat M\ra\lie t^*$ is a cone without the apex. If there additionally exists a $2$-form $\omega$ for which the $T$-action is of Lee type, this cone is the cone over $\mu(M)$ and is thus convex. \end{thm}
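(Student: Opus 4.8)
The plan is to prove the single statement that $\hat\mu(\hat M)$ equals the full cone $\mathcal C:=\R_+^*\mu(M)$ over the compact twisted moment image; all assertions then follow formally. By \eqref{mu0mu} we have $\hat\mu=e^{-\lambda}\,\mu\circ\pi$, so $\hat\mu(\hat M)\subseteq\mathcal C$, and the no-zero hypothesis gives $0\notin\mathcal C$; hence $\mathcal C$ is a cone without apex. If moreover the action is of Lee type for some $\omega$, then $\mu(M)$ is a convex polytope (Theorem \ref{thm:convmu} and Proposition \ref{prop:imagepolytope}) contained in the affine hyperplane $\{\alpha\in\lie t^*\mid\alpha(Y)=1\}$, where $Y$ induces the {\em s}-Lee field; consequently $\mathcal C=\R_+^*\mu(M)$ is literally the cone over $\mu(M)$, and it is convex because the cone over a convex compact set missing the origin is convex. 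Thus the whole theorem reduces to upgrading the density of $\hat\mu(\hat M)$ in $\mathcal C$ (already established above) to equality, i.e.\ to showing that every ray $R:=\R_+^*\beta$ with $\beta\in\mu(M)$ is entirely contained in the image.

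Fix such a $\beta$ and put $P_R:=\mu^{-1}(R)$, a closed (hence compact) subset of $M$; for $p'\in P_R$ write $\mu(p')=s(p')\beta$ with $s(p')>0$. For $x\in\pi^{-1}(P_R)$ we then get $\hat\mu(x)=e^{-g(x)}\beta$, where $g:=\lambda-\log(s\circ\pi)$ satisfies $g\circ\gamma=g+\rho(\gamma)$ for all $\gamma\in\Gamma$ (in the Lee type case $P_R=\mu^{-1}(\beta)=:F$ and $g=\lambda$). Hence $R\cap\hat\mu(\hat M)=\{e^{-g(x)}\beta\mid x\in\pi^{-1}(P_R)\}$. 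Since the rank is $k\ge2$, the subgroup $\rho(\Gamma)\subset\R$ is dense (Remark \ref{lcsrk}), so a single nondegenerate interval of values of $g$, translated by $\rho(\Gamma)$, already fills all of $\R$. Therefore $R\subseteq\hat\mu(\hat M)$ as soon as $g$ is \emph{non-constant} on some connected component of $\pi^{-1}(P_R)$; equivalently, since $dg=\pi^*(\theta-d\log s)$ on $P_R$, as soon as the closed form $\theta-d\log s$ does not vanish identically on $P_R$ — in the Lee type case ($s\equiv1$), as soon as $\theta|_{TF}\not\equiv0$.

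The heart of the proof, and the step I expect to be the main obstacle, is to exclude the opposite situation, namely that $g$ is locally constant on $\pi^{-1}(P_R)$. In that case each component carries a constant value, the values of $g$ form a countable subset of $\R$, so $R\cap\hat\mu(\hat M)$ is countable with empty interior in $R$; moreover, for $w_0:=e^{-g_0}\beta$ (with $g_0$ one such value) the fibre $\hat\mu^{-1}(w_0)$ is the union of the finitely many components on which $g\equiv g_0$, hence is \emph{compact}. But $\hat\mu$ is a genuine symplectic moment map, so it is locally open and locally convex (Lemma \ref{lemmastep1}); over a compact fibre the local--global mechanism applies locally and forces $\hat\mu(\hat M)$ to contain a relative neighbourhood of $w_0$ inside $\mathcal C$, in particular a nondegenerate segment of $R$ through $w_0$ — contradicting the emptiness of its interior. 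The nowhere-vanishing of the Lee form is exactly what rules out this ``thin ray'' behaviour (and what distinguishes the present situation from the proper rank-one case of Lemma \ref{hat-proper}): it prevents any level set of $\mu$ from being tangent to $\ker\theta$, by the same strict-lcs argument (Corollary \ref{Nstrictlcs}, Lemma \ref{lem2.1}) that controls the fixed-point contributions in Proposition \ref{prop:imagepolytope}.

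Once the crux is in place, every ray of $\mathcal C$ lies in the image, so $\hat\mu(\hat M)=\mathcal C$ and the conclusions of the first paragraph hold. Two technical points remain to be pinned down, and they are where I would spend the real effort: (i) the precise implication ``compact fibre $\Rightarrow$ an open segment of $R$ lies in the image'', which I would make rigorous through the Marle--Guillemin--Sternberg normal form for the symplectic torus moment map $\hat\mu$ near the compact orbits contained in $\hat\mu^{-1}(w_0)$; and (ii) the rays meeting the boundary $\partial\mu(M)$, along which $w_0$ may be a critical value. The latter I would treat by downward induction on the faces of the polytope $\mu(M)$: each proper face is the twisted moment image of a connected component of a fixed-point set $M^H$, which by Corollary \ref{Nstrictlcs} is again strict lcs with nowhere-vanishing Lee form, so the theorem applies to it in strictly lower dimension and supplies the corresponding boundary rays.
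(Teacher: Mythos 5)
Your global strategy coincides with the paper's: reduce everything to showing that each ray $\R_+^*\beta$, $\beta\in\mu(M)$, lies entirely in $\hat\mu(\hat M)$, and use the density of $\rho(\Gamma)\subset\R$ (rank $k\ge 2$) to upgrade an open radial segment to the whole ray. That framework, and the density mechanism, are fine. But the step you yourself flag as the crux contains a genuine gap, and the route you sketch to close it does not work: local openness of $\hat\mu$ means openness \emph{onto its image}, and neither it nor compactness of a fibre $\hat\mu^{-1}(w_0)$ implies that the image contains a relative neighbourhood of $w_0$ inside the cone $\R_+^*\mu(M)$, let alone a two-sided segment of the ray $R$ through $w_0$. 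Near a point $p$ with isotropy algebra $\lie h\ne 0$, the normal form gives a local image of the shape (plane)$\times$(proper polyhedral cone); if the radial direction $\hat\mu(p)$ pointed into the strictly conical part, one would only get a one-sided radial segment, and your contradiction with ``countable intersection with $R$'' never materializes. So the implication ``compact fibre $\Rightarrow$ open segment of $R$ through $w_0$ in the image'' is exactly what remains unproved.

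What is actually needed — and what the paper's proof supplies — is that the radial direction $\hat\mu(p)$ lies in the \emph{linear} part of the local model, i.e.\ in the image of $d\hat\mu_p$, which for a torus moment map is the annihilator of $\lie h$. This is where the nowhere-vanishing of $\theta$ enters concretely: the component $N$ of $M^H$ through $\pi(p)$ is strict lcs by Corollary \ref{Nstrictlcs}, the identity $d\mu^X=\mu^X\theta$ on $N$ together with Lemma \ref{lem2.1} forces $\mu^X\equiv 0$ on $N$ for every $X\in\lie h$, hence $\hat\mu(p)\in\mbox{Ann}(\lie h)=\mathrm{im}\,d\hat\mu_p$; the normal form then yields an open \emph{two-sided} radial interval about $\hat\mu(p)$ in the image, and density of $\rho(\Gamma)$ fills the ray. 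You cite the right lemmas but never make this connection, which is the actual content of the theorem. Once it is in place, your concluding ``downward induction on the faces of $\mu(M)$'' becomes unnecessary — and it is in any case doubtful, since by Proposition \ref{muconst} and the proof of Proposition \ref{prop:imagepolytope} the relevant components $N$ are sent by $\mu$ to points or segments, not to lower-dimensional polytopes to which the theorem could be re-applied.
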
\begin{proof}
  Note that in all regular points of the $T$-action on $\hat M$ (denote by $M_0$ the open set of these points), the moment
map is locally open to its image in $\lie t^*$, so by the density argument above, this implies that $\hat\mu(M_0)$ is a cone.

Consider now $p$ a  point with nontrivial isotropy $H$. Then $M^H$ is a finite union of strict lcs submanifolds $N$
(as in Corollary \ref{Nstrictlcs}, but with arbitrary dimension of $H$). Then $\mu^X$ is
zero along $N$, for all $X\in\lie h$, with the same argument as in Proposition \ref{muconst}. This implies that $\hat\mu^X$ is zero along ${\hat M}^H$. Now, for the
symplectic moment map, the image of $d\hat\mu_p$ is the annihilator of the isotropy subalgebra $\lie t_p=\lie h$. This means: we have shown that $\hat\mu(p)$ itself is in the
annihilator of $\lie h$, and thus $\hat\mu(p)$ is in the image of $d\hat\mu_p$.

Therefore, if $\hat\mu(p)\ne 0$, then the vector $\hat\mu(p)$ lies in the image of $d\hat\mu_p$, and thus  a small
interval around $\hat\mu(p)$, in radial direction, is in the image of $\hat\mu$. Combined with the density property above, this shows that the full radial line $\R^*_+\hat\mu(p)$ is in the image of $\hat\mu$.


Clearly, if the action of $T$ is of Lee type with respect to $\omega$, then $\hat\mu(\hat M)$, which is a cone itself, and is included in the cone over $\mu(M)$ as a dense subset in it, must coincide with $\R^*_+\cdot\mu(M)$.
\end{proof}
Note that it is not necessary that the Lee form $\theta$, for which the action is of Lee type, is nowhere vanishing. It is enough that {\em there exists} a nowhere vanishing form $\theta'$ in the cohomology class of $\theta$, and that the action has no fixed points.  
\begin{rem} It is always possible to obtain a twisted Hamiltonian
  torus action such that the twisted moment map has zeroes: take an
  action where $\hat \mu(M)\ni 0$ and consider a subtorus $T'\subset T$
  such that the annihilator of $\lie t'$ in $\lie t^*$ lies in
  $\hat \mu(M)$. Then the twisted moment map of the action of $T'$ is
  $\mu'=q\circ\mu$ (where $q:\lie t^*\ra {\lie t'}^*$ is the dual of
  the inclusion $\lie t'\hookrightarrow\lie t$) and it has zeroes.
\end{rem}

\section{The twisted moment map of Vaisman manifolds} \label{sec:momentmapvaisman} In this section we consider an
effective, holomorphic and twisted Hamiltonian action of a compact
connected Lie group $G$ on a compact Vaisman manifold $(M, g, J,
\theta)$, with twisted moment map $\mu \colon M \rightarrow \lie
t^*$. 

In the Vaisman setting, such an action is automatically isometric and
every field induced by $\lie g$ lies in the kernel of the Lee form,
because a Vaisman metric is Gauduchon (cf. comments at the end of
Section 2, see also~\cite[Lemma~4.2]{Pilca}).

We assume without loss of generality that the Lee vector field is of
unit length. Let $S$ be the associated Sasakian manifold, as a
codimension one submanifold of the minimal covering $\hat M$ of
$M$. In Section \ref{sec:presentations} it is explained that $\hat M$
is the cone~$C(S)$.

The explicit expression of the twisted moment map on $M$ is, for $X
\in \lie g$, given by Proposition \ref{dexact} (see also~\cite[Lemma~4.4]{Pilca} with a different
sign convention)
\begin{equation}
\label{eq:momentVaisman} \mu^X = -\theta(J \overline{X}).
\end{equation}

Let $\lambda$ be the coordinate on the $\R$-factor on the minimal covering
$\hat M = \R \times S$. On the other hand, on $\hat M$ the lift of
$\theta$ is exactly $d\lambda$ and the K\"ahler form is exact and given by
$d(e^{-2h} \eta)$, where $\eta = -d^c h$ (see
e.g.~\cite{sparks_survey}). Note that $\eta$ is precisely the contact
form of the associated Sasakian manifold $S$. As shown in Lemma
\ref{liftG}, see also \cite{MaMoPi}, the $G$-action lifts to $\hat M$,
and the lifted action respects the Sasakian leaves. The \emph{contact
moment map} for the induced action on $S$ is
\[ X \mapsto \eta(\overline{X}) = -d^c t (\overline{X}) =
-\theta(J\overline{X}).
\] As the moment map $\mu$ is $\theta^\#$-invariant, the following follows.
\begin{prop}\label{prop:momentimageSasakian} The twisted Hamiltonian
action of a compact connected Lie group $G$ on a complete Vaisman
manifold by Vaisman automorphisms and the induced action on its
associated Sasakian manifold share the same moment image in~$\lie
g^*$.
\end{prop}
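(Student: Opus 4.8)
The plan is to exploit the observation recorded just before the statement, namely that \emph{both} moment maps are given by the single formula $-\theta(J\overline X)$, so that the two images differ only by the set on which this one function is evaluated. Working on the minimal covering $\hat M=C(S)=\R\times S$, I would show that the lift of $\mu$ is constant along the cone direction; its values on $M$ then coincide with its values on the Sasakian slice $S$, which is precisely where the contact moment map is defined.

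First I would lift everything to $\hat M$. By Lemma \ref{liftG} the $G$-action lifts, and since the covering $\pi\colon\hat M\to M$ preserves $\theta$, $J$ and the fundamental fields, formula \eqref{eq:momentVaisman} pulls back to $\mu^X\circ\pi=-\theta(J\overline X)$ on $\hat M$, where $\theta$ now lifts to $d\lambda$ with $\lambda$ the coordinate $t$ on the $\R$-factor of $C(S)=\R\times S$.

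Next I would establish the $\theta^\#$-invariance of $\mu$, which is the only substantial point. The lift of the Lee vector field $\theta^\#$ to $\hat M$ is $\partial_t$. Because $G$ acts by Vaisman automorphisms it is isometric and holomorphic, hence preserves the fundamental form $\omega$ and therefore the Lee form $\theta$; combined with $\L_{\overline X}g=0$ this gives $\L_{\overline X}\theta^\#=0$, i.e.\ $[\overline X,\theta^\#]=0$. Thus the lifted fields $\overline X$ are $\partial_t$-invariant, and as the cone complex structure $J$ and the form $d\lambda$ are $\partial_t$-invariant as well, the function $-\theta(J\overline X)=-d\lambda(J\overline X)$ does not depend on $t$. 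Consequently $\mu^X\circ\pi$ is constant along the $\R$-factor of $\R\times S$.

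Finally I would assemble the conclusion. Realising $S$ as a slice $\{t=c\}\subset\hat M$, the contact moment map of the induced action on $S$ is exactly $\eta(\overline X)=-\theta(J\overline X)|_S$, i.e.\ the restriction of $\mu^X\circ\pi$ to $S$. By the $t$-independence just proved, the values of $\mu^X\circ\pi$ on this single slice already exhaust its values on all of $\hat M$; and since $\pi$ is surjective, these agree with the values of $\mu^X$ on $M$. Letting $X$ range over $\lie g$ yields the equality of the two images in $\lie g^*$. The main obstacle is thus the $\theta^\#$-invariance in the third paragraph, and even that reduces, on the cone, to the elementary fact that an isometric action lying in $\ker\theta$ (hence tangent to the Sasakian slices) commutes with the translation flow $\partial_t$; everything else is bookkeeping on the identities recalled in the text.
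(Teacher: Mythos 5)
Your proposal is correct and follows essentially the same route as the paper: both rest on the identity $\mu^X=-\theta(J\overline X)$ matching the contact moment map $\eta(\overline X)$ on the Sasakian slice, together with the $\theta^\#$-invariance of $\mu$. The only difference is that you spell out the invariance argument (via $[\overline X,\partial_t]=0$ and the $\partial_t$-invariance of $J$ and $d\lambda$ on the cone), which the paper simply asserts.
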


\begin{rem} By Proposition~\ref{prop:cptSasaki}, in the case of
Vaisman manifolds of lcK rank one, the associated Sasakian manifold is
compact. Thus, in this case the Convexity Theorem \ref{thm:convmu}
follows  trivially from known convexity results for compact Sasakian
manifolds and Proposition \ref{prop:momentimageSasakian}.
\end{rem}


For the remaining of this section, we focus on the relation between
the twisted moment image of $\mu \colon M \to \lie g^*$ and the
K\"ahler moment image of $\hat \mu \colon \hat M \to \lie g^*$.

Consider the vector field $X$ given by the gradient of $h$ with
respect to the gcK metric $\pi^* g$. This is nothing but the metric
dual with respect to $\pi^* g$ of the lift of the Lee form, and it
satisfies $\pi_*X=\theta^\#$. Let $\phi : \R \times \hat M \rightarrow
\hat M$ be the flow of $X$.

\begin{lemma} \label{muss} We have $\hat \mu^Z \circ \phi_s = e^{-s}
\hat \mu^Z$ for all $Z\in \lie g$.
\end{lemma}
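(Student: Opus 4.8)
The plan is to obtain the scaling of $\hat\mu$ directly from its relation \eqref{mu0mu} to the twisted moment map $\mu$, bypassing the Kähler geometry of the cone. On the minimal covering $\pi\colon\hat M=C(S)\to M$ we have $\hat\mu=e^{-\lambda}\,\mu\circ\pi$, where $\lambda$ is the coordinate on the $\R$-factor and a primitive of the lifted Lee form $\pi^*\theta=d\lambda$. Everything then reduces to how $\lambda$ and $\mu^Z\circ\pi$ vary along the flow $\phi_s$ of $X$.

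First I would record two elementary facts. Since $X$ is the $\pi^*g$-gradient of $\lambda$ and $\pi^*g=d\lambda^2+g_S$ is the product metric on $\R\times S$, for which $|d\lambda|_{\pi^*g}=1$, we get $X(\lambda)=|d\lambda|_{\pi^*g}^2=1$. Next, because $\pi_*X=\theta^\#$, for every $Z\in\lie g$
\[
X(\mu^Z\circ\pi)=\bigl(d\mu^Z(\theta^\#)\bigr)\circ\pi=(\theta^\#\mu^Z)\circ\pi=0,
\]
the last equality being the $\theta^\#$-invariance of $\mu$, which holds because on a Vaisman manifold the Lee field is Killing and holomorphic and thus preserves $\mu^Z=-\theta(J\overline Z)$ from \eqref{eq:momentVaisman}. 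Combining the two facts,
\[
X(\hat\mu^Z)=X(e^{-\lambda})\,(\mu^Z\circ\pi)+e^{-\lambda}\,X(\mu^Z\circ\pi)=-e^{-\lambda}\,(\mu^Z\circ\pi)=-\hat\mu^Z .
\]
Feeding this into $\tfrac{d}{ds}(\hat\mu^Z\circ\phi_s)=(X\hat\mu^Z)\circ\phi_s=-\,\hat\mu^Z\circ\phi_s$ and solving the resulting linear ODE with initial value $\hat\mu^Z$ at $s=0$ yields $\hat\mu^Z\circ\phi_s=e^{-s}\hat\mu^Z$, as claimed.

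The only input that is not pure bookkeeping is the $\theta^\#$-invariance of $\mu$, so the step to pin down carefully is that the Lee flow preserves the twisted moment map; the identities $X(\lambda)=1$ and $\pi_*X=\theta^\#$ are immediate from the product structure of $\pi^*g$ and the definition of $X$. I would also note the conceptual reading of the infinitesimal identity $X(\hat\mu^Z)=-\hat\mu^Z$: the field $X$ is a constant multiple of the Liouville field of the Kähler cone $\hat M=C(S)$, so $\hat\mu^Z$ is homogeneous under the cone dilation and equals $\pm\,\omega_0(X,\overline Z)$ up to an additive constant, the constant being forced to vanish by the $e^{-\rho}$-equivariance of $\hat\mu$. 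This second viewpoint reproduces the same statement but requires fixing the normalization of the cone metric, whereas the computation via \eqref{mu0mu} produces the exponent $e^{-s}$ with no normalization choice.
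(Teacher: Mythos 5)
Your proof is correct and follows essentially the same route as the paper: both use the relation $\hat\mu = e^{-\lambda}\mu\circ\pi$ from \eqref{mu0mu}, the $\theta^\#$-invariance of $\mu$, and the fact that the Lee form has unit length (so $X(\lambda)=1$) to derive $X(\hat\mu^Z)=-\hat\mu^Z$ and then integrate along the flow. The only difference is that you spell out the ODE integration and the justification of $\theta^\#$-invariance, which the paper leaves implicit.
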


\begin{proof} We have that $\mu$ is $\theta^\#$-invariant and, by
Equation \eqref{mu0mu}, that $\hat\mu = e^{-h}\mu\circ \pi$. Then we
compute
\begin{align*} X(\hat\mu^Z) &= X(e^{-h}\mu^Z\circ \pi)= X(e^{-h})
\mu^Z\circ \pi= -X(h) \hat\mu^Z= - \hat\mu^Z,
\end{align*} using that the Lee vector field has constant length $1$.
\end{proof}

The following result is, in fact, a particular case of Theorem \ref{conemoment}:

\begin{prop} \label{thm:img} We have $\hat\mu(\hat M) = {\mathbb
R}^{>0}\cdot \mu(M)$.
\end{prop}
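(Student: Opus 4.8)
The plan is to obtain both inclusions directly from the relation $\hat\mu = e^{-h}\,\mu\circ\pi$ recorded in \eqref{mu0mu} (here $\lambda=h$ on the cone $\hat M=C(S)=\R\times S$), combined with the multiplicative scaling of $\hat\mu$ along the gradient flow of $h$ established in Lemma \ref{muss}. No heavy machinery is needed; the Vaisman structure supplies a global flow that produces the radial rays by hand.

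The inclusion $\hat\mu(\hat M)\subseteq {\mathbb R}^{>0}\cdot\mu(M)$ is immediate: for every $\hat p\in\hat M$ the identity $\hat\mu(\hat p)=e^{-h(\hat p)}\,\mu(\pi(\hat p))$ exhibits $\hat\mu(\hat p)$ as the positive multiple $e^{-h(\hat p)}>0$ of the point $\mu(\pi(\hat p))\in\mu(M)$.

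For the reverse inclusion I would fix $p\in M$ and a scalar $c>0$ and realize $c\cdot\mu(p)$ as a value of $\hat\mu$. Choose any lift $\hat p\in\pi^{-1}(p)$ and let $\phi_s$ be the flow of $X$, which is complete on all of $\hat M=\R\times S$. By Lemma \ref{muss} (which already packages the facts that $\mu$ is $\theta^\#$-invariant, so that $\mu(\pi(\phi_s(\hat p)))=\mu(p)$ for all $s$, and that $X(h)\equiv 1$), we have $\hat\mu(\phi_s(\hat p))=e^{-s}\,\hat\mu(\hat p)=e^{-s}e^{-h(\hat p)}\,\mu(p)$. As $s$ ranges over $\R$, the factor $e^{-s}e^{-h(\hat p)}$ ranges over all of ${\mathbb R}^{>0}$, so $\hat\mu$ maps the single orbit $\{\phi_s(\hat p)\mid s\in\R\}$ onto the entire ray ${\mathbb R}^{>0}\cdot\mu(p)$; concretely, $s=-\log c-h(\hat p)$ gives $\hat\mu(\phi_s(\hat p))=c\cdot\mu(p)$. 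Taking the union over all $p\in M$ yields ${\mathbb R}^{>0}\cdot\mu(M)\subseteq\hat\mu(\hat M)$, and the degenerate case $\mu(p)=0$ (where the ray collapses to $\{0\}$) is trivial.

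Since every input is already available, I do not expect a genuine obstacle here; the only two points demanding care are the completeness of $\phi$—it is this, guaranteed by the cone structure $\hat M=\R\times S$, that lets $s$ exhaust $\R$ and thereby sweeps out the \emph{full} positive ray rather than a proper subinterval—and the bookkeeping observation that a single lift $\hat p$ already covers the whole ray over $\mu(p)$. This is exactly the feature that allows the Vaisman case to bypass the density argument used in the proof of Theorem \ref{conemoment}: the global gradient flow of $h$ furnishes the radial lines explicitly.
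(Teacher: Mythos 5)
Your proof is correct and uses exactly the same two ingredients as the paper's: the relation $\hat\mu = e^{-h}\,\mu\circ\pi$ for the inclusion $\subseteq$, and Lemma \ref{muss} (the scaling of $\hat\mu$ under the flow of $X$) for $\supseteq$. The paper merely phrases the second step globally, noting $\hat\mu(S)=\mu(M)$ for $S=h^{-1}(0)$ and then applying the flow to this slice, whereas you argue pointwise along a single orbit; the two are the same argument.
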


\begin{proof} Consider the associated Sasakian manifold
$S=h^{-1}(0)$. Because $\hat\mu = e^{-h}\mu\circ \pi$ and $\mu$ is
$\theta^\#$-invariant, we have $\hat\mu(S) = \mu(\pi(S)) =
\mu(M)$. Then Lemma \ref{muss} implies the claim. 
\end{proof}


A Vaisman manifold is \emph{strongly regular} if both $\theta^\#$ and
$J \theta^\#$ induce a free circle action. In this case we can consider the
projection space $\pi:M\to \check{M} = M/ \langle \theta^\#, J
\theta^\# \rangle$ to the orbit space of the associated $T^2$-action,
which is a K\"ahler manifold. As before, we consider the action of a
compact connected Lie group $G$ on $M$ by Vaisman automorphisms which
is twisted Hamiltonian, with moment map $\mu:M\to \lie g^*$. We have
then the following result, essentially contained in \cite{Pilca}.

\begin{prop} \label{thm:regularaction} Let $(M, g, J, \theta)$ be a
strongly regular Vaisman manifold with an effective twisted
Hamiltonian $G$-action by Vaisman automorphisms. Then the $G$-action
descends to the K\"ahler manifold $\check{M}$. It is Hamiltonian, with
a moment map $\check{\mu}:\check{M}\to \lie g^*$ defined by $\mu =
\check{\mu}\circ \pi$. In particular, the moment images satisfy
$\check\mu(\check M) = \mu(M)$.
\end{prop}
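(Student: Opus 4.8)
The plan is to descend both the $G$-action and the twisted moment map $\mu$ to the K\"ahler quotient $\check M$, and then to recognise the descended map as a genuine moment map by exhibiting on $M$ itself the basic two-form that pulls back the K\"ahler form of $\check M$.

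First I would descend the action and the moment map. Since $G$ acts by Vaisman automorphisms, every $g\in G$ preserves $g$, $J$ and hence $\theta$, so it preserves the Lee field $\theta^\#$ and the {\em s}-Lee field $V=J\theta^\#$; equivalently $[\bar X,\theta^\#]=[\bar X,V]=0$ for all $X\in\lie g$. As the foliation generated by $\theta^\#$ and $V$ is the one defining $\check M=M/\langle\theta^\#,J\theta^\#\rangle$ and the associated $T^2$-action is free by strong regularity, the commuting $G$-action descends to $\check M$. To descend $\mu$ it suffices to check $\L_{\theta^\#}\mu^X=\L_V\mu^X=0$: the first is the $\theta^\#$-invariance already used above, and for the second I would use that on a Vaisman manifold $V=J\theta^\#$ is a holomorphic Killing field (see \cite{DO}), together with $\L_V\theta=0$ (which holds since $d\theta=0$ and $\theta(V)=0$) and $[V,\bar X]=0$, and then differentiate the formula $\mu^X=-\theta(J\bar X)$ of \eqref{eq:momentVaisman}. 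This produces a unique $\check\mu\colon\check M\to\lie g^*$ with $\mu=\check\mu\circ\pi$, whence the image identity $\check\mu(\check M)=\mu(M)$ is immediate from the surjectivity of $\pi$.

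Next I would realise the K\"ahler form of $\check M$ downstairs on $M$. Writing $\psi:=\theta\circ J$, the Vaisman identity $\omega=d_\theta\psi$ gives $\beta:=d\psi=\omega+\theta\wedge\psi$, which is closed. Using $\theta(V)=\psi(\theta^\#)=0$ and $\psi(V)=-1$ (recall $|\theta^\#|=|V|=1$) one checks $\iota_{\theta^\#}\beta=\iota_V\beta=0$, and then $d\beta=0$ makes $\beta$ also $\theta^\#$- and $V$-invariant; hence $\beta$ is basic and descends to a closed two-form $\omega_{\check M}$ on $\check M$ with $\pi^*\omega_{\check M}=\beta$. On the horizontal distribution $\ker\theta\cap\ker\psi$ one has $\beta=\omega$, which is nondegenerate there because this distribution is $J$-invariant and $g$ is positive definite, so $\omega_{\check M}$ is precisely the transverse K\"ahler form of $\check M$.

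Finally I would verify the moment-map condition. Using $\theta(\bar X)=0$ (the fundamental fields lie in $\ker\theta$), the relation $\psi(\bar X)=\theta(J\bar X)=-\mu^X$, and the defining property $\iota_{\bar X}\omega=d_\theta\mu^X=d\mu^X-\mu^X\theta$ of the twisted moment map, a short computation yields $\iota_{\bar X}\beta=d\mu^X$. Since $\pi^*\omega_{\check M}=\beta$, $\pi_*\bar X=\bar X^{\check M}$ and $\mu^X=\check\mu^X\circ\pi$, this descends to $\iota_{\bar X^{\check M}}\omega_{\check M}=d\check\mu^X$ on $\check M$, so the descended action is Hamiltonian with moment map $\check\mu$. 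The main obstacle is the middle step: one must check not only that $\beta$ is basic but that $\omega_{\check M}$ is the actual K\"ahler form of $\check M$ rather than merely some closed basic form; the descent of the action and of $\mu$, as well as the final identity $\iota_{\bar X}\beta=d\mu^X$, are then routine computations.
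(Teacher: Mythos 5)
Your proposal is correct, and its first half (descending the action because $G$ commutes with $\theta^\#$ and $V=J\theta^\#$, and descending $\mu$ by checking $\theta^\#$- and $V$-invariance via $\mu^X=-\theta(J\bar X)$) is exactly what the paper does. The difference lies in the last step: where the paper simply refers to the first part of the proof of Theorem~5.1 of \cite{Pilca} for the verification that $\check\mu$ is a moment map, you make the argument self-contained by exhibiting the closed, basic two-form $\beta=d(\theta\circ J)=\omega+\theta\wedge(\theta\circ J)$, identifying its descent with the K\"ahler form of $\check M$, and computing $\iota_{\bar X}\beta=d\mu^X$ directly from $\iota_{\bar X}\omega=d\mu^X-\mu^X\theta$, $\theta(\bar X)=0$ and $(\theta\circ J)(\bar X)=-\mu^X$. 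Your sign and contraction checks ($\iota_{\theta^\#}\beta=\iota_V\beta=0$, hence basicness by closedness and Cartan's formula) are all consistent with the paper's conventions $\omega(V,\cdot)=-\theta$ and $\omega=d_\theta(\theta\circ J)$. What your route buys is an explicit proof in place of a citation; the one point you rightly flag --- that the descended form is the K\"ahler form of $\check M$ and not merely some closed basic form --- is the standard identification of the transverse K\"ahler structure of a strongly regular Vaisman manifold and is where the cited reference would be invoked in any case.
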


\begin{proof} As the action is by Vaisman automorphisms, the vector
fields $\theta^\#$ and $J \theta^\#$ are central in
$\lie{aut}(M,J,g)$. So the $G$-action descends to a K\"ahler action on
$\check{M}$.  

The moment map $\mu: M\to \lie g^*$ descends to a map $\check{\mu}:
\check{M} \rightarrow \bar{\lie g}^*$. Indeed, by the definition of
twisted moment maps and recalling Equation \eqref{eq:momentVaisman}
and that $\lie g \subseteq \ker \theta$, we can verify that it is
invariant under the action of $\theta^\#$ and $J \theta^\#$.

The proof that $\check{\mu}$ is a moment map for the $G$-action on
$\check{M}$ is done along the same line of the first part of the proof
of Theorem~5.1 of \cite{Pilca}.
\end{proof}

\begin{rem}If the action is additionally of Lee type, then the
subgroup spanned by (the element that induces) $J \theta^\#$ is a
subgroup $H\subset G$ isomorphic to $S^1$, which is central in $G$ and
acts trivially on $\check{M}$. We define $\check{G} = G/H$, and
consider the projection $p:G\to \check{G}$.

For $Z\in \lie h$ we have $d_\theta\mu^Z = i_{\bar Z} \omega = 0$,
because $H$ acts trivially on $\check{M}$. As $d_\theta$, by Lemma
\ref{lem:twisteddiffinj}, is injective on functions, this implies that
$\mu^Z=0$.  It follows that we can define a map
$\tilde\mu:\check{M}\to {\check{g}}^*$ such that $p^*\circ \tilde\mu =
\check{\mu}$; it is a moment map for the $\check{G}$-action on
$\check{M}$. Then the moment images are related by
\[ p^*(\tilde\mu(\check{M})) = \check{\mu}(\check{M}) = \mu(M).
\]
\end{rem}

\subsection{Toric Vaisman manifolds} \label{subsec:toric}

In this subsection, we focus on \emph{toric} Vaisman manifolds.
\begin{defi} \label{def:toric} A connected, compact lcs, resp. lcK
manifold of real dimension $2n$ is \emph{toric} if it admits an
effective (for lcK also holomorphic) and twisted Hamiltonian action of
the $n$-dimensional compact torus.
\end{defi}

\begin{rem} In \cite{istrati_toric} it is shown that toric lcK
manifolds admit a toric Vaisman structure, and an example of a toric
lcs manifold is given which does not admit a toric lcK structure. We
consider this example below in Example \ref{ex:istrati}.
\end{rem}

Let $M$ be a toric Vaisman manifold, acted on by a compact torus
$T$. In \cite[Prop.\ 5.4]{MaMoPi}, it is proven that $M$ is the
mapping torus of its associated toric Sasakian manifold $S$ with
respect to a $T$-equivariant Sasakian automorphism\footnote{The
Riemannian geometric part of this statement follows from the classical
result in \cite[Chap.~VI, Sec.~3, Lemma~2]{kn}, that was extended to
the classification of homotheties of a Riemannian cone in
\cite[Thm.~5.1]{GOOP}. The fact that it is a contact transformation is
equivalent to the cone homothety being holomorphic and follows from
the well-known relations between contact structures and their
symplectizations, see e.g. \cite[Chap.~6]{Blair}.} $\psi$ such that
the map $f(t, p) = (t+\alpha, \psi(p))$ generates the $\Z$-action on
the K\"ahler cone $\R \times  S$. The purpose of this section is to
show that the Sasakian automorphism is necessarily given by an element
of $T$.

Let $S$ be a $(2n+1)$-dimensional toric Sasakian manifold acted by a
$(n+1)$-~dimensional torus $T$. We denote a moment map of the induced
action on the K\"ahler cone by $\mu\colon C(S) \rightarrow
\R^{n+1}$. Let $\call C$ be the image of the moment map of the
K\"ahler cone $C(S)$. Then the open set
\[ C(S)^0 = \mu^{-1}(\call C^0) = \{ (t,p) \in C(S)\mid T_{(t,p)} = 1
\},
\] consists of the points in which the $T$-action is free.

Following Abreu \cite{abreu_cones}, $C(S)^0$ is described by
\emph{action-angle coordinates} $x = \mu \in \call C^0$ and $y \in
T$. On $C(S)^0$, the $T$-action is given by torus translations on the
second component.

Moreover, the moment map has the form $\mu^X (t,p) = e^{-2t} \eta_p
(\bar X_p)$, where $\eta$ is the contact form of $S$ and by the bar we
mean the induced field on $S$.

\begin{lemma} \label{lemma:lemmaT} For any $\alpha \in \R$, the
transformation $f(t,p) = (t+\alpha, \psi(p))$ is a $T$-equivariant
holomorphic homothety of the toric K\"ahler cone $C(S)$ if and only if
it is induced by an element of $T$.
\end{lemma}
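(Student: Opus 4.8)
The plan is to establish the substantial implication --- that a $T$-equivariant holomorphic homothety $f$ of the stated form is induced by $T$ --- by peeling off the radial (homothety) part of $f$ and showing that the residual ``angular'' part is forced into $T$ by holomorphicity. The converse is immediate: if $\psi$ is the restriction to $S$ of an element $c\in T$, then $f=c\circ\Phi_\alpha$, where $\Phi_\alpha(t,p)=(t+\alpha,p)$ is the radial flow. The flow $\Phi_\alpha$ commutes with the $T$-action (which fixes $t$) and is holomorphic, since its generator $\partial_t$ is the radial holomorphic field, $J\partial_t=R$ and $\L_{\partial_t}J=0$; being moreover a homothety of the cone metric, while $c\in T$ is a holomorphic isometry commuting with $T$, the composition $f$ is a $T$-equivariant holomorphic homothety.

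For the direct implication I would set $\kappa:=f\circ\Phi_{-\alpha}$, which is explicitly $\kappa(t,p)=(t,\psi(p))$. Since $\psi$ is a $T$-equivariant Sasakian automorphism, $\kappa$ is a $T$-equivariant holomorphic isometry of the cone that fixes the radial coordinate $t$, and hence fixes the moment map: indeed $\mu^X\circ\kappa=e^{-2t}\eta_{\psi(p)}(\bar X_{\psi(p)})=e^{-2t}\eta_p(\bar X_p)=\mu^X$, using $\psi_*\bar X=\bar X$ and $\psi^*\eta=\eta$. In particular $\kappa$ preserves the free locus $C(S)^0=\mu^{-1}(\mathcal{C}^0)$ and maps every free $T$-orbit to itself, so it suffices to identify $\kappa|_{C(S)^0}$ with an element of $T$.

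Next I would pass to the action-angle coordinates $(x,y)\in\mathcal{C}^0\times T$ introduced above. Because $\kappa$ fixes $x=\mu$ and is $T$-equivariant, on each orbit it is a translation, so $\kappa(x,y)=(x,y+\tau(x))$ for a smooth $\tau\colon\mathcal{C}^0\to T$; it remains to show $\tau$ is constant. In the holomorphic coordinates $w_j=\partial u/\partial x_j+iy_j$ associated with the symplectic potential $u$, the map $\kappa$ reads $w\mapsto w+i\tau(x)$, with $x$ a function of $\operatorname{Re}w$ only; a holomorphic map given by a purely imaginary shift depending only on $\operatorname{Re}w$ must be a constant shift, whence $\tau\equiv c$ is constant. (Equivalently, writing $J\partial_{y_i}=-\sum_j u^{ij}\partial_{x_j}$ with $(u^{ij})$ the inverse Hessian of $u$, the identity $d\kappa\circ J=J\circ d\kappa$ evaluated on the $\partial_{y_i}$ collapses to $\sum_j u^{ij}\partial_{x_j}\tau_k=0$, and invertibility of the Hessian gives $\nabla\tau=0$.) Thus $\kappa$ agrees on the connected set $C(S)^0$ with the action of $c\in T$, and by density of $C(S)^0$ together with continuity it agrees with $c$ on all of $C(S)$. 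Therefore $f=c\circ\Phi_\alpha$, i.e.\ $\psi$ is the action of $c$ and $f$ is induced by an element of $T$.

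The crux is the holomorphicity step, and the point that makes it clean is the preliminary removal of $\Phi_\alpha$: once $\kappa$ fixes the moment coordinate $x$, the complex structure at source and target coincide and no homogeneity of the potential enters, so the holomorphicity equation reduces to $\nabla\tau=0$. The only other point requiring care is the passage from the free locus to all of $C(S)$, which is settled by density of $C(S)^0$ and the continuity of both $\kappa$ and the $T$-action.
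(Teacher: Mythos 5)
Your proof is correct and follows essentially the same route as the paper: action--angle coordinates on the free locus $C(S)^0$, $T$-equivariance to reduce the angular part to a shift $y \mapsto y+\tau(x)$, and the holomorphicity equation $d\kappa \circ J = J \circ d\kappa$ together with invertibility of the Hessian to force $\nabla\tau = 0$. The only (cosmetic) difference is your preliminary factoring-out of the radial flow $\Phi_\alpha$, which reduces to an isometry fixing $x$ and thereby avoids the degree $-1$ homogeneity of the Hessian $S(x)$ that the paper must invoke when comparing $J$ at $x$ and at $e^{-2\alpha}x$.
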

\begin{proof} In the $(x,y)$-coordinates, the K\"ahler form on
$C(S)^0$ has the form
	\begin{equation} \omega_{(x,y)} = \begin{pmatrix} 0 & -I \\ I
& 0
	\end{pmatrix}
	\end{equation} and the complex structure is
	\begin{equation} J_{(x,y)} = \begin{pmatrix} 0 & -S(x)^{-1} \\
S(x) & 0
	\end{pmatrix}
	\end{equation} where $S(x)$ is the Hessian of a smooth
function $s\colon \call C^0  \to \R$. 
degree $-1$, i.e. $S(e^{2t}x) = e^{-2t} S(x)$ for all $t \in \R$ and
$x \in \call C^0$.
	
	Being $f(t,p) = (t+\alpha, \psi(p))$, that is a translation on
the $\R$-factor and a $T$-equivariant contactomorphism on $S$, it is
clear that it satisfies $\mu \circ f = e^{-2\alpha} \mu $, so in
action-angle coordinates it has the form
	\begin{equation} f(x,y) = (e^{-2\alpha} x, f_2(x,y)).
	\end{equation}
	
	Imposing $T$-equivariance, we have the condition $f_2(x,
y+y_0) = f_2(x, y) + y_0$ and we obtain, by differentiation, that 
	\begin{equation} \label{eq:def2} \frac{\de f_2}{\de y} (x,y) =
I.
	\end{equation} Note that here we have identified $f_2 \colon
\call C^0 \times T \to T$ with its lift $f_2 \colon \call C^0 \times
\R^n \to \R^n$ and that the plus sign denotes addition in $\R^n$.
	
	Denote $A(x,y) = \frac{\de f_2}{\de x} (x,y)$. Swapping
partial derivatives in \eqref{eq:def2} we obtain that $A$ does not
depend on $y$. Hence the differential of $f$ has the form
	\begin{equation} df_{(x,y)} = \begin{pmatrix} e^{-2\alpha} I &
0 \\ A(x) & I
	\end{pmatrix}.
	\end{equation}
	
	The condition that $f$ is holomorphic, i.e. $df_{(x,y)}
J_{(x,y)} = J_{f(x,y)} df_{(x,y)}$ translates into
	\begin{align*}
 \begin{pmatrix} e^{-2\alpha} I & 0 \\ A(x) & I
	\end{pmatrix} \cdot \begin{pmatrix} 0 & -S(x)^{-1} \\ S(x) & 0
	\end{pmatrix} = \begin{pmatrix} 0 & -S(e^{-2\alpha} x)^{-1} \\
S(e^{-2\alpha} x) & 0
	\end{pmatrix} \cdot \begin{pmatrix} e^{-2\alpha} I & 0 \\ A(x)
& I
	\end{pmatrix}
	\end{align*} that implies $A=0$. Note that $df$ is a homothety
of $\omega$ with factor $e^{-2\alpha}$.
	
	Hence, on $C(S)^0$, the map $f$ has the form $f(x,y) = (e^{-2
\alpha} x, y+y_0)$ for some $y_0 \in T$. So, $f$ coincides on $C(S)^0
\cap S$ with the transformation induced by the action of the torus
element $y_0$.
\end{proof}

We can thus infer the following.

\begin{lemma} \label{lemma:equivisotopic} Any equivariant Sasakian
automorphism of a toric Sasakian manifold is equivariantly isotopic to
the identity.
\end{lemma}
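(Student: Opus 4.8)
The plan is to reduce the statement to Lemma \ref{lemma:lemmaT} and then exploit the connectedness and commutativity of the acting torus. First I would observe that any $T$-equivariant Sasakian automorphism $\psi$ of $S$ extends to the K\"ahler cone $C(S)=\R\times S$ as the map $(t,p)\mapsto(t,\psi(p))$, and that this extension is a $T$-equivariant holomorphic isometry: since $\psi$ preserves the contact form $\eta$, the Reeb field $R$ and the transverse CR structure, its cone extension preserves $J\partial_t=R$ together with the metric $e^t(dt^2+g_S)$, so it is holomorphic and is a homothety with trivial factor. This is precisely the case $\alpha=0$ in Lemma \ref{lemma:lemmaT}.

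Applying Lemma \ref{lemma:lemmaT} with $\alpha=0$, I would conclude that this extension, and hence $\psi$ itself, is induced by a single element $y_0\in T$: in the action-angle coordinates of that lemma the map reads $(x,y)\mapsto(x,y+y_0)$, which is exactly the action of $y_0$ on the second factor. Thus $\psi$ coincides with the diffeomorphism of $S$ given by the torus element $y_0$.

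Next I would use that $T$ is a connected abelian Lie group, so the exponential $\exp\colon\lie t\to T$ is surjective. Choosing $\xi\in\lie t$ with $\exp(\xi)=y_0$, I would define $\psi_s$ to be the action of $\exp(s\xi)$ on $S$, for $s\in[0,1]$, so that $\psi_0=\mathrm{id}_S$ and $\psi_1=\psi$. Each $\psi_s$ is a Sasakian automorphism because all of $T$ acts by Sasakian automorphisms, and each $\psi_s$ is $T$-equivariant because $T$ is abelian, so $\exp(s\xi)$ commutes with every element of $T$. Hence $\{\psi_s\}_{s\in[0,1]}$ is the desired equivariant isotopy from the identity to $\psi$, through equivariant Sasakian automorphisms.

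The argument is essentially immediate once Lemma \ref{lemma:lemmaT} is available; the only point requiring care is the extraction of the statement ``$\psi$ is induced by $y_0\in T$'' from that lemma, namely using the normalization $\alpha=0$ and reading off the translation in the $y$-variable as the torus action. The equivariance of the isotopy, which would be the genuine obstacle in a non-abelian setting, is automatic here precisely because $T$ is a torus; this is what makes the untwisting of the mapping torus in the subsequent structure theorem possible.
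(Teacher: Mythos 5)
Your proof is correct and follows essentially the same route as the paper: extend $\psi$ to the K\"ahler cone, invoke Lemma \ref{lemma:lemmaT} to identify $\psi$ with the action of an element $y_0\in T$, and connect $y_0$ to the identity by a path in the (connected, abelian) torus. Your added care in verifying that the cone extension is a $T$-equivariant holomorphic homothety (with factor $1$) is a welcome explicit check of the hypotheses of Lemma \ref{lemma:lemmaT}, which the paper leaves implicit.
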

\begin{proof} Given such an automorphism $\psi$, extend it to a
homothety $f(t,p) = (t+\alpha, \psi(p))$ on the K\"ahler cone.
	
	By Lemma \ref{lemma:lemmaT}, $\psi$ is defined by the action
of an element $\sigma_\psi \in T$. Then the map $\psi_t (x) =
\sigma(t)$ is a $T$-equivariant isotopy between $\psi$ and the
identity, where $\sigma(t)$ is any smooth curve joining $\sigma$ and
$1$ in $T$.
\end{proof}

\begin{thm} \label{thm:prod}Let $M$ be a compact toric Vaisman
manifold. Then the following hold.
\begin{enumerate}
\item \label{mappingtorus} $M$ is isomorphic to the mapping torus of a
toric Sasakian manifold $S$ by an element $g\in T$: it is $M =
S_{\alpha,g}:=  \R \times S/(t,p)\sim (t+\alpha,gp)$ with the natural
Vaisman structure.

\item \label{diffeoprod} $M$ is diffeomorphic, as a smooth manifold,
to the direct product of a toric Sasakian manifold and a circle.

\item \label{uniqueness} Two toric Vaisman manifolds $S_{\alpha,g}$
and $W_{\beta,h}$ are $T$-equivariantly isomorphic as Vaisman
manifolds if and only if $S=W$, $\alpha = \beta$ and $g=h$.
\end{enumerate}
\end{thm}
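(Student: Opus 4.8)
The plan is to derive all three statements from the Ma--Moroianu--Pilca structure result together with the two lemmas just proved. By \cite[Prop.\ 5.4]{MaMoPi}, a compact toric Vaisman manifold $M$ is the mapping torus of its associated (compact, toric) Sasakian manifold $S$ by a $T$-equivariant Sasakian automorphism $\psi$, the $\Z$-action on the K\"ahler cone $C(S)=\R\times S$ being generated by $f(t,p)=(t+\alpha,\psi(p))$. The key observation for (\ref{mappingtorus}) is that $f$ is precisely the generator of the deck group of the minimal presentation, hence acts on $C(S)$ by holomorphic homotheties; moreover, since the $T$-action on $M$ lifts to $C(S)$ commuting with the deck group (Lemma \ref{liftG}), $f$ is $T$-equivariant. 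Thus Lemma \ref{lemma:lemmaT} applies and shows that $f$, and therefore $\psi$, is induced by an element $g\in T$. This gives $M=S_{\alpha,g}$ and proves (\ref{mappingtorus}).

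For (\ref{diffeoprod}), I would use that $\psi$ is the Sasakian automorphism induced by $g\in T$, so by Lemma \ref{lemma:equivisotopic} it is $T$-equivariantly isotopic to the identity (through torus elements). I would then invoke the standard fact that the mapping torus of a diffeomorphism isotopic to the identity is diffeomorphic to the trivial one: an isotopy $\psi_s$ with $\psi_0=\mathrm{id}$, $\psi_1=\psi$ yields a fibre-preserving diffeomorphism of $\R\times S$ intertwining the $\Z$-action generated by $f$ with the one generated by $(t,p)\mapsto(t+\alpha,p)$, whose quotient is $S\times S^1$. Hence $M$ is diffeomorphic, as a smooth manifold, to $S\times S^1$.

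For the uniqueness statement (\ref{uniqueness}), the ``if'' direction is immediate from the explicit construction. For ``only if'', suppose $\Phi\colon S_{\alpha,g}\to W_{\beta,h}$ is a $T$-equivariant isomorphism of Vaisman manifolds. After the unit-length normalization of the Lee form, the Vaisman metric is determined within its conformal class, so $\Phi$ is an isometry and lifts to a $T$-equivariant holomorphic isometry $\hat\Phi\colon\R\times S\to\R\times W$ of the minimal covers which conjugates the deck generator $f_S$ to $f_W^{\pm 1}$. Because the Lee form pulls back to $dt$ on each cover and is preserved by $\hat\Phi$, we have $\hat\Phi^*dt=dt$, so $\hat\Phi$ preserves the $\R$-coordinate up to a constant and respects its orientation; this forces $\hat\Phi f_S\hat\Phi^{-1}=f_W$ and, comparing $t$-shifts, $\alpha=\beta$. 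Restricting $\hat\Phi$ to a level set of $t$ gives a $T$-equivariant Sasakian isomorphism $\phi\colon S\to W$, so $S\cong W$. Finally, the conjugation relation restricted to the $S$-factor reads $\phi\circ g=h\circ\phi$, and since $\phi$ is $T$-equivariant and $g,h\in T$, this yields $h=\phi g\phi^{-1}=g$ by effectiveness of the $T$-action on $W$.

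The routine parts are (\ref{mappingtorus}) and (\ref{diffeoprod}), which are near-immediate given Lemmas \ref{lemma:lemmaT} and \ref{lemma:equivisotopic}. The main obstacle is the bookkeeping in (\ref{uniqueness}): one must check carefully that a $T$-equivariant Vaisman isomorphism lifts to the minimal covers as an isometry conjugating the two $\Z$-actions \emph{in the correct direction}, and then extract $g=h$ from the conjugation relation using $T$-equivariance. The subtle point is justifying that the lift preserves $dt$ (hence $\alpha=\beta$ and the orientation of the deck action), which rests on the unit-length normalization making $\Phi$ an isometry that preserves the parallel Lee field.
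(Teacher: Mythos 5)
Your proposal is correct and follows essentially the same route as the paper: parts (\ref{mappingtorus}) and (\ref{diffeoprod}) are exactly the paper's combination of \cite[Prop.~5.4]{MaMoPi} with Lemmas \ref{lemma:lemmaT} and \ref{lemma:equivisotopic}, and part (\ref{uniqueness}) likewise lifts the isomorphism to the K\"ahler cones, matches the deck generators, and extracts $g=h$ from commutativity of $T$. The only cosmetic difference is that in (\ref{uniqueness}) the paper invokes Lemma \ref{lemma:lemmaT} once more to identify the lifted isomorphism as induced by a torus element $\sigma$ and then notes that conjugation in the Abelian group $T$ is trivial, whereas you deduce $g=h$ directly from the $T$-equivariance of the restricted map; both arguments are valid.
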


\begin{proof} The first statement is \cite[Prop.\ 5.4]{MaMoPi},
together with Lemma~\ref{lemma:lemmaT}. The second one is Lemma
\ref{lemma:equivisotopic}. For the last one, observe first that an
isomorphism of Vaisman manifolds induces an isomorphism of the
associated Sasakian manifolds. The number $\alpha$ is just the length
of the flow of the Lee vector field, so in particular an invariant of
the Vaisman structure. Now, considering two Vaisman manifolds
$S_{\alpha,g}$ and $S_{\alpha,h}$, any equivariant isomorphism $f$
lifts to an equivariant holomorphic homothety of the associated toric
K\"ahler cone $C(S)$. Then, by Lemma \ref{lemma:lemmaT}, there exists
a $\sigma \in T$ such that $F$ is induced by $(t,p) \mapsto (t+\gamma,
\sigma(p))$, for some $\gamma \in \R$. The K\"ahler cone
transformations defined by $g$ and $h$ are conjugated under $F$, so in
$T$, the elements $g$ and $h$ are conjugated under $\sigma$. But, $T$
being Abelian, it follows that $g=h$. 
\end{proof}

\begin{rem} \label{rmk:modulispaceToricVaisman} It is well-known, see
e.g.\ \cite{FOW} or \cite{martelli_sparks_yau}, that the choice of a
strongly convex polyhedral cone $\call C$ together with a point $\xi$
in its interior $\call C^0$ define a toric Sasakian manifold and that
there is no uniqueness in this construction. All toric Sasakian
manifolds with given Reeb field and moment cone are parameterized by
$\call C_0^* \times \call H(1)$, where $C_0^*$ is the interior of the
dual cone $\call C^*$ and $\call H(1)$ is the space of smooth
homogeneous degree one functions on $\call C$, such that their Hessian
is strictly convex. Then the theorem shows that the collection of
toric Vaisman manifolds, up to $T$-equivariant Vaisman isomorphism,
with moment image $\call C \cap \chi_\xi$ is in a one-to-one
correspondence with $\call C_0^* \times \call H(1)\times T$.
\end{rem}

\begin{rem} \label{rmk:topologytoricVaisman} Part~\eqref{diffeoprod}
of Theorem~\ref{thm:prod} can give information about the topology of a
compact toric Vaisman manifold.

Combining Theorem \ref{thm:prod} with results about
the topology of toric contact manifolds (see e.g. \cite{monoBG,
LermanHomotopy}) we can easily obtain information about the topology
of a toric Vaisman manifold $M$. In particular, we reobtain the fact
that $b_1(M)=1$, already proven in \cite{MaMoPi}, and one can compute
the first and second homotopy group of $M$ in terms of the
combinatorial data of the moment cone. Moreover, in \cite{LiFundGroup}
it is proved that the fundamental group of a toric contact manifold of
Reeb type is finite cyclic of order, say, $k$ and it is explained how
to read $k$ from the moment data.
\end{rem}



\section{Examples} \label{sec:examples} In this section we provide
examples of twisted Hamiltonian actions on various lcs and lcK
structures. We begin with the most standard example of a Vaisman
manifold.

 \begin{example}[The weighted Hopf manifold] \label{ex:weightedHopf}
Consider the action on $\C^n \setminus \{ 0\}$ given by
\[ \gamma(z) = (a_1 z_1, \ldots, a_n z_n),
\]	 for some complex numbers $a_j$ such that $0 < |a_j| < 1$ and
let $\Gamma$ be the subgroup of holomorphic isometries of  $\C^n
\setminus \{ 0\}$ generated by $\gamma$.

In \cite[Thm.~B]{KamOrnea}, it is proved that  $M=(\C^n \setminus \{
0\})/\Gamma$ carries a Vaisman structure, whose associated Sasakian
manifold is the weighted Sasakian sphere $S^{2n-1}_w$ with weights
$w_j = \log |a_j|$, Reeb field
\begin{equation}
\label{weightedreeb} \xi_w = \sum_{j=1}^n w_j \biggl( x_j
\frac{\de}{\de y_j} - y_j \frac{\de}{\de x_j} \biggr)
\end{equation} and contact form
\begin{equation}
\label{weightedeta} \eta_w = \frac{1}{\sum_j w_j |z_j|^2} \eta_0,
\end{equation} where $\eta_0$ is the standard contact form on
$S^{2n-1}$.

On both $M$ and $S^{2n-1}_w$, the standard torus $T^n$ is taken with
its standard action. The image of the twisted moment map of $M$ is, by
Theorem \ref{thm:img}, the same as the contact moment map, that is
given by the intersection of the first octant $\{ x \in \R^n\mid x_j >
0\}$ with the characteristic hyperplane $\{ x\mid \langle x, w \rangle
=1 \}$.

The construction of \cite{KamOrnea} generalizes the one of
\cite{gaudornea} for $n=2$. Moreover, in \cite{MaMoPi}, it is proven
that this weighted Hopf manifold, for $n=2$, is the only toric Vaisman surface. 
\end{example}

\begin{example} \label{ex:momentimageconfchange} Changing conformally
the form $\omega$ leads to a corresponding change of the twisted
moment map, see Remark \ref{r2.10}. We give here an example where the
process leads to a twisted moment map with non-convex image (the Lee
type condition is lost as well):  
We start with
the standard Vaisman structure $(g, J)$ on a diagonal Hopf surface
$M$, i.e., we let $n=2$ and $a_1=a_2$ in Example \ref{ex:weightedHopf}. Let $\mu \colon M \to \R^2$ be the twisted moment map of the
standard $\T^2$-action which is of Lee type, given by
\[ \mu(z) = \frac{1}{\|z\|^2} (|z_1|^2, |z_2|^2).
\]

Let $f \colon M \to \R$ be a smooth function of the variable
$t=|z_1|^2/\|z\|^2$. The new lcK structure $(e^{-f}g, J)$ is then
still lcK and acted upon by the same two-dimensional torus, with
twisted moment map $\tilde \mu := e^{-f} \mu \colon M \to \R^2$. The
action on the new structure is not of Lee type.

The image of $\tilde \mu$ is
\[ e^{-f} \mu(M) = \{ e^{-f(t)}(t,1-t)\mid t \geq 0 \}
\] and we note that it is a convex subset of the plane if and only if
it is a piece of a straight line, which happens exactly when $f$ is
constant.
\end{example}

\begin{example}\label{ex:istrati} The following example was given in
\cite[Ex.~6.4]{istrati_toric}, as a toric lcs manifold that does not
admit a toric lcK metric. Let the standard $2$-torus $T=\T^2$ act on
the compact manifold $M = (S^1)^4$ by multiplication on the first two
factors
\[ (e^{it_1}, e^{it_2}) \cdot (e^{i \theta_1}, e^{i \theta_2},e^{i
\theta_3},e^{i \theta_4}) = (e^{i (t_1+\theta_1)}, e^{i(t_2+
\theta_2)},e^{i \theta_3},e^{i \theta_4}).
\] Let $d \theta_i$ be the volume form on the $i$-th $S^1$-factor of
$M$ and consider the forms
\begin{align*} \theta 	&:= d\theta_4 \\ \eta 	&:= \sin \theta_3
d\theta_1 + \cos \theta_3 d \theta_2
\end{align*} on $M$. The form $\omega := d_\theta \eta$ is a
$T$-invariant lcs form on $M$ and the form $-\eta$ defines a twisted
moment map $\mu \colon M \to \lie t^*$ of the $T$-action.

For $Y = (y_1, y_2) \in \R^2 \cong \lie t$, the induced field at $x =
(e^{i \theta_1}, e^{i \theta_2},e^{i \theta_3},e^{i \theta_4})$ is
\begin{equation} \label{eq:IstratiInducedField} \bar Y_x = y_1
\frac{\de}{\de \theta_1} + y_2 \frac{\de}{\de \theta_2}
\end{equation} hence $\eta(\bar Y_x) = -y_1 \sin \theta_3 - y_2 \cos
\theta_3$, and
\[ \mu^Y(x) = - \eta( \bar Y_x) = y_1 \sin \theta_3 + y_2 \cos
\theta_3.
\] The image of the twisted moment map $M \to \lie t^* \cong \R^2$
thus is a (non-convex) circle in the plane.

This does not contradict Theorem \ref{thm:convmu} because the action
is not of Lee type: the {\em s}-Lee vector field of the lcs structure is
$V = \sin \theta_3 \frac{\de}{\de \theta_1} + \cos \theta_3
\frac{\de}{\de \theta_2}$ and we can see it cannot be induced by any
field of the form~\eqref{eq:IstratiInducedField}.

Note that in this example is not possible to perform a conformal
change in order to make the moment image convex, cf.\ Example
\ref{ex:momentimageconfchange}. Note also that the minimal covering of
$M$ is $\hat M = (S^1)^3\times \R$. The moment map of the lifted
$T$-action is $\hat \mu = e^{-\theta_4}\mu:\hat M \to \lie t^*$ and
has nonconvex image $\lie t^*\setminus \{0\}$.
\end{example}

We conclude with a lcK non-Vaisman example that satisfies the
assumptions of our Theorem~\ref{thm:convmu}.

\begin{example} Let $(\omega, \theta)$ be a Vaisman structure on a
complex manifold $(M, J)$. In \cite[Section~3]{Mor2Ornea}, the authors
start with a Vaisman structure with Lee field $V$ and deform it, in a
non-conformal way, to a non-Vaisman lcK structure with the same Lee
field. They use a non-constant function $f \colon M \to \R$ with $f >
-1$ and gradient collinear with $V$.

In detail, they take the new lcK form $\bar \omega = \omega + f \theta
\wedge J\theta$, with Lee form $\bar \theta = (1+f) \theta$. We claim
that if $(M, J, \omega, \theta)$ admits a holomorphic twisted
Hamiltonian action of Lee type of a Lie group $G$, then the same
action is still twisted Hamiltonian with respect to the new lcK
structure, with the same moment map.

Let $X \in \lie g$. The twisted moment map of the $G$-action, with
respect to the Vaisman structure, is (see \eqref{eq:momentVaisman})
given by $\mu^X = \iota_X J \theta = -\theta(J\bar X)$, and we note
that $\bar X \in \ker \theta$, by \cite[Lemma~4.2]{Pilca}. We compute
\begin{align*} \iota_X \bar \omega &= \iota_X \omega + f \iota_X
(\theta \wedge J \theta) \\ &= \iota_X \omega - f \theta \wedge
\iota_X J \theta \\ &= d_\theta \mu^X - f \mu^X  \theta\\ &= d \mu^X -
\mu^X \theta - f \mu^X  \theta\\ &= d \mu^X  - (1+f) \mu^X \theta \\
&= d_{\bar \theta} \mu^X.
\end{align*}

The authors give an explicit example of such a function $f$ on the
complex $n$-dimensional Hopf manifold. In this way they obtain a toric
non-Vaisman lcK structure on the Hopf manifold. Its moment map is the
same as in the Vaisman case, see Example \ref{ex:weightedHopf}.
\end{example} \bibliography{../../allbib/allbib}
\bibliographystyle{amsplain}
\end{document}